\newif\ifKfour 
\newtheorem{theorem}{Theorem}[section]
\newtheorem{lemma}[theorem]{Lemma}
\newtheorem{obs}[theorem]{Observation}
\newtheorem{corollary}[theorem]{Corollary}
\newtheorem{proposition}[theorem]{Proposition}
\newtheorem{question}[theorem]{Question}
\theoremstyle{remark}
\theoremstyle{definition}
\newcommand\F{\mathcal{F}}
\newcommand\G{\mathcal{G}}
\newcommand\D{\mathcal{D}}
\newcommand\bb{\mathbf{b}}
\newcommand\cc{\mathbf{c}}
\newcommand\CC{\mathbf{C}}
\newcommand\K{\mathbb{K}}
\newcommand{\du}{\sqcup} 
\newcommand{\red}[1]{{\color{red}\bf #1}}
\newcommand{\blue}[1]{{\color{black}#1}} 
\DeclareMathOperator{\lk}{lk}
\DeclareMathOperator{\cl}{cl}
\DeclareMathOperator{\ico}{ico}
\DeclareMathOperator{\sign}{sign}
\title{On Betti numbers of flag complexes with forbidden
induced subgraphs}
\author{Karim Adiprasito\thanks{
Einstein Institute of Mathematics,
The Hebrew University of Jerusalem, Jerusalem, 91904 Israel. email: \url{adiprasito@math.huji.ac.il}.
Supported by ERC-2016-STG 716424 - CASe, NSF Grant DMS 1128155, and Israel
Science Foundation grant 1050/16.} \and
Eran Nevo\thanks{
Einstein Institute of Mathematics,
The Hebrew University of Jerusalem, Jerusalem, 91904 Israel. email: \url{nevo.eran@gmail.com}.
Partially supported by Israel Science Foundation grants ISF-805/11,
ISF-1695/15, by grant 2528/16 of the ISF-NRF Singapore joint research program, and
by ISF-BSF joint grant 2016288.
} \and Martin Tancer\thanks{Department of
Applied Mathematics, Charles University in Prague, Malostransk\'{e}
n\'{a}m\v{e}st\'{\i} 25, 118 00, Praha 1. email: \url{tancer@kam.mff.cuni.cz}.
Partially supported by the GA\v{C}R grant
16-01602Y and by Charles University project UNCE/SCI/004. Part of this work was done when M.~T. was affiliated with IST
Austria.}}
\date{\today}
\begin{document}
\maketitle
\begin{abstract}
We analyze the asymptotic extremal growth rate of the Betti numbers of clique complexes of graphs on $n$ vertices not containing a fixed forbidden induced subgraph $H$.

In particular, we prove a theorem of the alternative: for any $H$ the growth rate achieves exactly one of five possible exponentials, that is, independent of the field of coefficients, the $n$th root of the maximal total Betti number over $n$-vertex graphs with no induced copy of $H$ has a limit, as $n$ tends to infinity, and, ranging over all $H$, exactly five different limits are attained.

For the interesting case where $H$ is the $4$-cycle, the above limit is $1$, and we prove a superpolynomial upper bound.
\end{abstract}

\section{Introduction}
A central subject of extremal graph theory concerns {monotone} family of graphs
without a fixed subgraph, and its extremal properties -- starting with
Tur\'{a}n's theorem and the Erd\H{o}s-Stone theorem, \blue{on the maximal number of
edges in a graph not containing a fixed complete graph or complete multipartite graph
respectively -- as well as}
further generalizations
and refinements; \blue{see, e.g., \cite{Diestel}}.


The non-monotone family of
graphs $G$ without fixed \emph{induced} subgraphs have also been the subject of
extensive research \cite{CS}; for structure (e.g., perfect graphs, chordal graphs, coloring, \cite{K1}),
enumeration (e.g., \cite{Promel-Steger}), as well as extremal properties (e.g., Ramsey theory, Erd\H{o}s-Hajnal conjecture~\cite{Erdos-Hajnal,Chudnovsky--ErdosHajnal}).

Following Gromov and
subsequent work of Davis, Januszkiewicz and \'{S}wi\c{a}tkowski, the Betti
numbers of clique complexes without small induced cycles are central to the
study of nonpositive curvature in certain groups and manifolds; \blue{see
  \cite{MR2264834} and references therein. Januszkiewicz and
  \'{S}wi\c{a}tkowski~\cite{JS, MR2264834} used this connection to construct
  hyperbolic Coxeter groups of large cohomological dimension,
  which were long conjectured to be nonexistent by Bestvina, Gromov, Moussong
  and others, by constructing clique complexes without induced $4$-cycles and
  with high-dimensional cohomology. Simplifying and expanding on these constructions
  has since been an active topic, see also \cite{Osajda} for recent
developments.}

\blue{Here we focus on this fundamental problem from a different point of view.
First, we wish to understand the problem from a more quantitative perspective,
and understand how the topological complexity is in interplay with the size of
the complex as well as the forbidden substructure. Second, we wish to unify
perspectives of graph theory and geometric topology by studying not only the
case of clique complexes with forbidden cycles, but more general induced
subgraphs.}

\begin{question}
For any simple finite graph $H$,
what is the maximal total Betti number over all clique complexes $\cl(G)$ of
graphs $G$ with at most $n$ vertices and without an induced copy of $H$?
\end{question}
Let $\K$ be any field, $H$ be any simple finite graph, and
$$b_H(n) =
  b_H(n,\K)=\max_G\bigg\{\sum_{i\geq -1}\dim_\K
  \widetilde{H}_i(\cl(G);\K)\bigg\}$$
  where $G$ runs over all simple graph on at most $n$ vertices without an
  induced copy of $H$, and $\widetilde{H}_i(\cdot;\K)$ denotes the $i$th
  reduced homology with coefficients over $\K$. Note that $b_H(0)=1$ for any $H$, where $G=\emptyset$ is the only graph in the above $\max_G$.
We are interested in the growth of $b_H(n)$ as $n$ tends to infinity. The results turn out to be, quite interestingly, independent of the coefficient field.

Adamaszek~\cite{Adamaszek:Betti} showed that $b(n) \leq 4^{n/5}$, for
$$b(n)=\max_G\{\sum_{i\geq -1}\dim_\K \widetilde{H}_i(\cl(G);\K)\}$$
where $G$ runs over \emph{all} graphs on at most $n$ vertices.
Moreover the maximum is attained
by the complete multipartite graph $K_{5,5,\dots, 5}$ when n is divisible by $5$; we deduce that $\lim_{n\rightarrow \infty}\sqrt[n]{b(n)}=4^{1/5}$.

Therefore, if $H$ is not an induced subgraph of the infinite complete
multipartite graph $K_{5,5,\dots}$, then $b_H(n)$ may grow as quickly as
$(4^{1/5})^n$ (and again $\lim_{n\rightarrow \infty}\sqrt[n]{b_{H}(n)}=4^{1/5}$).

Thus, it is only interesting to study the function $b_H(n)$ for induced
subgraphs $H$ of $K_{5,5,\dots}$. The (finite) induced subgraphs of
$K_{5,5\dots}$ are exactly the complete multipartite graphs $K_{i_1,i_2, \dots,
i_m}$ where (without loss of generality) $5 \geq i_1 \geq \cdots \geq i_m \geq
1$. If $m=1$, then we get the independent set $I_{i_1}$ on $i_1$ vertices (it
should not be confused with $K_{i_1} = K_{1,\dots,1}$, the complete graph on
$i_1$ vertices, which is also an induced subgraph of $K_{5,5,\dots}$).

Adamaszek further showed that for $H=I_3$, the growth is exponential but
with a smaller base, at most $\approx 1.2499 < 4^{1/5} \approx 1.3195$.
It is also obvious that, if $H=K_d$ is a complete graph on $d$ vertices, then $\cl(G)$ is at most $(d-2)$-dimensional, and thus $b_{K_d}(n)=O(n^{d-1})$.

We will prove that the limit $\lim_{n\rightarrow
\infty}\sqrt[n]{b_{H}(n)}$ exists for any $H$ and we denote this
limit by $c_H$. Most strikingly, we will prove a theorem of the alternative: $c_H$, depending on $H$, can attain one of only 5 different values:

\begin{theorem}
\label{t:limit}
Let $H$ be any graph. The limit $c_H = \lim_{n\rightarrow
\infty}\sqrt[n]{b_{H}(n)}$ exists. In addition:
\begin{enumerate}[(i)]
\item
  If $H$ \blue{is not an induced subgraph of} $K_{5,5\dots}$, then $c_H = 4^{1/5} \approx 1.3195$.
\item
 For every $i \in \{1,\dots,5\}$ there is a value $c'_{i}$ with the following
 property. If $H = K_{i_1,\dots,i_m}$ with $5 \geq i_1 \geq \cdots \geq i_m\geq 1$, then
 $c_H = c'_{i_1}$. Moreover, $c'_5 = 3^{1/4} \approx 1.3161$, $c'_4 = 2^{1/3} \approx 1.2599$, $c'_3 \in [8^{1/14}, \Gamma_4] \approx
 [1.1601, 1.2434]$, and $c'_2 = c'_1 = 1$.

 Here $\Gamma_4$ is a certain constant which is precisely defined 
 in the Preliminaries.
\end{enumerate}
\end{theorem}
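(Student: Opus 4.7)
My plan is to divide the proof into several cases based on $H$. Case~(i) follows immediately from results already cited: Adamaszek's universal bound gives $b_H(n) \leq b(n) \leq 4^{n/5}$ for every $H$, while if $H \not\subseteq K_{5,5,\ldots}$ the extremal multipartite graphs $K_{5,5,\ldots,5}$ are themselves $H$-free, so $b_H(n) \geq b(n)$ and both $\liminf$ and $\limsup$ equal $4^{1/5}$. This also proves existence of the limit in case~(i). Henceforth we may assume $H = K_{i_1,\ldots,i_m}$ with $5 \geq i_1 \geq \cdots \geq i_m \geq 1$.

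For the lower bounds in case~(ii), the workhorse is the observation that the complete multipartite graph with all parts of size exactly $i_1-1$ on $n$ vertices is $H$-free: in any induced complete multipartite subgraph each independent part is contained in a single ambient part (vertices in distinct ambient parts are adjacent), so every part has size at most $i_1-1$, precluding a part of size $i_1$. Its clique complex is the simplicial join of $\lfloor n/(i_1-1)\rfloor$ discrete sets of $i_1-1$ points, and the K\"unneth formula for joins (valid for reduced homology over any field) yields total Betti number $(i_1-2)^{\lfloor n/(i_1-1)\rfloor}$. This gives $c'_5\geq 3^{1/4}$ and $c'_4\geq 2^{1/3}$. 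For $i_1=3$ this construction is trivial, so the bound $c'_3 \geq 8^{1/14}$ will require an explicit $14$-vertex gadget whose clique complex realizes total Betti number $8$ while still being $K_{3,\ldots}$-free; one then iterates by joining many copies.

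For the upper bounds, my plan is an inductive vertex-removal recurrence coming from Mayer--Vietoris. For any vertex $v$, since the star of $v$ is a cone and meets $\cl(G-v)$ in $\lk(v)=\cl(G[N(v)])$, the associated long exact sequence yields
\[
 b(\cl(G))\;\leq\;b(\cl(G-v))\;+\;b(\cl(G[N(v)])),
\]
and since both $G-v$ and $G[N(v)]$ remain $H$-free, one gets $b_H(n)\leq b_H(n-1)+b_H(d_G(v))$. The plan is to combine this with a structural lemma locating a vertex of controlled degree in any $K_{i_1,\ldots,i_m}$-free graph, and close the recurrence at the right exponential rate. For $i_1\in\{4,5\}$ the target rate is set by the multipartite lower bound; for $i_1\in\{1,2\}$ one needs sublinear minimum degree (perhaps after a preliminary reduction), which forces $c_H=1$ and should also yield the slightly-superpolynomial bound advertised for $H=C_4=K_{2,2}$. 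Matching upper and lower exponential rates establishes the existence of the limit by sandwiching.

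The hardest step, clearly, is the case $i_1=3$: the multipartite construction collapses and avoiding induced $K_{3,\ldots}$ does not impose transparent local structure, which is exactly why the paper records only an interval $[8^{1/14},\Gamma_4]$ rather than a single value. A refined Mayer--Vietoris recurrence weighted by local neighborhood type (rather than by degree alone) will likely be needed for the upper bound $\Gamma_4$. A secondary technical difficulty is proving existence of the limit uniformly in $H$: outside the regimes where matching asymptotic bounds do the job directly, one would like a Fekete-style supermultiplicativity $b_H(n_1+n_2)\geq b_H(n_1)\,b_H(n_2)$, but because the join of two $H$-free graphs can easily fail to be $H$-free (an induced $K_{i_1,\ldots,i_m}$ may split its parts across the two factors), any such argument must restrict to a carefully engineered ``safe join'' adapted to the multipartite structure of $H$.
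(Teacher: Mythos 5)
Your case (i) and the multipartite lower-bound constructions for $i_1\in\{4,5\}$ match the paper and are correct: $K_{i_1-1,\dots,i_1-1}$ is indeed $H$-free and its clique complex is a join of point sets with total Betti number $(i_1-2)^{n/(i_1-1)}$. But the two load-bearing parts of the theorem are not established by your plan. First, the \emph{existence of the limit}. You correctly observe that supermultiplicativity under joins fails when $H$ itself is a join, but you then look for a ``safe join'' to rescue the lower-bound direction --- and that is not where the difficulty lies. The easy direction is $b_H(n)\geq b_{I_{i_1}}(n)$ (any $I_{i_1}$-free graph is $H$-free), and Fekete applies cleanly to $H=I_{i_1}$ because the join of two $I_{i_1}$-free graphs is $I_{i_1}$-free. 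What is genuinely needed, and what the paper proves (Proposition~\ref{prop:limsup}), is the reverse \emph{upper} bound $\limsup_n\sqrt[n]{b_{H_1*H_2}(n)}\leq\max(c_{H_1},c_{H_2})$; this requires a real argument (a three-way case analysis: either $G$ is $H_2$-free, or deleting few vertices makes it $H_1$-free, or $G$ contains a high-degree vertex to which the Mayer--Vietoris recurrence can be profitably applied). Without this, your ``sandwich'' only works where upper and lower exponential rates coincide, and it therefore cannot even establish that $c_H$ exists when $i_1=3$, where the theorem asserts existence despite the value being unknown.

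Second, the upper bounds $c'_5\leq 3^{1/4}$ and $c'_4\leq 2^{1/3}$. The single-application recurrence $b(\cl(G))\leq b(\cl(G-v))+b(\cl(G[N(v)]))$ cannot close at these rates: writing $d=\deg_G(v)$, closing at rate $c$ requires $c^{-1}+c^{d-n}\leq 1$, i.e.\ $n-d\geq\log_c\bigl(c/(c-1)\bigr)$, which for $c=2^{1/3}$ demands a vertex of degree at least $n-7$ in \emph{every} $H$-free graph --- false. The paper instead works in the complementary setting and uses the \emph{iterated} recurrence $\bb(G)\leq\sum_{i=1}^{d}\bb(G-N[v_i]-\{v_1,\dots,v_{i-1}\})$ over all neighbors of a minimum-degree vertex (Lemma~\ref{l:recurrent_simp}), combined with case analysis on the minimum degree and on missing edges in the neighborhood; for $c'_4=2^{1/3}$ this grows into the entire minimal-counterexample analysis of Appendix~\ref{a:K4} (degrees $3,4,5$, classification of neighborhoods, triangular cycles, the icosahedron). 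Your ``structural lemma locating a vertex of controlled degree'' is a placeholder for all of this, as are the $8^{1/14}$ gadget and the $\Gamma_4$ bound for $i_1=3$, which you acknowledge. As written, the proposal proves case (i) and the lower bounds for $i_1\in\{4,5\}$ and nothing more.
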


We summarize our results (including Adamaszek's bounds) in Table~\ref{tab:growth}.

\begin{table}[h!tb]
\small
  \begin{center}
  \begin{tabular}{lllll}
    & $H$ & $c_H$ & lower bound& upper bound \\
\toprule
& $H \not\leq K_{5,5,\dots}$ & $4^{1/5} \approx 1.3195$ & $4^{n/5}$ & $4^{n/5}$\\
\midrule
\multirow{2}{*}{$i_1 = 5$} & $I_5$ & \multirow{2}{*}{$3^{1/4} \approx 1.3161$} & $3^{n/4}$ & $3^{n/4}$\\
& $K_{5,\dots,5}$ ($m$ parts) & & $(4\cdot 3^{-5/4})^{m-1} 3^{n/4}$ & $(4\cdot 3^{-5/4})^{m-1} 3^{n/4}$\\
\midrule
$i_1=4$ &
$I_4$ & $2^{1/3} \approx 1.2599$ &
$2^{n/3}$
&$2^{n/3}$
\\
\midrule
\multirow{2}{*}{$i_1=3$} &
\multirow{2}{*}{$I_3$} & $\in [8^{1/14}, \Gamma_4] \approx$ &
\multirow{2}{*}{$8^{n/14}$}
& \multirow{2}{*}{$\Gamma_4^n$}\\
      & & $[1.1601, 1.2434]$ & & \\
\midrule
\multirow{4}{*}{$i_1 \leq 2$} &
$K_{2,2} = C_4$ & \multirow{4}{*}{$1$} & $\Omega(n^{3/2})$ &
\vphantom{$\int^{\binom{m}{n}}$} \blue{$n^{O({\log n})}$} \\
& $K_{2, 1, \dots, 1}$ ($m$ parts) & & $\Theta(n^{m-1})$ & $\Theta(n^{m-1}$) \\
& $K_{1, 1, \dots, 1}$ ($m$ parts) & & $\Theta(n^{m-1})$ & $\Theta(n^{m-1}$) \\
\bottomrule
\end{tabular}
\caption{The value $c_H$ and the upper and lower bounds on $b_H(n)$ for
interesting graphs $H$. The lower bounds are valid for infinitely many values
of $n$.}
\label{tab:growth}
\end{center}
\end{table}






Now, let us assume that $H = K_{i_1,\dots,i_m}$ is an induced subgraph of
$K_{5,5\dots}$ with $5 \geq i_1 \geq \dots \geq i_m$. Theorem~\ref{t:limit} shows that for $i_1 \in \{3,4,5\}$, the function $b_H(n)$ grows exponentially.
Let $H\leq G$ denote that $H$ is an induced subgraph of $G$.
The following theorem gives more refined bounds for any $I_5\leq H\leq K_{5,\dots,5}$.



\begin{theorem}\label{thm:I_3<H}
    If $H = K_{5,\dots,5}$ is $m$-partite, $m\geq 1$, then
    $$b_H(n)\leq  \left(\frac{4}{3^{5/4}}\right)^{m-1}\cdot 3^{n/4} \approx
    1.0131^{m-1} 1.3161^n.$$
    This bound is tight if $n - 5(m-1)$ is divisible by $4$ and positive, and is attained by the $(m-1)+\frac{n - 5(m-1)}{4}$-fold join consisting of $m-1$ copies of $I_5$ and the rest are $I_4$.
\end{theorem}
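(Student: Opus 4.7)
For the lower bound, take $G^* = I_5^{*(m-1)} * I_4^{*k}$ where $k = (n-5(m-1))/4$. As a complete multipartite graph with $m-1$ parts of size $5$ and $k$ parts of size $4$, an induced copy of $H = K_{5,\dots,5}$ with $m$ parts of size $5$ would require $m$ parts of $G^*$ of size at least $5$; but $G^*$ has only $m-1$ such parts. Iterating the K\"{u}nneth formula for joins of simplicial complexes,
$$b(\cl(G^*)) = b(\cl(I_5))^{m-1}\cdot b(\cl(I_4))^k = 4^{m-1}\cdot 3^k = (4/3^{5/4})^{m-1}\cdot 3^{n/4},$$
matching the claimed extremal value.

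For the upper bound we argue by double induction: outer on $m$, inner on $n$. The outer base case $m=1$ is the inequality $b_{I_5}(n)\leq 3^{n/4}$ (corresponding to $c'_5 = 3^{1/4}$ in Theorem~\ref{t:limit}), established earlier. Fix $m\geq 2$ and a $K_{5,\dots,5}$-free ($m$-part) graph $G$ on $n$ vertices, with trivial inner base cases for small $n$. We split on the maximum non-degree. \emph{Case 1: some vertex $v$ has at least $5$ non-neighbors.} Both $G-v$ and $G|_{N(v)}$ remain $K_{5,\dots,5}$-free with $m$ parts, on $n-1$ and at most $n-6$ vertices respectively. Since $v*\cl(N(v))$ is a cone, the Mayer--Vietoris sequence for $\cl(G) = \cl(G-v)\cup(v*\cl(N(v)))$, whose intersection is $\cl(N(v))$, gives $b(\cl(G))\leq b(\cl(G-v))+b(\cl(N(v)))$. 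The inner hypothesis then bounds this by
$$(4/3^{5/4})^{m-1}\bigl(3^{(n-1)/4}+3^{(n-6)/4}\bigr) = (4/3^{5/4})^{m-1}\cdot 3^{n/4}\cdot(3^{-1/4}+3^{-3/2}),$$
which lies within the desired bound because $3^{-1/4}+3^{-3/2}\approx 0.952<1$.

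\emph{Case 2: every vertex has at most $4$ non-neighbors.} If $G$ contains no induced $I_5$ the outer base case already gives $b(\cl(G))\leq 3^{n/4}$, which is stronger than required. Otherwise fix an induced $I_5$, say $A=\{v_1,\dots,v_5\}$. Each $v_i$ has the other four members of $A$ as non-neighbors, so by the hypothesis these are \emph{all} of $v_i$'s non-neighbors, and $v_i$ is adjacent to every vertex of $V\setminus A$. Hence $G = I_5 * G|_{V\setminus A}$, and because $I_5 * K_{5,\dots,5}$ with $m-1$ parts is $K_{5,\dots,5}$ with $m$ parts, the $H$-freeness of $G$ forces $G|_{V\setminus A}$ to be $K_{5,\dots,5}$-free with $m-1$ parts. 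The outer inductive hypothesis together with K\"{u}nneth yields
$$b(\cl(G)) = 4\cdot b(\cl(G|_{V\setminus A}))\leq 4\cdot(4/3^{5/4})^{m-2}\cdot 3^{(n-5)/4} = (4/3^{5/4})^{m-1}\cdot 3^{n/4},$$
closing both inductions. The main subtlety is the calibration of the threshold $5$: any higher and the Mayer--Vietoris inequality in Case 1 would fail to close, any lower and Case 2 would lose the forced join decomposition. The two branches meet precisely at the extremal value because the extremal graph is itself built from the ``atoms'' $I_5$ and $I_4$ corresponding exactly to these two operations.
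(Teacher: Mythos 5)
Your proof is correct, and while it shares the paper's overall skeleton --- a double induction on $(m,n)$ in which join factors isomorphic to $I_5$ (isolated $K_5$'s in the complement) are peeled off via K\"unneth, plus the same extremal construction for tightness --- the core inductive step is genuinely different and cleaner. The paper (Proposition~\ref{p:no_mK5}) works in the complementary setting, takes a vertex of \emph{minimum} degree $d$ in the complement, and applies the iterated recurrence of Lemma~\ref{l:recurrent_simp}; this forces a case split on $d$, and the delicate case $d=4$ is resolved by observing that the absence of an isolated $K_5$ yields either a missing edge among the neighbors or a neighbor of degree at least $5$, together with the root inequality $3\Theta_3+1\le\Theta_3^6$. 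You instead split on the \emph{maximum} degree of the complement: a vertex with at least $5$ non-neighbors admits a single application of Lemma~\ref{l:single_vertex} (your Mayer--Vietoris step), closed by $3^{-1/4}+3^{-3/2}\approx 0.952<1$, and when every vertex has at most $4$ non-neighbors you either have no induced $I_5$ --- so the $m=1$ bound $b_{I_5}(n)\le 3^{n/4}$ of Proposition~\ref{p:no_K5} applies as a black box and is stronger than needed --- or any induced $I_5$ is automatically a join factor by a degree count. This buys a shorter argument with no case analysis on the minimum degree and no need for the computation $3\Theta_3+1\le\Theta_3^6$, at the cost of leaning slightly harder on the $m=1$ case (which both proofs require as the outer base case anyway).
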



The upper bound given in Theorem~\ref{t:limit} for $H=I_3$ slightly improves the original bound by Adamaszek, but do not believe it to be optimal yet.
We present it mainly for the proof, which sets up a method how to push Adamaszek's approach further. We believe
that by the same method, the obtained value can be further improved, possibly even to the optimal bound, at the cost of a more extensive case analysis.

Regarding $H=I_4$, we show that $c'_4 = c_{I_4} = 2^{1/3}$.
The proof requires an extensive case analysis; therefore, we keep it separately
in the appendix. (However, some new ideas are needed as well to perform the analysis.)
In fact we show exact bound $b_{I_4}(n) \leq 2^{n/3}$; see
Theorem~\ref{t:K4} (in complementary setting, explained in the Preliminaries). This bound is tight if $n$ is
divisible by $3$, which is witnessed by the $n/3$-fold join of $I_3$. In this case,
we did not attempt to obtain a more precise bound for $H = K_{4,\dots,4}$ given
the length of the analysis for $I_4$.





We now improve the bounds for graphs where the growth is subexponential, specifically, for certain $H=K_{i_1,\ldots,i_m}$ where $i_j\leq 2$ for any $j$.
\begin{theorem}\label{thm:C_4}
  If $H=K_{2,2} = C_4$ is the $4$-cycle, then there are constants $c,C>0$ such that for any $n$
  \[cn^{3/2} < b_{C_4}(n) < \blue{n^{C{\log n}}}.\]
\end{theorem}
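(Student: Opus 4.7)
My plan handles the two bounds separately. For the lower bound $b_{C_4}(n) > cn^{3/2}$, I will take $G$ to be the point--line incidence graph of a projective plane of order $q$, which exists for every prime power $q$. Because two distinct points determine a unique line and two distinct lines meet in a unique point, $G$ has girth $6$; in particular it contains no induced $C_4$. Being bipartite, $\cl(G)$ equals $G$, and with $n := |V(G)| = 2(q^2+q+1)$ and $|E(G)| = (q+1)(q^2+q+1)$ one computes
\[
b_1(\cl(G)) = |E(G)| - |V(G)| + 1 = (q-1)(q^2+q+1) + 1 = \Theta(n^{3/2}).
\]
Interpolation to all $n$ will follow by padding with isolated vertices, together with the density of prime powers.

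For the upper bound, the key structural observation will be that if $u, v$ are non-adjacent vertices of an induced-$C_4$-free graph $G$, then $N(u) \cap N(v)$ is a clique of $G$: two non-adjacent common neighbors of $u$ and $v$ would, together with $u$ and $v$, form an induced $C_4$. Topologically, this means that the link in $\cl(G)$ of every missing edge is a simplex, hence contractible. I plan to combine this with the standard Mayer--Vietoris inequality
\[
b(\cl(G)) \le b(\cl(G - v)) + b(\cl(G[N(v)])) + 1,
\]
in which $G[N(v)]$ is itself induced-$C_4$-free, together with a complementary non-edge splitting step where the contractibility of the link of a non-edge $\{u,v\}$ keeps the intersection term cheap. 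The goal will be to arrange for a recurrence of the shape $T(n) \le n \cdot T\bigl(n^{1-c/\sqrt{\log n}}\bigr)$, whose solution satisfies $\log T(n) = O((\log n)^{3/2})$, equivalent to $T(n) \le n^{C\sqrt{\log n}}$.

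The main obstacle will be the combinatorial dichotomy needed to justify the recurrence at every scale: one must locate either a vertex of sufficiently small degree or a non-edge along which the graph can be split so that the ``topologically relevant'' residual subgraph has only $\approx n^{1-1/\sqrt{\log n}}$ vertices. Because $K_n$ is itself induced-$C_4$-free, small-degree vertices need not exist, so the non-edge alternative will carry the weight; extracting the precise savings from the clique structure of common neighborhoods---rather than a weaker polynomial savings---will be the technical heart of the proof.
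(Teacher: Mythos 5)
Your lower bound is correct and is essentially the paper's own argument: the incidence graph of a projective plane is bipartite and $C_4$-free, its clique complex is the graph itself, and $b_1=e-v+1=\Theta(n^{3/2})$; padding with isolated vertices and density of prime powers handle general $n$. No issues there.

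The upper bound, however, has a genuine gap at its center, which you yourself flag as ``the main obstacle'' and ``the technical heart'': you never establish the dichotomy that would produce the recurrence $T(n)\le n\cdot T\bigl(n^{1-c/\sqrt{\log n}}\bigr)$. The observation that $N(u)\cap N(v)$ is a clique for non-adjacent $u,v$ (equivalently, that the link of a non-edge is a simplex) is true, and the Mayer--Vietoris step over a vertex is standard, but neither of these tells you that some ``topologically relevant'' residual subgraph has only $n^{1-c/\sqrt{\log n}}$ vertices. Since $G[N(v)]$ can have $n-1$ vertices (cones over $C_4$-free graphs are $C_4$-free), the vertex-splitting branch gives no savings, and the non-edge branch as described only shows that $\mathrm{star}(u)\cup\mathrm{star}(v)$ is contractible --- it does not bound the complexity of the complement of that union, nor does it explain where the exponent $1-c/\sqrt{\log n}$ would come from. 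As it stands, the argument proves nothing beyond the trivial bound.

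For comparison, the paper's mechanism is quite different: it does not recurse on the number of vertices at all. Instead it shows that the support of any nontrivial homology $d$-cycle in a $C_4$-free clique complex must contain at least $2^{\binom{d-1}{2}}$ vertices, by introducing $d$-minimal cycles, proving that links of vertices in such cycles are again homologically nontrivial, and setting up coupled recursions $\beta(d)\ge\beta(d-1)+\gamma(d)+1$, $\gamma(d)\ge\gamma(d-1)\,\omega(d-1)$, and $\omega'(d)\ge 2\omega'(d-1)-2$ for the size of independent sets inside supports of minimal cycles (this last recursion is where the no-induced-$C_4$ hypothesis does its work, via the observation you made about common neighborhoods). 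This forces all homology above dimension $O(\sqrt{\log n})$ to vanish, and the total Betti number is then at most the number of faces of dimension $O(\sqrt{\log n})$, i.e.\ $n^{O(\sqrt{\log n})}$. If you want to salvage your plan, you would either need to supply the missing splitting lemma (for which I see no candidate) or redirect your clique-of-common-neighbors observation toward bounding the top nonvanishing homological dimension, as the paper does.
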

\begin{theorem}\label{thm:poly}
  If $H = K_{i_1,1,\dots,1}$ where $i_1 \leq 2$, then $b_H(n)$ has a polynomial
  growth \[b_H(n)=\Theta(n^{m-1})\] where $m$ is the number of parts in $H$.
\end{theorem}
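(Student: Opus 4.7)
The plan is to prove matching bounds of $\Theta(n^{m-1})$, handling the two subcases $i_1 = 1$ and $i_1 = 2$ uniformly for the lower bound and separately for the upper bound.

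For the lower bound, in both subcases I take $G$ to be the complete $(m-1)$-partite graph $K_{k,\dots,k}$ with $k = \lfloor n/(m-1) \rfloor$, i.e., the $(m-1)$-fold graph join of the independent set $I_k$. Every induced subgraph of this $G$ is complete multipartite with at most $m-1$ parts, whereas $H = K_{i_1,1,\dots,1}$ has exactly $m$ parts, so $H$ does not occur as an induced subgraph. The clique complex $\cl(G)$ is the $(m-1)$-fold simplicial join of $k$ discrete points, and iterating the join formula for reduced homology gives $\widetilde{H}_{m-2}(\cl(G)) \cong \K^{(k-1)^{m-1}}$, whose dimension is $\Theta(n^{m-1})$.

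For the upper bound with $i_1 = 1$, $H = K_m$ and $\cl(G)$ has dimension at most $m-2$, so its total Betti number is bounded by the total face count, $\sum_{j=-1}^{m-2}\binom{n}{j+1} = O(n^{m-1})$. For $i_1 = 2$, $H = K_{2,1,\dots,1}$ has $m+1$ vertices, and the key structural observation is that forbidding $H$ as an induced subgraph forces the common neighborhood $N(\sigma)$ of every $(m-1)$-clique $\sigma$ in $G$ to itself be a clique; equivalently, each $(m-1)$-clique lies in the unique maximal clique $\sigma \cup N(\sigma)$. Consequently, any two distinct maximal cliques of $G$ of size at least $m$ intersect in at most $m-2$ vertices.

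Writing $K_0$ for the $(m-2)$-skeleton of $\cl(G)$ and $\Sigma_1,\dots,\Sigma_t$ for the maximal cliques of size $\geq m$, the bound on pairwise intersections yields a direct sum decomposition $H_j(\cl(G), K_0) \cong \bigoplus_i H_j(\Sigma_i, \Sigma_i^{(m-2)})$. Each $\Sigma_i$ is a contractible simplex whose $(m-2)$-skeleton is homotopy equivalent to a wedge of $\binom{|\Sigma_i|-1}{m-1}$ copies of $S^{m-2}$, so the long exact sequence of each pair shows $H_j(\cl(G), K_0)$ is concentrated in degree $m-1$ with total dimension $r = \sum_i \binom{|\Sigma_i|-1}{m-1}$. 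Plugging this into the long exact sequence of $(\cl(G), K_0)$ and using $\dim K_0 \leq m-2$ gives $b(\cl(G)) \leq b(K_0) + r$. The first summand is $O(n^{m-1})$ as above; for the second, since every $(m-1)$-clique of $G$ lies in at most one $\Sigma_i$, we have $r \leq \sum_i \binom{|\Sigma_i|}{m-1} \leq \binom{n}{m-1} = O(n^{m-1})$. The main obstacle is isolating the right structural consequence of the forbidden-induced-subgraph condition; once the clique-neighborhood property is identified, the remainder reduces to a standard long exact sequence argument combined with a clean double count.
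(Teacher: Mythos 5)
Your proof is correct and takes essentially the same approach as the paper: the identical Tur\'an/balanced-multipartite lower bound, and for $i_1=2$ the same key structural fact that forbidding an induced $K_{m+1}$ minus an edge forces distinct maximal cliques of size $\geq m$ to meet in at most $m-2$ vertices. The paper packages the upper bound as an iterated Mayer--Vietoris argument showing the homology of $\cl(G)$ vanishes in dimensions $\geq m-1$, whereas you run the long exact sequence of the pair $(\cl(G),K_0)$ and bound the relative term by a double count of $(m-1)$-cliques; this is only a difference in homological bookkeeping.
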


Note that for $C_4=K_{2,2}$ our upper bound on $b_{C_4}(n)$ is subexponential but superpolynomial. The main problem on the growth of $b_{H}(n)$ that remains open is the following.

\begin{question}\label{q:poly}
For any $k\geq 2$ and $l\geq 0$ let $H=K_{2,\ldots,2,1,\ldots,1}$ with $k$ parts of size $2$ and $l$ parts of size $1$.
Does $b_H(n)$ have a polynomial growth, namely, is there a function $f(k,l)$ such that
$b_H(n) < n^{f(k,l)}$ for any large enough $n$?
\end{question}

A necessary condition for a superpolynomial growth when $H=C_4$ is that for any positive
integer $d$ there is a graph $G_d$ with no \blue{induced} $C_4$ such that $\cl(G_d)$
has a nonvanishing homology in dimension $>d$.
As mentioned, such constructions exist:
Januszkiewicz and \'{S}wi\c{a}tkowski \cite{JS} found $G_d$ such that $\cl(G_d)$ is a $d$-dimensional pseudomanifold
, for any positive integer $d$.

\blue{
Outline:
In Section \ref{sec:prelim} we overview relevant results of
Adamaszek~\cite{Adamaszek:Betti}, in Section \ref{sec:limit} we prove the existence of
the limit $c_H$, in Section \ref{sec:clique} we prove
Theorem~\ref{thm:poly},
in Section \ref{sec:C_4} we prove
Theorem~\ref{thm:C_4}, in Section \ref{sec:sup} we provide the exponential bounds for
$c_{I_5}, c_{I_3}$ stated in Theorem~\ref{t:limit} and the refined
bounds of Theorem~\ref{thm:I_3<H}. (Sections~\ref{sec:clique}, \ref{sec:C_4}
and~\ref{sec:sup} are mutually independent.) Concluding remarks are given in
Section~\ref{sec:conclude}. Appendix~\ref{a:K4} contains the proof of the
optimal bound for $c_{I_4}$.
}


\section{Preliminaries}\label{sec:prelim}

\blue{For technical reasons, it will be convenient for us to restate the main
results in `complementary setting', that is, we
consider here a clique complex over a graph as the independence complex over the complement of the graph. In particular, some of the graph
theoretical notions that we will meet along the way are much more natural in
the complementary setting. We will emphasize the complementary setting by bold
letters.}

\blue{
Let $G$ be a graph. By $\bar G$ we denote the complement of $G$. Next, by $\bb(G)$ we denote the sum
of the reduced Betti numbers of the independence complex of $G$ (computed over
some fixed field of coefficients). We also denote $\bb_H(n) := \max{\bb(G)}$
when the maximum is taken over all graphs on at most $n$ vertices without
induced copy of $H$. We have $\bb_H(n) = b_{\bar H}(n)$ because $H \not\leq G$
if and only if $\bar H \not\leq \bar G$. By $G_1 \du G_2$ we denote the
disjoint union of graphs $G_1$ and $G_2$. Note that the complement of the
infinite multipartite graph $K_{5,5,\dots}$ is the infinite disjoint union of
complete graphs $K_5
\du K_5 \du \cdots$. Similarly, the complement of $K_{i_1, \dots, i_m}$ is the
disjoint union of complete graphs $K_{i_1} \du \cdots \du K_{i_m}$.
}

\blue{
  Now, we may restate Theorems~\ref{t:limit} and \ref{thm:I_3<H} in the
  complementary setting (omitting approximations of the values). On the other
  hand, we do not restate Theorems~\ref{thm:C_4} and~\ref{thm:poly} as we prove
them in the primary setting.}

\blue{
  \begin{theorem}[Theorem~\ref{t:limit} in the complementary setting]
\label{t:limit_c}
Let $H$ be any graph. The limit $\cc_H = \lim_{n\rightarrow\infty}\sqrt[n]{\bb_{H}(n)}$ exists. In addition:
\begin{enumerate}[(i)]
\item
  If $H$ is not an induced subgraph of $K_5 \du K_5 \du \cdots$, then $\cc_H = 4^{1/5}$.
\item
 For every $i \in \{1,\dots,5\}$ there is a value $c'_{i}$ with the following
    property. If $H = K_{i_1} \du \cdots \du K_{i_m}$ with $5 \geq i_1 \geq \cdots \geq i_m\geq 1$, then
 $\cc_H = c'_{i_1}$. Moreover, $c'_5 = 3^{1/4}$, $c'_4 = 2^{1/3}$, $c'_3 \in [8^{1/14}, \Gamma_4]$, and $c'_2 = c'_1 = 1$.
\end{enumerate}
\end{theorem}
}

\blue{
  \begin{theorem}[Theorem~\ref{thm:I_3<H} in the complementary setting]\label{thm:I_3<H_c}
    If $H = K_5 \du \cdots K_5$ is a disjoint union of $m$ copies of $K_5$, $m\geq 1$, then
    $$\bb_H(n)\leq  \left(\frac{4}{3^{5/4}}\right)^{m-1}\cdot 3^{n/4}.$$
    This bound is tight if $n - 5(m-1)$ is divisible by $4$ and positive, and
    is attained by the $(m-1)+\frac{n - 5(m-1)}{4}$-fold disjoint union
    consisting of $m-1$ copies of $K_5$ and the rest are $K_4$.
\end{theorem}
}

\blue{
Now we overview some of the results of Adamaszek~\cite{Adamaszek:Betti} that
will be also useful for us.
Following Adamaszek, we keep presenting the results in the complementary setting.
We will occasionally need the following lemma, which easily follows from the
K\"{u}nneth formula.}



%

\begin{lemma}[{\cite[Lemma~2.1(a)]{Adamaszek:Betti}}]
  \label{l:du}
  Let $G$ and $H$ be two graphs. Then $\bb(G \du H) = \bb(G) \bb(H)$
\end{lemma}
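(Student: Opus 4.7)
The plan is to reduce the claim to the standard Künneth isomorphism by first identifying the relevant simplicial complex as a join. Let $\ico(G)$ denote the independence complex of a graph $G$. The first step is the combinatorial observation that
\[
\ico(G \du H) \;=\; \ico(G) \ast \ico(H),
\]
where $\ast$ denotes the simplicial join. Indeed, because there are no edges between the vertex set of $G$ and that of $H$ in $G \du H$, a subset $S \subseteq V(G) \du V(H)$ is independent in $G \du H$ if and only if $S \cap V(G)$ is independent in $G$ and $S \cap V(H)$ is independent in $H$; this is exactly the face condition for the join.

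Next I would invoke the classical fact that, for two (nonempty) simplicial complexes $K$ and $L$ and a field $\K$, there is a natural isomorphism
\[
\widetilde{H}_n(K \ast L;\K) \;\cong\; \bigoplus_{i+j=n-1}\widetilde{H}_i(K;\K)\otimes_\K\widetilde{H}_j(L;\K),
\]
with the convention $\widetilde{H}_{-1}(\emptyset;\K)=\K$ to handle empty complexes. Over a field there are no Tor terms, so this is a direct consequence of the Künneth formula together with the standard homotopy equivalence $K\ast L \simeq \Sigma(K\wedge L)$ (or, equivalently, the fact that the reduced chain complex of a join is the shifted tensor product of the reduced chain complexes of the factors).

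Finally, summing dimensions over all $n$ gives
\[
\bb(G \du H) \;=\; \sum_{n}\dim_\K \widetilde{H}_n(\ico(G)\ast \ico(H);\K)
  \;=\; \sum_{i,j}\dim_\K \widetilde{H}_i(\ico(G);\K)\cdot \dim_\K \widetilde{H}_j(\ico(H);\K),
\]
and the right-hand side factors as $\bb(G)\cdot \bb(H)$, as required.

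There is essentially no hard step here: the content of the lemma is packaged inside the Künneth formula for joins, and the only thing one really has to verify is the purely combinatorial identity $\ico(G\du H)=\ico(G)\ast \ico(H)$. The one mild point to be careful about is bookkeeping the $i=-1$ or $j=-1$ summands (i.e.\ the empty face contribution), which is exactly why the reduced Betti number $b_{-1}$ is included in the definition of $\bb$ in the paper; with this convention the multiplicativity is clean and requires no special case for empty or cone-like factors.
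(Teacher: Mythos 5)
Your proposal is correct and matches the paper's (and Adamaszek's) approach: the paper simply notes that the lemma "easily follows from the Künneth formula," and your argument — identifying $\ico(G\du H)$ as the join $\ico(G)\ast\ico(H)$ and applying the Künneth formula for joins over a field, with the $\widetilde{H}_{-1}(\emptyset;\K)=\K$ convention handling the empty-face bookkeeping — is exactly the intended derivation.
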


Given a graph $G$, by the symbol $N[u] = N_{G}[u]$ we denote the
\emph{closed} neighborhood of a vertex $u$ in $G$, that is, the set of neighbors of $u$ including $u$.
Given a set $A$ of vertices of $G$, by
$G - A$ we mean the induced subgraph of
$G$ induced by $V(G)\setminus A$. We also write $G - v$ instead of $G - \{v\}$
for a vertex $v$ of $G$. Let us state another lemma by Adamaszek useful
for us.

\begin{lemma}[{\cite[Lemma~2.1(c)]{Adamaszek:Betti}}]
  \label{l:single_vertex}
  For any vertex $v$ of a graph $G$ we have $\bb(G) \leq \bb(G - v) +
  \bb(G - N[v])$.
\end{lemma}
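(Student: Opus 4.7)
The plan is to deduce the inequality from a Mayer--Vietoris decomposition of the independence complex $\mathrm{Ind}(G)$ at the vertex $v$. Write $X = \mathrm{Ind}(G)$ and split $X = A \cup B$, where $A$ consists of all faces of $X$ that do not contain $v$ and $B$ is the closed star of $v$ in $X$, i.e.\ all faces $\sigma$ of $X$ with $\sigma \cup \{v\}$ still a face. The first simplicial identification to make is $A = \mathrm{Ind}(G-v)$, which is immediate because independent sets of $G$ avoiding $v$ are exactly the independent sets of $G-v$. The second identification is $A \cap B = \mathrm{Ind}(G - N[v])$: a face in $A\cap B$ is an independent set of $G$ that does not contain $v$ but can be extended by $v$, equivalently a subset of $V(G)\setminus N[v]$ which is independent in $G$, which is the same as an independent set in the induced subgraph on $V(G)\setminus N[v]$.

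Next I would observe that $B$ is the combinatorial cone with apex $v$ over $\mathrm{Ind}(G-N[v])$ and is therefore contractible, so $\widetilde{H}_\ast(B) = 0$. Feeding $A$, $B$, and $A\cap B$ into the reduced Mayer--Vietoris long exact sequence yields, for each $i$,
\[
\cdots \to \widetilde{H}_i(A\cap B) \to \widetilde{H}_i(A) \to \widetilde{H}_i(X) \to \widetilde{H}_{i-1}(A\cap B) \to \widetilde{H}_{i-1}(A) \to \cdots
\]
(The piece $\widetilde{H}_i(B)$ vanishes and so drops out.) Exactness at $\widetilde{H}_i(X)$ gives the pointwise bound
\[
\dim_\K \widetilde{H}_i(X) \;\leq\; \dim_\K \widetilde{H}_i(A) + \dim_\K \widetilde{H}_{i-1}(A\cap B),
\]
since the image of $\widetilde{H}_i(A) \to \widetilde{H}_i(X)$ has dimension at most $\dim \widetilde{H}_i(A)$ and the kernel of the connecting map into $\widetilde{H}_{i-1}(A\cap B)$ has dimension at most $\dim \widetilde{H}_{i-1}(A\cap B)$.

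Finally, summing the pointwise inequality over all $i \geq -1$ (using reduced homology throughout so that the $i=-1$ term correctly accounts for the empty complex and the ``$+1$'' in the definition of $\bb$), the right-hand side becomes $\bb(G-v) + \bb(G-N[v])$ after a harmless index shift in the second sum, which is exactly the asserted bound. I do not anticipate a real obstacle: the only care needed is the bookkeeping of reduced versus unreduced homology at the bottom dimension and the verification that the star/link decomposition above is valid in the degenerate cases $v$ isolated in $G$ or $N[v] = V(G)$, both of which reduce to easy direct checks.
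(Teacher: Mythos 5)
Your proof is correct and is exactly the argument the paper has in mind: the paper disposes of this lemma in one sentence, citing the Mayer--Vietoris long exact sequence for the star/anti-star decomposition at $v$, and your write-up supplies precisely those details (including the identifications $A=\mathrm{Ind}(G-v)$, $A\cap B=\mathrm{Ind}(G-N[v])$, the contractibility of the star $B$, and the reduced-homology bookkeeping in degree $-1$).
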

The lemma follows from the Mayer-Vietoris long exact sequence for the decomposition of a simplicial complex as the union of a star and anti-star of some vertex.

Now, let us assume that $v$ is a vertex of degree $d$ of $G$ and let $v_1, \dots, v_d$ be all its neighbors (in arbitrarily chosen order).
An iterative application of the previous lemma gives the following recurrent
bound; see~\cite[Eq. (5)]{Adamaszek:Betti}. (Note that Adamaszek states the
bound in slightly different notation. He also assumes that $v$ is a vertex of
minimum degree. However, this assumption is unimportant in the proof of Eq.~(5)
in \cite{Adamaszek:Betti}; it is only used in subsequent computations.)

\begin{lemma}
\label{l:recurrent_simp}
Let $v$ be a vertex of degree $d$ and $v_1,\dots, v_d$ all its neighbors. Then
$$
\bb(G) \leq \sum\limits_{i=1}^{d} \bb(G - N[v_i] - \{v_1, \dots, v_{i-1}\}).
$$
\end{lemma}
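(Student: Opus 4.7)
The plan is to invoke Lemma~\ref{l:single_vertex} iteratively, peeling off the neighbors of $v$ one at a time. I would set $G_0 := G$ and $G_i := G - \{v_1, \ldots, v_i\}$ for $i = 1, \ldots, d$. Since $v_i$ still belongs to $G_{i-1}$, applying Lemma~\ref{l:single_vertex} in $G_{i-1}$ at the vertex $v_i$ yields
\[
\bb(G_{i-1}) \;\le\; \bb(G_i) \;+\; \bb\bigl(G_{i-1} - N_{G_{i-1}}[v_i]\bigr),
\]
using that $G_{i-1} - v_i = G_i$.

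The one bit of bookkeeping is to identify the second term with the one advertised in the statement, i.e.\ to check
\[
G_{i-1} - N_{G_{i-1}}[v_i] \;=\; G - N_G[v_i] - \{v_1, \ldots, v_{i-1}\}.
\]
In both induced subgraphs the deleted vertex set equals $\{v_1, \ldots, v_{i-1}\} \cup N_G[v_i]$, regardless of which of $v_1, \ldots, v_{i-1}$ happen to be adjacent to $v_i$ in $G$: any such neighbor is absorbed into $N_G[v_i]$ on the right-hand side and into $\{v_1, \ldots, v_{i-1}\}$ on the left. Telescoping the resulting inequalities over $i = 1, \ldots, d$ then produces
\[
\bb(G) \;\le\; \sum_{i=1}^{d} \bb\bigl(G - N[v_i] - \{v_1, \ldots, v_{i-1}\}\bigr) \;+\; \bb(G_d).
\]

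To finish, I would observe that in $G_d$ all $d$ neighbors of $v$ have been removed, so $v$ is isolated in $G_d$. Hence $v$ is a cone apex of the independence complex of $G_d$ (every independent set of $G_d$ extends by $v$), so that complex is contractible and $\bb(G_d) = 0$; the stated inequality falls out. There is no serious obstacle — the argument is a clean telescoping. The only points that could trip one up are the set-theoretic identity (verifying that the \emph{updated} closed neighborhood $N_{G_{i-1}}[v_i]$ recovers $N_G[v_i]$ once combined with the already-deleted $\{v_1, \ldots, v_{i-1}\}$) and the realization that the tail term $\bb(G_d)$ needs to be, and is, zero because $v$ becomes isolated.
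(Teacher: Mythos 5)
Your proposal is correct and is exactly the argument the paper intends: the paper proves this lemma by the same iterative (telescoping) application of Lemma~\ref{l:single_vertex}, deferring details to Adamaszek's Eq.~(5). Your careful verification of the set identity $G_{i-1} - N_{G_{i-1}}[v_i] = G - N_G[v_i] - \{v_1,\dots,v_{i-1}\}$ and the observation that the tail term vanishes because $v$ becomes isolated (making the independence complex a cone) are precisely the two points one needs to check.
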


From this lemma, Adamaszek deduces bounds on $\bb(G)$ for arbitrary graph $G$
and for a graph $G$ which is triangle-free. It is very useful for our further
approach to describe how to get such bounds from Lemma~\ref{l:recurrent_simp}.

Given a class $\G'$ of graphs, let $\bb(\G'; n)$ denote the maximum possible
$\bb(G')$ for a graph $G' \in \G'$ on at most $n$ vertices, assuming that such a graph exists (otherwise $\bb(\G'; n)$ remains undefined).
Let $\G$ denote the class
of all graphs and $\G_i$ denote the class of the $K_i$-free graphs, namely graphs with no copy of the complete graph on $i$ vertices.


From now on let us assume that $G$ is a fixed graph with $n$ vertices. We may
also assume that $G$ does not contain isolated vertices, otherwise $\bb(G) =
0$ \blue{(in this case, the independence complex of $G$ is a cone and therefore
contractible)}.
We also set $n_i$ to be the number of vertices of $G - N[v_i] - \{v_1,
\dots, v_{i-1}\}$ where $v$ and $v_1, \dots, v_d$ are as above, for $i \in
[d]$. In addition, \blue{from now on} we assume that $v$ is a vertex of minimum degree.
Lemma~\ref{l:recurrent_simp} implies
\begin{equation}
\label{e:rec_arbitrary}
\bb(G) \leq d \cdot \bb(\G; n - d -1)
\end{equation}
if $G$ is an arbitrary graph, and
\begin{equation}
\label{e:rec_triangle_free}
  \bb(G) \leq \sum\limits_{\blue{i=1}}^{\blue{d}} \bb(\G_3; n - i - d)
\end{equation}
if $G$ is triangle-free. Indeed, if $G$ is arbitrary, then $n_i$
is at most $n-(d+1)$ since $|N[v_{i+1}] \cup \{v_1,\dots, v_i\}| \geq
|N[v_{i+1}]| \geq d+1$,
and if $G$ is triangle-free, then $n_i$ is at most $n-(d+1+i-1)$ since $N[v_i]$ and $\{v_1, \dots, v_{i-1}\}$ are in addition disjoint.


In order to conclude a suitable bound on $\bb(G)$, it is sufficient to plug a
suitable function into the formulas above and prove the bound inductively.
Concretely, we set $\Theta_d = d^{1/(d+1)}$ and we set $\Gamma_d$ to be the
unique root on $[1,2]$ of the polynomial equation
\begin{equation}
\label{e:Gamma}
x^{2d} - x^{d-1} - x^{d-2} - \cdots - x - 1 = 0.
\end{equation}
It turns out that the sequence $\Theta_d$ is increasing on $[1,4]$ and
decreasing on $[4, \infty)$. In particular, it is maximized for $d=4$.
  Similarly, $\Gamma_d$ is increasing on $[1,3]$ and decreasing on $[3,
  \infty]$, therefore maximized for $d = 3$. Later on we will need to know
  approximative values of $\Gamma_d$ and $\Theta_d$ for small $d$; we provide
  these values for small $d$ in Table~\ref{tab:theta_gamma}.

\begin{table}
  \begin{center}
  \begin{tabular}{|c|c|c|c|c|c|}
\hline
$d$ & $1$ & $2$ & $3$ & $4$ & $5$ \\
\hline
$\Theta_d$ & 1 & 1.2599 & 1.3161 & 1.3195 & 1.3077  \\
\hline
$\Gamma_d$ & 1 & 1.2207 & 1.2499 & 1.2434 & 1.2293  \\
\hline
\end{tabular}
\caption{Approximative values of $\Theta_d = d^{1/(d+1)}$ and $\Gamma_d$.}
\label{tab:theta_gamma}
\end{center}
\end{table}

  Now, if we inductively assume that $\bb(\G; k) \leq \Theta_4^k$ for $k < n$,
  then Equation~\eqref{e:rec_arbitrary} gives
\begin{equation}
\label{e:theta_4_bound}
 \bb(G) \leq d \cdot \Theta_4^{n-d-1} = \Theta_4^n \cdot \frac{d}{\Theta_4^{d+1}}
 \leq \Theta_4^n \cdot \frac{d}{\Theta^{d+1}_d} = \Theta_4^n,
 \end{equation}
which proves $\bb(\G, n) \leq \Theta_4^n$. A similar computation yields
$\bb(\G_3, n) \leq \Gamma_3^n$.

 The first bound is tight as pointed out by Adamaszek, at least for $n$
 divisible by $5$. We will show that the second bound is not tight, and can be improved to $\Gamma_4^n < \Gamma_3^n$;
 see Section~\ref{sec:[3]}.

\section{Existence of the limit $\lim_{n\rightarrow \infty}\sqrt[n]{b_{H}(n)}$}\label{sec:limit}

Our \blue{first goal} is to show that the limit $\lim_{n\rightarrow \infty}\sqrt[n]{b_{H}(n)}$ exists for any $H$.

We \blue{still} work in the complementary setting. \blue{This means that we will prove
the existence of limit $\cc_H =\lim_{n\rightarrow\infty}\sqrt[n]{\bb_{H}(n)}$,
in the setting of Theorem~\ref{t:limit_c}.}
Recall from the Preliminaries that $\bb_H(n)=b_{\bar H}(n)$, where $\bar H$ is the complement of a graph $H$.

First, we consider the case when $H$ is connected.

\begin{proposition}\label{prop:limit}
If $H$ is connected, then the limit $\lim_{n\rightarrow \infty}\sqrt[n]{\bb_{H}(n)}$ exists.
\end{proposition}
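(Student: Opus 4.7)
My plan is to apply Fekete's lemma (the multiplicative version) to the sequence $\bb_H(n)$. The heart of the argument is to show supermultiplicativity:
$$\bb_H(m+n) \geq \bb_H(m)\cdot \bb_H(n).$$

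To establish this, I would pick graphs $G_1$ on at most $m$ vertices and $G_2$ on at most $n$ vertices realizing the respective maxima, so each avoids an induced copy of $H$ and $\bb(G_i)=\bb_H(n_i)$ where $n_i=|V(G_i)|$. Consider their disjoint union $G_1 \du G_2$, which has at most $m+n$ vertices. Because $H$ is connected, any induced copy of $H$ inside $G_1\du G_2$ must lie entirely within a single component (an induced subgraph spanning both components would have to be disconnected). By hypothesis neither $G_1$ nor $G_2$ contains an induced $H$, so $G_1\du G_2$ also avoids induced $H$. Lemma~\ref{l:du} then gives
$$\bb(G_1\du G_2)=\bb(G_1)\,\bb(G_2)=\bb_H(m)\,\bb_H(n),$$
which certifies the inequality above.

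Setting $a_n := \log \bb_H(n)$, the resulting sequence is superadditive, so by Fekete's lemma $\lim_{n\to\infty} a_n/n$ exists and equals $\sup_n a_n/n$. Since the empty graph is always admissible and has total reduced Betti number $1$ (from $\widetilde H_{-1}=\K$), we have $a_n\geq 0$, so the limit lies in $[0,\infty]$. Finiteness follows from the absolute upper bound $\bb_H(n)\leq \bb(\G;n)\leq \Theta_4^n$ recalled in the Preliminaries, giving that $\lim_{n\to\infty} \sqrt[n]{\bb_H(n)}$ exists and lies in $[1,\Theta_4]$.

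There is no serious obstacle: the only delicacy is ensuring the combined construction stays in the forbidden-$H$ class, and this is exactly where the connectedness hypothesis on $H$ is used. The same argument fails for disconnected $H$, since then a disjoint union could create a spurious induced copy of $H$ straddling the two components; this is precisely the case that will require a separate treatment in the rest of the section.
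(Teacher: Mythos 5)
Your proof is correct and follows essentially the same route as the paper: supermultiplicativity of $\bb_H$ via disjoint unions (using connectedness of $H$ to preserve the forbidden-subgraph property, and Lemma~\ref{l:du} for the product of Betti sums), then Fekete's lemma and the uniform bound $\bb_H(n)\leq\Theta_4^n$ for finiteness.
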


\begin{proof}
Let, for any positive integer $n$, $G_n$ be a graph on at most $n$ vertices
which maximizes $\bb_{H}(n)$.

For any two positive integers $m,n$, the graph
$G_m \du G_n$ does not contain an induced copy of $H$ as $H$ is connected and
$G_m$ and $G_n$ do not contain an induced copy of $H$. (We recall that `$\du$'
stands for the disjoint union.) Therefore, by Lemma~\ref{l:du}, we get
$\bb_{H}(m+n) \geq \bb_H(m)\bb_{H}(n)$.
By the Fekete lemma for superadditive sequences~\cite{zbMATH02597740} (see also
\cite[Lem.11.6]{vLW}),
the limit $\lim_{n\rightarrow \infty}\sqrt[n]{\bb_{H}(n)}$ exists.
In addition, this limit is finite since we already know that
$\bb_H(n) \leq \Theta_4^n$ (or we can use the trivial bound $\bb_H(n) \leq
2^n$).
\end{proof}


We now turn to the case where $H$ is disconnected. \blue{Denote $\CC_H:=\limsup_n
\sqrt[n]{\bb_{H}(n)}$.}



\begin{proposition}\label{prop:limsup}
  Let $H_1$ and $H_2$ be any two graphs. Then $\CC_{H_1 \du H_2} \leq
  \max(\CC_{H_1}, \CC_{H_2})$. \blue{Consequently, the limit $\cc_H := \lim_{n\rightarrow
    \infty}\sqrt[n]{\bb_{H}(n)}$ exists for any graph $H$ and $\cc_{H_1 \du
    H_2} =
  \max(\cc_{H_1}, \cc_{H_2})$ for any two graphs $H_1$ and $H_2$.}
\end{proposition}


\begin{proof}

  \blue{We start by proving the first claim, the inequality, and
  will focus on the second claim, on $\cc_H$, at the end of the proof.}
  For
  simplicity of subsequent formulas, let $\alpha = \CC_{H_1 \du H_2}$ and
  $\alpha_i := \CC_{H_i}$ for $i \in \{1,2\}$. Without loss of generality, we
  will assume that $\alpha_1 \geq \alpha_2$, that is, our task is to show that
  $\alpha \leq \alpha_1$. We will achieve this task by showing that $\alpha
  \leq \alpha_1 + \varepsilon$ for any $\varepsilon > 0$.


  Form now on, let us fix $\varepsilon > 0$. We also fix a large enough integer
  parameter $p$ which depends on $\varepsilon$, but we will describe the exact
  dependency later on. Now let $G_n$ be a graph on at most $n$ vertices which
  maximizes $\bb_{H_1 \du H_2}(n)$, in particular, it does not contain an
  induced copy of $H_1 \du H_2$. By the definition of $\alpha_1$ we get
\begin{equation}
\label{e:bound_H1}
  \bb_{H_1}(n) \leq k(\varepsilon)(\alpha_1 + \varepsilon)^n
\end{equation}
 for every $n$
  where $k(\varepsilon)$ is a large enough constant depending only on
  $\varepsilon$. \blue{(From the definition of $\limsup$, we get $\bb_{H_1}(n) \leq
  (\alpha_1 + \varepsilon)^n$ for large enough $n$, depending on $\varepsilon$.
  The purpose of $k(\varepsilon)$ is to ensure validity of the
  inequality~\eqref{e:bound_H1} for all
$n$.)} Since $\alpha_2 \leq \alpha_1$, we can also assume that
\begin{equation}
\label{e:bound_H2}
  \bb_{H_2}(n) \leq k(\varepsilon)(\alpha_1 + \varepsilon)^n
\end{equation} eventually by adjusting $k(\varepsilon)$.

  Our aim is to show by induction that
\begin{equation}
\label{e:ind}
  \bb(G_n) \leq
  2^p k(\varepsilon)(\alpha_1 + \varepsilon)^n.
\end{equation}
  Note that this inequality is true for $n = 1$ since $\alpha_1 \geq 1$.

  It remains to prove Eq.~\eqref{e:ind} for a fixed $n$ assuming that it is
  true for every smaller value. Let us distinguish several cases.

\medskip

In the first case we assume that $G_n$ does not contain an induced copy of
$H_2$. Then we get the desired inequality directly from Eq.~\eqref{e:bound_H2}.

\medskip


In the second case, let us assume that there are at most $p$ vertices
of $G_n$ such that when we remove these vertices, we get a graph which does not
contain an induced copy of $H_1$. Our next task is to show that in this case,
$\bb(G_n) \leq 2^pk(\varepsilon)(\alpha_1 + \varepsilon)^n$ which implies
desired Eq.~\eqref{e:ind}. Let $u_1, \dots, u_j$, \blue{$j \leq p$}, be the removed vertices from
$G$. Let \blue{$G'$} be any induced subgraph of $G$ and let
$i := |V(\blue{G'}) \cap \{u_1, \dots, u_j\}|$ be the number of vertices
$u_1, \dots, u_j$ in \blue{$G'$}. We will prove by induction in $i$ that
\begin{equation}
\label{e:ind_i}
\blue{\bb(G')} \leq 2^ik(\varepsilon)(\alpha_1 + \varepsilon)^n.
\end{equation}
When we specify Eq.~\eqref{e:ind_i} to $G$, that is, \blue{$i = j \leq p$}, we get the desired
inequality.

The first induction step for $i = 0$ follows from the fact that \blue{$G'$} is
$H_1$-free in this case and from Eq.~\eqref{e:bound_H1}. (We could get a better
bound since the number of vertices of \blue{$G'$} is (typically) less than $n$, but
we do not need such an improvement.)

The second induction step for $i > 0$ follows directly from
Lemma~\ref{l:single_vertex} by removing one of the vertices $u_1, \dots, u_j$
which is also a vertex of \blue{$G'$}.

\medskip


Finally, we distinguish a third case when we assume that $G_n$ contains an
induced copy of $H_2$ and after removing any $p$ vertices from $G_n$ we still
get a graph that contains an induced copy of $H_1$. Let $H'_2$ be an induced
copy of $H_2$ in $G$ and let $h_i$ be the number of vertices of $H_i$ for $i
\in \{1,2\}$. We will prove that $H'_2$ contains a vertex of degree at least
$\frac{p-h_2}{h_2}$ in $G_n$. It is sufficient to show that there are more than
$p - h_2$ edges connecting $H'_2$ and the remainder of $G$. For contradiction,
\blue{suppose} there are at most $p - h_2$ such edges. Let $H'$ be the induced subgraph of
$G$ consisting of $H_2$ and all neighbors of vertices of $H_2$ inside $G$. Then
$H'$ has at most $p$ vertices. Consequently, there is an induced copy $H'_1$ of $H_1$ inside the induced subgraph of $G_n$ obtained from $G_n$ by removing the
vertices of $H'$ by our assumption of this distinguished case. By the definition
of $H'$, the two copies $H'_1$ and $H'_2$ are connected by no edge and
therefore we have found an induced copy of $H_1 \du H_2$, this is
a contradiction.

$H'_2$ contains a vertex $v$ of degree
$d \geq \frac{p-h_2}{h_2}$. Lemma~\ref{l:single_vertex} gives
\[
\bb(G_n) \leq \bb(G_n - v) + \bb(G_n - N_{G_n}[v])).
\]
Note that $G_n - v$ has at most $n-1$ vertices, $G_n - N_{G_n}[v]$ has at most
$n - d - 1$ vertices and both these graphs do not contain an induced copy of
$H_1 \du H_1$ since they are induced subgraphs of $G_n$. Therefore, the
induction in $n$ gives us

$$
\bb(G_n) \leq 2^p k(\varepsilon) \left( (\alpha_1 + \varepsilon)^{n-1} +
(\alpha_1 + \varepsilon)^{n-d -1} \right).
$$

It is easy to check that
$$
(\alpha_1 + \varepsilon)^{-1} + (\alpha_1 + \varepsilon)^{- d - 1} \leq 1
$$
if $d$ is large enough, that is, if $p$ is large enough, for fixed
$\varepsilon$, since $\alpha_1 \geq 1$.
Combining the two above-mentioned inequalities, we get the desired
inequality~\eqref{e:ind}.

\blue{It remains to deduce that the limit $\cc_H := \lim_{n\rightarrow
  \infty}\sqrt[n]{\bb_{H}(n)}$ exists for any graph $H$ and $\cc_{H_1 \du
    H_2} =
  \max(\cc_{H_1}, \cc_{H_2})$ for any two graphs $H_1$ and $H_2$.}

\blue{
We get
$$
\liminf_{n\rightarrow
    \infty}\sqrt[n]{\bb_{H_i}(n)}\leq \liminf_{n\rightarrow
        \infty}\sqrt[n]{\bb_{H_1 \du H_2}(n)}
$$
for any two graphs $H_1$, $H_2$ and $i \in \{1,2\}$, as a graph without induced
copy of $H_i$ does not contain induced copy of $H_1 \du H_2$. Consequently
$$
\max_{i=1,2}\liminf_{n\to\infty} \sqrt[n]{\bb_{H_i}(n)}\leq
\liminf_{n\rightarrow
          \infty}\sqrt[n]{\bb_{H_1 \du H_2}(n)} \leq
\CC_{H_1 \du H_2} =
\max_{i=1,2}(\CC_{H_1}, \CC_{H_2}).
$$
If $H = H_1 \du H_2$ is a disjoint union of two connected graphs $H_1$ and
$H_2$, then $\cc_{H_1}$ and $\cc_{H_2}$ exist by Proposition~\ref{prop:limit}.
Therefore all inequalities above are equalities and $\cc_H =
\max(\cc_{H_1},\cc_{H_2})$ exists. The general case of $H$ with multiple
components follows analogously via a simple induction on the number of components.
}
\end{proof}


\blue{
From Proposition~\ref{prop:limsup}, we also conclude that $\cc_H =
\cc_{K_{i_1}}$ for $H = K_{i_1} \du \cdots \du K_{i_m}$ as in the statement of
Theorem~\ref{t:limit_c}(ii), as for $l>m$ a graph with no $K_m$ clearly has no $K_l$ as induced subgraph. This also implies the existence of the values
$c'_{i_1}$.
}

\blue{
To finish the proof of Theorem~\ref{t:limit}(ii) we need to find or bound the
constants $c'_3$, $c'_4$ and $c'_5$ (clearly $c'_i = \cc_{K_i} = 1$, for $i \in
{1,2}$, realized by the empty graph). This we do in Section~\ref{sec:sup}.
}

%

\section{Polynomial growth for $H\leq K_{2,1,\ldots,1}$}\label{sec:clique}
We prove Theorem~\ref{thm:poly}. Clearly, if $H=K_d$ is a clique of size $d$, then all faces of $\cl(G)$ have
size $\leq d-1$ and so $b_{K_d}=O(n^{d-1})$. \blue{On the other hand, let us
  consider the Tur\'{a}n graph $T_{d,n}$ on $n$ vertices and without a $K_d$.
  This means that $T_{d,n} = K_{n_1, \dots, n_{d-1}}$ where $n_1, \dots,
  n_{d-1}$ are as equal as possible under the condition $n_1 + \cdots +
  n_{d-1} = n$. We have that $\cl(T_{d,n})$ has dimension $d-2$. Once we fix a
  vertex $v_i$ in the $i$th part of $T_{d,n}$ for $i \in \{1,\dots,d-1\}$,
  the $(d-2)$-faces avoiding the vertices $v_i$} form a basis of $H_{d-1}(\cl(T_{d,n});k)$; there are $\Omega(n^{d-1})$ such faces, thus \[b_{K_d}=\Theta(n^{d-1}).\]


The other case left to consider is $H=K_{d+1}^-$, the complete graph on $d+1$ vertices minus one edge. Then, any two simplices of $\cl(G)$ of dimension $\geq d-1$ intersect in a face of dimension at most $d-3$. Thus, an iterated application of the Mayer-Vietoris sequence (or one application of the Mayer-Vietoris spectral sequence) shows that the union of all simplices of dimension $\geq d-1$ in $\cl(G)$ is a complex with vanishing homology in dimensions $\geq d-1$. Thus, the entire complex $\cl(G)$ has vanishing homology in dimensions $\geq d-1$, and so $b_{K_{d+1}^-}(n)=O(n^{d-1})$.
As $K_d\leq K_{d+1}^-$ the lower bound provided by the Tur\'{a}n graph $T_{d,n}$ applies, and we conclude
\[b_{K_{d+1}^-}=\Theta(n^{d-1}).\]

\section{Subexponential growth for $H=C_4$}\label{sec:C_4}
We now prove Theorem~\ref{thm:C_4}. We start with the upper bound:
\begin{theorem}\label{thm:C_4UBT}
 $b_{C_4}(n)<n^{O(\log n)}$.
\end{theorem}

\begin{proof}
We show that if $G$ has no induced $C_4$ and $\cl(G)$ has nontrivial homology in dimension $d$, then $G$ must have many vertices, specifically at least 
\blue{$2^d$}.

Given a homology $d$-cycle $z$ in $\cl(G)$, let $z_0$ denote 
the vertex support of $z$.
For a subset $A$ of the vertices of $G$ let $G[A]$ denote the induced subgraph on $A$, and
$\cl(A):=\cl(G[A])$ for short. Let $N_G(v)$ denote the set of neighbors of $v$
in $G$ (excluding $v$).
For a vertex $v$ in $ z_0$ \blue{denote by} $\lk_{z}(v)$ the $(d-1)$-chain in
the complex $\cl(N_z(v))$ induced by the link map, where $N_z(v)$ is the set
$N_G(v)\cap z_0$; \blue{namely, for a homology $d$-cycle
  $z=\sum\alpha_{\sigma}\sigma$ where the sum is taken over all $d$-faces
$\sigma$ in $\cl(G)$, we have $\lk_{z}(v):=\sum \alpha_{\sigma} \sign
(v,\sigma) \sigma\setminus\{v\}$ where the sum is taken over all $d$-faces
$\sigma$ in $\cl(G)$ containing $v$.} \blue{Clearly}, $\lk_{z}(v)$ is a
cycle\blue{; indeed, the coefficient of any $(d-2)$-face
$\sigma\setminus\{v,u\}$ in the boundary $\partial \lk_{z}(v)$ equals the
coefficient of the $(d-1)$-face $\sigma\setminus\{u\}$ in $\partial z$, which
is zero}.

Now, define the following \blue{two} functions:
\begin{itemize}
\item $\beta(d)$ is the minimal number of vertices in the support of a nontrivial homological $d$-cycle $ z$ in $\cl(G)$, over all graphs $G$ \blue{with no induced $C_4$}; we will show that $\beta(d)\geq \blue{2^d}$.
\blue{
\item $\gamma(d)$ is the minimal cardinality of the set of vertices
$z_0\setminus (\{v\}\cup N_G(v))$ over all triples $(G,z,v)$ where $z$ is a nontrivial homological $d$-cycle in $\cl(G)$, $v$ is a vertex in $G$ and the graph $G$ has no induced $C_4$.
} 
\end{itemize}

\blue{We will show $\gamma(d) \geq 2 \gamma(d-1)$. Together with $\gamma(1) = 2$,
this gives $\beta(d) \geq \gamma(d) \geq 2^d$, and hence
$b_{C_4}(n)\leq n^{O(\log n)}$ as required.}

\blue{
It remains to show $\gamma(d) \geq 2 \gamma(d-1)$.
Let the triple $(G,z,v)$ realize $\gamma(d)$, thus the number of vertices in
$z_0\setminus (\{v\}\cup N_G(v))$ equals $\gamma(d)$. Choose $G$ to have the
minimal number of vertices, running over all such triples; in particular
$z_0\cup \{v\}$ is the vertex set of $G$.}

\blue{
First, we claim that for any vertex $u\in z_0\cap N_G(v)$, the $(d-1)$-cycle
$\lk_z(u)$ is nontrivial in $\cl(G(u))$ for $G(u)$ the induced subgraph $G[\lk_z(u)_0\cup\{v\}]$ of $G$.
Indeed, otherwise consider a $d$-chain $b$ that bounds $\lk_z(u)$ in $\cl(G(u))$. The $d$-cycle $z'=z-u*\lk_z(u)+b$ is homologous to $z$ in $\cl(G)$, hence the triple $(G-u,z',v)$ also realizes $\gamma(d)$, contradicting the minimality of the order of $G$.
We conclude that for every $u\in z_0\cap N_G(v)$ (if exists), the number of
vertices in $\lk_z(u)_0\setminus (\{v\}\cup N_{G(u)}(v))$ is at least
$\gamma(d-1)$.}

\blue{Next, we claim that there are at least two different vertices $u,w\in
  z_0\cap N_G(v)$ such that $uw$ is not an edge in $G$ (for $d\ge 1$). That
  $|z_0\cap N_G(v)|\ge d+1\ge 2$ follows from the definition of $\gamma(d)$ by
  considering any vertex in $z$. Assume by contradiction that $z_0\cap N_G(v)$
  forms a clique, and consider any vertex $v'\in z_0\cap N_G(v)$. The $d$-cycle
  $z'=z-v*\lk_z(v)+v'*\lk_z(v)$ in case $v\in z_0$, and $z'=z$ if $v\notin
  z_0$, is homologous to $z$, so the triple $(G-v,z',v')$ must also realize
$\gamma(d)$, and contradicts the minimality of the order of $G$.}




\blue{Note that as $G$ has no induced $C_4$, the sets 
$\lk_z(u)_0\setminus (\{v\}\cup N_{G(u)}(v))$
and $\lk_z(w)_0\setminus (\{v\}\cup N_{G(w)}(v))$ 
are disjoint;
combining with the last two claims gives 
\[\gamma(d)=|z_0\setminus (\{v\}\cup N_G(v))|\ge 
|\lk_z(u)_0\setminus (\{v\}\cup N_{G(u)}(v))|+
|\lk_z(w)_0\setminus (\{v\}\cup N_{G(w)}(v))|\ge
2\gamma(d-1).\]}
\end{proof}

We now turn to the lower bound. \blue{For any prime $p$, let $G_p$ denote the}
\blue{incidence} graph of the \blue{finite} projective plane of order $p$, \blue{that is, its vertices correspond to the $1$- and $2$-dimensional linear subspaces of $\mathbb{Z}_p^3$ and its edges correspond to strict containment relations between them.
Note that $G_p$ is}
bipartite, connected, with no $C_4$, it has
$v/2=p^2+p+1$ vertices on each side and $e=(p+1)(p^2+p+1)$ edges;
\blue{see for example~\cite[Example 19.7]{vLW} or~\cite{Matousek-Nesetril}.}
Thus, its
total Betti number equals the dimension of the first homology group, which
equals $e-v+1$ \blue{(this is the number of edges outside an arbitrary spanning
tree)}.  Given $n$, add some isolated vertices to the above graph where
$p$ is the largest prime for which $n/2 \geq p^2+p+1$, to obtain a graph $G$
with $n$ vertices. As clearly $n/2 <(2p)^2+2p+1$, we conclude that $\cl(G)$ has
total Betti number of order $\Omega(n^{3/2})$. Thus:

\begin{corollary}\label{cor:C_4}
There exists a \blue{positive} constant $c$ such that for any $n$, $b_{C_4}(n)>cn^{3/2}$.
\end{corollary}

Combining Theorem~\ref{thm:C_4UBT} and Corollary~\ref{cor:C_4} gives Theorem~\ref{thm:C_4}.

\section{Comparing the exponential growth for graphs $I_3 \leq H \leq K_{5,5,\dots}$}\label{sec:sup}


In this section we again work in complementary setting, as described in the
Preliminaries. \blue{This means that the estimates on $b(H)$ for $I_3 \leq H
\leq K_{5,5,\dots}$ translate as estimates on $\bb(H)$ for $K_3 \leq H \leq K_5
\du K_5 \du \cdots$. That is, we focus on the graphs $H = K_{i_1} \du \cdots
\du K_{i_m}$ with $i_1 \geq i_2 \geq \cdots \geq i_m$ and $i_1 \in \{3,4,5\}$.
(The case $i_1=4$ is deferred to Appendix A.)}

\subsection{$K_5$-free graphs}

Recall \blue{from the Preliminaries} that $\G_5$ denotes the class of $K_5$-free graphs, namely, it
consists of the graphs with no induced $K_5$. In this case, the upper bound on the homology growth can be improved from $\Theta_4^n$ to $\Theta_3^n$, which is tight.

\begin{proposition}
\label{p:no_K5}
$\bb(\G_5;n) \leq \Theta_3^n$.
\end{proposition}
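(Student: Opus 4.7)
The plan is induction on $n$, following Adamaszek's $\Theta_4^n$ argument for arbitrary graphs but sharpened via the $K_5$-freeness hypothesis. The base $n=0$ is trivial. For the step, since $\bb(G)=0$ whenever $G$ has an isolated vertex, I may assume that the minimum degree of $G$ is some $d\geq 1$. Fix a minimum-degree vertex $v$ and order its neighbors $v_1,\dots,v_d$ arbitrarily. Applying Lemma~\ref{l:recurrent_simp} together with the induction hypothesis on each induced subgraph $G - N[v_i] - \{v_1,\dots,v_{i-1}\}$ (still in $\G_5$, with at most $n-d-1-a_i$ vertices by $\deg(v_i)\geq d$) yields
\[
\bb(G)\;\leq\;\sum_{i=1}^d \Theta_3^{\,n-d-1-a_i}\;=\;\Theta_3^{\,n-d-1}\sum_{i=1}^d 3^{-a_i/4},
\]
where $a_i := |\{v_j:j<i,\ v_j \notin N[v_i]\}|$. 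It therefore suffices to prove
\[
\sum_{i=1}^d 3^{-a_i/4}\;\leq\;\Theta_3^{\,d+1}\;=\;3^{(d+1)/4}.
\]

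For $d\neq 4$ the crude bound $\sum_i 3^{-a_i/4}\leq d$ (each term is at most $1$) already suffices: via the identity $\Theta_d^{d+1}=d$, the required inequality becomes $\Theta_d\leq \Theta_3$, which holds because the sequence $\{\Theta_d\}$ is maximized at $d=4$ with $\Theta_3$ the next-largest value (for $d\geq 5$ this is equivalent to $d^4\leq 3^{d+1}$, verified at $d=5$ and preserved inductively since $3^{d+1}/d^4$ is increasing there). The only obstruction is $d=4$, where $\Theta_4>\Theta_3$ and this crude estimate fails by a thin margin; this is precisely the case that requires $K_5$-freeness.

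When $d=4$, the induced neighborhood $G[N(v)]$ is $K_4$-free on $4$ vertices (else $v$ together with such a $K_4$ would form a $K_5$ in $G$), and so is not complete. Since $\sum_i a_i$ equals the number of non-edges in $G[N(v)]$, we get $\sum_i a_i\geq 1$. As $x\mapsto 3^{-x/4}$ is convex and decreasing, the sum $\sum_{i=1}^4 3^{-a_i/4}$ is maximized over nonnegative tuples with $\sum a_i\geq 1$ at a permutation of $(1,0,0,0)$, giving the upper bound $3+3^{-1/4}$. Writing $x=3^{1/4}$, the remaining inequality $3+3^{-1/4}\leq 3^{5/4}$ is equivalent (multiplying through by $x$) to $3x^2-3x-1\geq 0$, which holds since $x\approx 1.316$ exceeds the positive root $(3+\sqrt{21})/6\approx 1.263$ of $3t^2-3t-1$. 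The main technical point is exactly this tight margin at $d=4$: the single non-edge forced by $K_4$-freeness of a graph on four vertices closes the gap, and no more delicate structural analysis of the neighborhood is needed.
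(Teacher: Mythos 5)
Your proof is correct and takes essentially the same route as the paper: induction on $n$ via Lemma~\ref{l:recurrent_simp}, disposing of $d\neq 4$ by the crude bound $\Theta_d\leq\Theta_3$, and in the case $d=4$ using $K_5$-freeness to force a missing edge among the four neighbors (your $\sum a_i\geq 1$), which after reordering yields exactly the paper's estimate $3\Theta_3^{n-5}+\Theta_3^{n-6}\leq\Theta_3^n$. The only difference is cosmetic — you package the missing-edge argument in the $a_i$/convexity language and verify the final numerical inequality in the equivalent form $3x^2-3x-1\geq 0$ rather than $3x+1\leq x^6$ (the two coincide since $x^4=3$).
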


%

\begin{proof}
Let $G$ be a $K_5$-free graph with $n$ vertices.
%
The proof is by induction on $n$. The base case $n=0$ trivially holds as
$\bb(\emptyset)=1$. We may also assume that $G$ does not contain an isolated
vertex otherwise $\bb(G) = 0$.

Let $d$ be the minimum degree of $G$. If $d \neq 4$, then the same computation
as in Equation~\eqref{e:theta_4_bound} gives
(note that
$\Theta_3 \geq \Theta_d$ if $d \neq 4$; see Table~\ref{tab:theta_gamma}):
$$
  \bb(G) \leq \blue{d \cdot } \Theta_3^{n-d-1} =
\Theta_3^n\frac{d}{\Theta_3^{d+1}} \leq
\Theta_3^n\frac{d}{\Theta_d^{d+1}} = \Theta_3^n.
$$

It remains to consider the case $d = 4$. That is, $v$ has neighbors $v_1,
\dots, v_4$. As $G$ is $K_5$ free, there is at least one missing edge among
these neighbors. For simplicity, we can assume that this missing edge is $v_1v_2$
since we can choose the order of the neighbors of $v$.
We deduce that $n_i \leq n - d - 1 = n -
5$ for $i \in \{1, 2, 3, 4\}$
as usual, where $n_i$ is the number of vertices of
$G - N[v_i] - \{v_1,
\dots, v_{i-1}\}$.
 However, in addition, we can deduce that $n_2 \leq n-6$ because
$N[v_2]$ does not contain $v_1$. Therefore,
Lemma~\ref{l:recurrent_simp} gives (using that $f(n)=\Theta^n$ is increasing)
$$
\bb(G) \leq 3\Theta_3^{n-5} + \Theta_3^{n-6} = \Theta_3^n
\Theta_3^{-6}(3\Theta_3 + 1).
$$

Now, the equation $3x + 1 = x^6$ has a root $x_0 \doteq 1.3038 < \Theta_3$ (this is the
only root on $[1,2]$), and therefore it is easy to deduce that $3\Theta_3 + 1
\leq \Theta_3^6$ as $\Theta_3 \geq x_0$ (one can also put directly $3\Theta_3 +
1$ and $\Theta_3^6$ into a calculator).
This gives the desired bound $\bb(G) \leq \Theta_3^n$.
\end{proof}

The bound provided by Proposition~\ref{p:no_K5} is tight for $n$ divisible by $4$, as the disjoint union of $n/4$ copies of $K_4$ shows. For $n$ not divisible by $4$ change the sizes of one or two components such that each of them have size $>1$, to conclude $\bb(\G_5;n)\geq \frac{2}{9}\Theta_3^n$.
Thus, we get the following corollary; \blue{following the notation of
Theorem~\ref{t:limit_c}(ii).}
\begin{corollary}\label{cor:I_5}
  $\blue{c'_5 = \cc_{K_5}(n)=\Theta_3}.$
\end{corollary}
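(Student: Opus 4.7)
The plan is to combine Proposition~\ref{p:no_K5} with the explicit construction noted immediately before the corollary statement. Passing to the complementary setting, we have $b_{I_5}(n) = \bb(\G_5;n)$, because a graph $G$ contains no induced $I_5$ if and only if its complement contains no induced $K_5$. So the existence and value of $c_{I_5}$ reduces entirely to estimating $\bb(\G_5;n)$ from both sides and taking $n$-th roots.

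For the upper bound, Proposition~\ref{p:no_K5} gives $\bb(\G_5;n) \leq \Theta_3^n$, hence $\limsup_n \sqrt[n]{b_{I_5}(n)} \leq \Theta_3$. For the lower bound, take $G_n$ to be a disjoint union of copies of $K_4$ (with possibly one or two cliques of size $2$ or $3$ substituted in to absorb the remainder $n \bmod 4$; the restriction to size $\geq 2$ is needed because an isolated vertex would kill the Betti number). Since $K_k$ is $K_5$-free for $k \leq 4$ and $K_5$-freeness is preserved under disjoint union, $G_n \in \G_5$. The independence complex of $K_k$ is a set of $k$ isolated points, so $\bb(K_k) = k-1$; iterating Lemma~\ref{l:du} gives $\bb(G_n) \geq \frac{2}{9} \Theta_3^n$ uniformly in $n$, with equality $\bb(G_n) = 3^{n/4} = \Theta_3^n$ when $4 \mid n$. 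Taking $n$-th roots, $\liminf_n \sqrt[n]{b_{I_5}(n)} \geq \Theta_3$.

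Combining the two bounds shows that the limit exists and $c_{I_5} = \Theta_3$. There is no real obstacle here: the upper-bound content is already in Proposition~\ref{p:no_K5}, and on the lower-bound side the only minor bookkeeping concerns the non-divisible remainders, which contribute only a constant factor that vanishes after extracting $n$-th roots.
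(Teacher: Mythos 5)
Your proof is correct and is essentially the paper's own argument: the upper bound is Proposition~\ref{p:no_K5}, and the lower bound is the disjoint union of $K_4$'s (adjusting one or two component sizes, each kept $\geq 2$, for $n\not\equiv 0 \pmod 4$), which via Lemma~\ref{l:du} gives $\bb(\G_5;n)\geq \tfrac{2}{9}\Theta_3^n$ and hence the limit $\Theta_3$ by squeezing. No discrepancies to report.
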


\subsection{$mK_5$-free graphs}
Let $mK_5$ denote the disjoint union of $m$ copies of $K_5$.
By Theorem~\ref{t:limit}(ii) and Proposition~\ref{p:no_K5} we already know that
for any \blue{$I_5\leq H\leq K_{5,5,\dots}$}, $c_H=\Theta_3$. Here we refine
the upper bound on $\bb_{mK_5}(n)$, as asserted in
\blue{Theorem~\ref{thm:I_3<H_c}}.

\begin{proof}[Proof of Theorem~\ref{thm:I_3<H_c}]
We prove the result by a double induction. The outer induction is in $m$, the
inner induction is in $n$. The case $m = 1$ was proved in the previous \blue{sub}section,
thus we can assume $m \geq 2$.

First, let us assume that $G$ contains $k$ isolated copies of $K_5$ for some
$k > 0$. Let $G'$ be $G$ without these
 copies. Note that $k \leq m-1$ and that $G'$ is $(m-k)K_5$-free.
  Then
 $$
 \bb(G) = \bb(G')\bb(K_5)^k \leq \left(4^{m-k-1}\Theta_3^{n - 5k -
 5(m-k-1)}\right) \cdot 4^k
 $$
 where the equality follows from Lemma~\ref{l:du}
 and the
inequality follows from the induction and from $\bb(K_5) = 4$. That is, $\bb(G)
\leq 4^{m-1}\Theta_3^{n-5(m-1)}$ as desired.

 If $G$ does not contain an isolated copy of $K_5$ then we proceed analogously
 as in the previous subsection. We let $d$ be the minimum degree of $G$ and we
 consider a vertex $v$ of degree $d$ and its neighbors.

 If $d \neq 4$, then Lemma~\ref{l:recurrent_simp} implies
 \begin{align*}
   \bb(G) &\leq d\cdot 4^{m-1} \Theta_3^{n-d-1 - 5(m-1)} =
4^{m-1} \Theta_3^{n-5(m-1)}\frac{d}{\Theta_3^{d+1}}\\
&\leq
4^{m-1} \Theta_3^{n-5(m-1)}\frac{d}{\Theta_d^{d+1}} = 4^{m-1}
\Theta_3^{n-5(m-1)}.
\end{align*}


Now, let us assume that $d = 4$. Since we assume that $G$ has no isolated
$K_5$, we either miss some edge among
the neighbors $v_1, \dots, v_4$, or the degree of some of the vertices $v_1,
\dots, v_4$ is greater than $4$. In both cases, Lemma~\ref{l:recurrent_simp}
provides us with a bound

\begin{align*}
  \bb(G) &\leq 3\cdot4^{m-1}\Theta_3^{n-5-5(m-1)} + 4^{m-1}\Theta_3^{n-6-5(m-1)} = 4^{m-1}\Theta_3^{n-5(m-1)}
\Theta_3^{-6}(3\Theta_3 + 1) \\
&\leq 4^{m-1}\Theta_3^{n-5(m-1)}
\end{align*}
as wanted. (Here we again use the inequality $3\Theta_3 + 1 \leq \Theta_3^6$ explained at the end of the proof of
Proposition~\ref{p:no_K5}.)
\end{proof}

\subsection{$K_3$-free graphs}\label{sec:[3]}

We recall that it was explained in the Preliminaries how to get Adamaszek's bound
\blue{$c'_3 = \cc_{K_3} = c_{I_3}\leq \Gamma_3$}.
We aim to get an improved bound \blue{$c'_3 \leq \Gamma_4$}.
The idea behind the improvement is that a more detailed combinatorial analysis
of $N[v_i]$, in the setting of Lemma~\ref{l:recurrent_simp}, reveals one of the following three options. Either $d
\neq 3$ and we can use a bound with $\Gamma_4$, or $d = 3$ and $v$ can be
chosen so that some of the neighbors of $v$ has degree at least $4$ which again
improves the bound, or, finally (assuming connectedness), $G$ is a cubic graph
which means that $N[v_i]$ is a $2$-degenerate graph, which again
yields improving the bound.

Before addressing general triangle free graphs,
 it is useful first to give an upper bound on $\bb(G)$ for triangle-free graphs $G$ which are $2$-degenerate.

\subsubsection{$2$-degenerate triangle free graphs}
Let $\D_k$ be the class of $k$-degenerate graphs, that is graphs,
 such that for every $G$ in $\D_k$ and for
 every (induced) subgraph $G'$ of $G$, the minimum degree of $G$ is at most $k$.

 \begin{proposition}
   \label{p:D2}
   Let $G \in \D_2$ be a triangle-free graph on $n$ vertices. Then
$$
\sqrt[n]{\bb(G)} \leq \Gamma_2 \doteq 1.2207.
$$
 \end{proposition}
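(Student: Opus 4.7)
The plan is to prove the bound by induction on $n$, using Lemma~\ref{l:recurrent_simp} together with the fact that the class of triangle-free $2$-degenerate graphs is closed under taking induced subgraphs, so the inductive hypothesis applies to every graph produced by deleting a closed neighborhood (or a closed neighborhood together with some extra vertices). The base case $n = 0$ is immediate since $\bb(\emptyset) = 1 = \Gamma_2^0$. For the inductive step, if $G$ has an isolated vertex then $\bb(G) = 0$ and we are done, so we may assume that the minimum degree $d$ of $G$ satisfies $d \geq 1$; since $G \in \D_2$, we also have $d \leq 2$.

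The main work is to play off $2$-degeneracy against triangle-freeness in the standard recurrence. Let $v$ be a vertex of degree $d$ with neighbors $v_1,\ldots,v_d$, chosen in some order, and set $G_i := G - N[v_i] - \{v_1,\ldots,v_{i-1}\}$. Since each $v_i$ again has degree at least $d$, we have $|N[v_i]| \geq d+1$, and since $G$ is triangle-free no two of $v_1,\ldots,v_d$ are adjacent, so $\{v_1,\ldots,v_{i-1}\} \cap N[v_i] = \emptyset$. Therefore
\[
|V(G_i)| \leq n - (d+1) - (i-1) = n - d - i.
\]
Each $G_i$ is an induced subgraph of $G$, hence still $2$-degenerate and triangle-free, so by the inductive hypothesis $\bb(G_i) \leq \Gamma_2^{n-d-i}$, and Lemma~\ref{l:recurrent_simp} gives
\[
\bb(G) \leq \sum_{i=1}^{d} \Gamma_2^{n-d-i}.
\]

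It now suffices to check that the right-hand side is at most $\Gamma_2^n$ in each of the two cases $d = 1$ and $d = 2$. For $d = 1$ this is immediate since $\Gamma_2 \geq 1$ yields $\Gamma_2^{n-2} \leq \Gamma_2^n$. The interesting case is $d = 2$, which gives
\[
\bb(G) \leq \Gamma_2^{n-3} + \Gamma_2^{n-4} = \Gamma_2^{n-4}\bigl(\Gamma_2 + 1\bigr),
\]
so the required bound reduces to the algebraic identity $\Gamma_2 + 1 \leq \Gamma_2^4$. This is precisely the defining equation of $\Gamma_2$: by the definition in the Preliminaries, $\Gamma_2$ is the unique root in $[1,2]$ of $x^{2\cdot 2} - x - 1 = x^4 - x - 1 = 0$, so $\Gamma_2^4 = \Gamma_2 + 1$ and the inductive step closes with equality.

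There is no real obstacle; the only point to be careful about is that both ingredients used in the size bound $|V(G_i)| \leq n - d - i$, namely $\deg(v_i) \geq d$ and the disjointness of $\{v_1,\ldots,v_{i-1}\}$ from $N[v_i]$, are available simultaneously because we are in a triangle-free graph. The degeneracy assumption is used exactly once, to force $d \leq 2$ so that the defining equation of $\Gamma_2$ (which has degree pattern matching $d=2$) is precisely the identity that closes the recursion.
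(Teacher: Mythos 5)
Your proof is correct and follows essentially the same route as the paper: induction on $n$ via Lemma~\ref{l:recurrent_simp}, using $2$-degeneracy only to force $d\leq 2$, triangle-freeness for the disjointness of $\{v_1,\dots,v_{i-1}\}$ from $N[v_i]$, closure of the class under induced subgraphs, and the defining identity $\Gamma_2^4=\Gamma_2+1$ to close the $d=2$ case.
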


The bound $\Gamma_2$ is very probably not an optimal one in this case. However,
it is sufficient for our purposes.

\begin{proof}
  The proof is essentially the same as the proof of Adamaszek's bound for triangle free graphs
  using, in addition, the fact that the minimum degree is at most $2$.
Assume $G$ has no isolated vertex, else the assertion is trivial, as $\bb(G)=0$ in this case.

  Let $v$ be a vertex of minimum degree $d$ and $v_1$ and $v_2$ (or just $v_1$)
  be its neighbors. If $d = 2$, Lemma~\ref{l:recurrent_simp} yields
  $$
  \bb(G) \leq \Gamma_2^{n-3} + \Gamma_2^{n-4} =
  \Gamma_2^n\Gamma_2^{-4}(\Gamma_2 + 1) = \Gamma_2^n.
  $$
In the induction, we crucially use that the subgraphs
$G - N[v_1]$ and $G - N[v_2] - v_1$ are also triangle-free
 graphs in $\D_2$.


  If $d = 1$, we even get $\bb(G) \leq \Gamma_2^{n-2} < \Gamma_2^n$ from
  Lemma~\ref{l:recurrent_simp}.
\end{proof}

  \subsubsection{General triangle-free graphs}
Here we prove the promised bound, namely,
\begin{proposition}
Let $G$ be a triangle-free graph on $n$ vertices. Then
$$
\bb(G) \leq \Gamma_4^n.
$$
 \end{proposition}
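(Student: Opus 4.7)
The plan is to prove the bound by induction on $n$, with the trivial base case $n=0$ given by $\bb(\emptyset)=1$. For the inductive step I would first reduce to connected $G$: if $G$ is disconnected then by Lemma~\ref{l:du}, $\bb(G)$ factors over the components and the bound follows multiplicatively from the inductive hypothesis. I may also discard isolated vertices, since $\bb(G)=0$ in that case. The argument then splits on the minimum degree $d$ of $G$.

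For $d\neq 3$, the approach is a direct application of Lemma~\ref{l:recurrent_simp} at a minimum-degree vertex $v$: in the triangle-free setting the sets $N[v_i]$ and $\{v_1,\dots,v_{i-1}\}$ are disjoint, so $n_i\le n-d-i$, and the inductive hypothesis together with the defining identity $\Gamma_d^{2d}=1+\Gamma_d+\cdots+\Gamma_d^{d-1}$ gives $\bb(G)\le\Gamma_d^n$. Since $\Gamma_d\le\Gamma_4$ for every $d\neq 3$ (with equality at $d=4$), this yields $\bb(G)\le\Gamma_4^n$ at once.

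The substantive case is $d=3$, which I plan to split into two subcases. In subcase~(a) I assume there exists a minimum-degree vertex $v$ with at least one neighbor of degree $\geq 4$; I label this neighbor $v_1$ and the remaining neighbors $v_2,v_3$. By triangle-freeness $v_1\notin N[v_2]$ and $v_1,v_2\notin N[v_3]$, so Lemma~\ref{l:recurrent_simp} yields $n_1\le n-5$, $n_2\le n-5$, $n_3\le n-6$, and the inductive hypothesis gives $\bb(G)\le 2\Gamma_4^{n-5}+\Gamma_4^{n-6}$. This is $\le\Gamma_4^n$ since $2\Gamma_4+1\le\Gamma_4^6$, as a direct numerical check shows. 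Subcase~(b) is the case in which every vertex of degree $3$ has only degree-$3$ neighbors; since $G$ is connected with minimum degree $3$ this forces $G$ to be cubic.

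The main obstacle is subcase~(b): a single application of Lemma~\ref{l:recurrent_simp} only yields $\Gamma_4^{n-4}+\Gamma_4^{n-5}+\Gamma_4^{n-6}$, which slightly exceeds $\Gamma_4^n$. The key idea is to nest the recurrence once. For $i\in\{1,2\}$ the subgraph $H_i=G-N[v_i]-\{v_1,\dots,v_{i-1}\}$ still contains some $v_j$ with $j>i$ (by triangle-freeness $v_j\notin N[v_i]$), and since $G$ is cubic this $v_j$ has lost its edge to $v$ and hence has degree at most $2$ in $H_i$. Thus each of $H_1,H_2$ has minimum degree at most $2$, and applying Lemma~\ref{l:recurrent_simp} again, in conjunction with the inductive hypothesis, gives $\bb(H_1)\le\Gamma_4^{n-7}+\Gamma_4^{n-8}$ and $\bb(H_2)\le\Gamma_4^{n-8}+\Gamma_4^{n-9}$, while the inductive hypothesis directly gives $\bb(H_3)\le\Gamma_4^{n-6}$. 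Summing and using the defining identity $\Gamma_4^8=1+\Gamma_4+\Gamma_4^2+\Gamma_4^3$, the required inequality $\bb(G)\le\Gamma_4^n$ reduces to $\Gamma_4+1\le\Gamma_4^4$, which is easily verified. The delicate point of the whole proof is precisely this final estimate: the savings extracted from the degree-$2$ remnants of $N(v)$ just barely close the gap left by the single-step recurrence.
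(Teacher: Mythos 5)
Your proposal is correct in outline, and for $d\neq 3$ and for the non-cubic $d=3$ subcase it coincides with the paper's argument (the paper likewise picks a degree-$3$ vertex with a neighbour of degree $\geq 4$ and uses $2\Gamma_4+1\leq\Gamma_4^6$). Where you genuinely diverge is the cubic subcase. The paper does not iterate the recurrence there: it observes that each $G^i=G-N[v_i]-\{v_1,\dots,v_{i-1}\}$ is a \emph{proper} subgraph of a connected cubic graph, hence $2$-degenerate, and invokes a separately proved bound $\bb\leq\Gamma_2^m$ for $2$-degenerate triangle-free graphs (Proposition~\ref{p:D2}); the gain is in the \emph{base}, since $\Gamma_2^{-4}+\Gamma_2^{-5}+\Gamma_2^{-6}$ times $\Gamma_2^n$ is below $\Gamma_4^n$ once $n\geq 7$. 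Your version keeps the base $\Gamma_4$ and gains in the \emph{exponents} by nesting the recurrence once, and the arithmetic does close: $\Gamma_4^{-6}+\Gamma_4^{-7}+2\Gamma_4^{-8}+\Gamma_4^{-9}\approx 0.979<1$, equivalently $\Gamma_4+1\leq\Gamma_4^4$ as you reduce it. This is a legitimate alternative that avoids the auxiliary induction over $\D_2$, at the price of a tighter numerical margin.

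Two points must be made explicit for your cubic case to stand. First, the bound $\bb(H_1)\leq\Gamma_4^{n-7}+\Gamma_4^{n-8}$ does \emph{not} follow from applying Lemma~\ref{l:recurrent_simp} at the particular vertex $v_j$ you identified: its two neighbours in $H_1$ may have degree $1$ there, in which case that application only yields $\Gamma_4^{n-6}+\Gamma_4^{n-7}$, and then the total is $2\Gamma_4^{-6}+2\Gamma_4^{-7}+\Gamma_4^{-8}\approx 1.15>1$ and the proof fails. You must apply the lemma at a vertex realizing the \emph{minimum} degree $\delta\leq 2$ of $H_1$ (so that its neighbours also have degree $\geq\delta$) and treat $\delta\in\{0,1,2\}$ separately: $\delta=2$ gives your exponents, while $\delta=0$ gives $0$ and $\delta=1$ gives $\Gamma_4^{n-6}$, which is at most $\Gamma_4^{n-7}+\Gamma_4^{n-8}$ since $\Gamma_4^2\leq\Gamma_4+1$. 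Since you explicitly invoke the minimum degree this is presumably your intent, but as the naive reading fails numerically it needs to be spelled out. Second, the exponent $n-9$ is meaningless for small $n$: if a graph reached after two rounds of the recurrence is empty, its $\bb$ equals $1>\Gamma_4^{n-9}$ when $n\leq 8$, so the finitely many connected cubic triangle-free graphs on at most $8$ vertices must be checked directly, in the same way the paper disposes of $K_{3,3}$.
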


\begin{proof}
As usual, the proof is by induction on $n$, again $d$ is the minimum degree, $v$ is a vertex of the minimum degree and $v_1, \dots, v_d$ are its neighbors.

First, we can assume that $G$ is connected. Indeed, if $C_1, \dots, C_k$ are
the components of $G$ then we can deduce $\bb(G) \leq \Gamma_4^n$ from $\bb(G) =
\bb(C_1)\cdots\bb(C_k)$ (see Lemma~\ref{l:du}) and from the induction.

If $d \neq 3$, then we deduce
$$
\bb(G) \leq \Gamma_4^n
$$
from induction analogously to the computations in the proof of Proposition~\ref{p:no_K5}.
Indeed
  \blue{
$$
\bb(G) \leq \sum\limits_{i=1}^{d}\Gamma_4^{n-i-d} =
\Gamma_4^n\sum\limits_{i=1}^{d} \Gamma_4^{-i-d} \leq
\Gamma_4^n\sum\limits_{i=1}^{d} \Gamma_d^{-i-d}=
\Gamma_4^n \Gamma_d^{-2d}(1 + \Gamma_d + \cdots + \Gamma_d^{d-1}) =
\Gamma_4^n.
$$
}
The first inequality follows from the induction analogously to
Eq.~\eqref{e:rec_triangle_free}. The last equality follows from the definition
of $\Gamma_d$ \blue{via~\eqref{e:Gamma}}.
Also note that $\Gamma_4$ is the largest value among $\Gamma_d$, with $d \neq 3$.

It remains to consider the case $d=3$.
We will distinguish two subcases.

In the first subcase, $G$ is not a cubic graph ($3$-regular). That means, it
contains two vertices, one of them of degree $3$ and the second one of degree
greater than $3$. Thus we can adjust our choice of $v$ and its neighbors $v_1,
v_2, v_3$ so that the degree of $v_1$ is at least $4$. This means that $n_1
\leq n-5$, $n_2 \leq n-5$, and $n_3 \leq n-6$, as there is no edge between
$v_1,v_2,v_3$. Lemma~\ref{l:recurrent_simp}
now gives a bound
$$
\bb(G) \leq \Gamma_4^n(\Gamma_4^{-5} + \Gamma_4^{-5} + \Gamma_4^{-6}) =
\Gamma_4^n \Gamma_4^{-6}(2\Gamma_4 + 1)
.
$$
The equation $2x + 1 = x^6$ has a unique solution $x_1 \doteq 1.2298$ on
$[1,2]$ and we can deduce that $\bb(G) \leq \Gamma_4^n$ since $\Gamma_4 \geq
x_1$; see Table~\ref{tab:theta_gamma}.

In the second subcase we assume that $G$ is a (connected) cubic graph.
In this subcase, we will not save the value on the exponents, but we will save it on the bases.
More concretely, in this case we crucially use that the graphs $G_i -
N[v_i] - \{v_1,\dots,v_{i-1}\}$ belong to $\D_2$ since they are proper subgraphs of a
connected cubic graph. Therefore, we can use Proposition~\ref{p:D2} and
together with Lemma~\ref{l:recurrent_simp}, for $n \geq 7$, we deduce
$$
\bb(G) \leq \Gamma_2^n(\Gamma_2^{-4} + \Gamma_2^{-5} + \Gamma_2^{-6}) \leq
\Gamma_4^n.
$$
It is easy to check that for $n \leq 6$, the only possible cubic triangle-free
graph is $K_{3,3}$. In this case $\bb(K_{3,3}) = 1$ and the required inequality is satisfied as well.
\end{proof}

\section{Concluding remarks}\label{sec:conclude}

As mentioned in the Introduction, we still do not know whether there exists a graph $H$ for which $b_H(n)$ grows subexponentially and superpolynomially. See  Question~\ref{q:poly} for the candidates for such $H$.



The computation of $b_H(n)$ reduces to graphs with \emph{exactly} $n$ vertices:

\emph{Monotonicity}.
By definition, for any graph $H$ clearly $b_H(n)$ is weakly increasing.
Let $b^=_H(n)$ be the maximum total Betti number among all graphs with no induced copy of $H$ and with \emph{exactly} $n$ vertices. In fact,
\begin{obs}\label{q:monotonicity}
For any graph $H$, the function $b^=_H(n)$ is weakly increasing in $n$.
\end{obs}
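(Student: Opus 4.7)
The plan is to take an $H$-free graph $G$ on $n$ vertices with $\bb(G)=b^=_H(n)$ and construct an $H$-free graph $G'$ on $n+1$ vertices with $\bb(G')\geq \bb(G)$. The natural first attempt is to set $G':=G\du \{v\}$, i.e.\ to add an isolated vertex. Since $\cl(G')=\cl(G)\du \{v\}$, the new component adds one to $\dim\widetilde{H}_0$ and leaves all other reduced Betti numbers unchanged, so $\bb(G')=\bb(G)+1$ whenever $G$ is non-empty. The only way this can fail to be $H$-free is if some induced copy of $H$ in $G'$ uses the isolated vertex $v$; since $v$ has no neighbours, such a copy would force $H$ to have an isolated vertex $u_0$ and $G$ to already contain an induced copy of $H-u_0$. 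Whenever this obstruction does not arise, the construction already gives the desired $G'$.

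It remains to handle the exceptional case: $H$ has an isolated vertex $u_0$ and $G$ contains an induced copy of $H-u_0$. Here I would switch to a vertex-duplication construction. Pick a vertex $w\in V(G)$ and add a new vertex $v$ that is adjacent to $w$ and to every neighbour of $w$ in $G$, i.e.\ make $v$ a \emph{true twin} of $w$. Then $v$ is dominated by $w$ in the flag complex $\cl(G')$: every clique through $v$ is also a clique after $v$ is replaced by $w$. By the standard dominated-vertex deformation retraction, $\cl(G')$ collapses onto $\cl(G)$, so $\bb(G')=\bb(G)$. Moreover, any induced copy of $H$ in $G'$ that uses only one of $w,v$ can be pulled back to an induced copy of $H$ in $G$ by swapping the used twin for the other, contradicting $H$-freeness of $G$; hence a bad copy must use both $w$ and $v$, and then $w,v$ must play the role of a pair of true twins in $H$.

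The final step is to show that in the exceptional regime one can choose $w$ (or, if necessary, replace $G$ by another extremal $H$-free graph on $n$ vertices without decreasing $\bb$) so that the true-twin construction produces no induced $H$. The main obstacle I anticipate is the combined edge case where $H$ has simultaneously an isolated vertex and a pair of true twins; cleanly eliminating this case may require a third auxiliary construction — for example, attaching a pendant vertex to $G$, which preserves $\bb$ via an elementary collapse of the newly created free edge and creates an induced $H$ only under much more restrictive structural conditions on $H$ — together with a structural argument exploiting the rigidity forced on an extremal $G$ that is $H$-free yet contains induced $H-u_0$ (in particular, that every outside vertex is adjacent to some vertex of every induced copy of $H-u_0$ in $G$).
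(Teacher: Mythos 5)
Your two constructions are exactly the ones the paper uses: add an isolated vertex when this cannot create an induced $H$ (which increases the total Betti number by one), and otherwise add a true twin $v$ of some $w\in G$, which preserves the total Betti number since $\cl(G')$ deformation retracts onto $\cl(G)$. Your analysis of when each step can fail (the isolated vertex fails only if $H$ has an isolated vertex $u_0$ with $H-u_0\leq G$; a bad copy in the twin construction must use both $v$ and $w$) is also correct and matches the paper. However, the proof is not complete: the decisive step --- showing that in the exceptional regime one of the constructions actually succeeds --- is precisely what you leave open, offering only a speculative ``third auxiliary construction'' (pendant vertex) and an unexecuted ``structural argument''. As written, the case where $H$ has an isolated vertex, $G$ contains an induced $H-u_0$, and the twin construction also produces an induced $H$ is simply not handled, so monotonicity is not established.

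The paper closes this case with no third construction and no special choice of $w$: it plays the two candidates against each other on the \emph{same} added vertex. Writing $G'$ for the true-twin graph and $G''=G\sqcup v$ for the disjoint-union graph, either $G'$ is $H$-free (done, with $b^=_H(n+1)\geq b^=_H(n)$), or every induced copy of $H$ in $G'$ contains both $v$ and $w$ --- your own swapping argument --- and the paper then argues that in this situation $H\nleqslant G''$, so $G''$ is the desired $H$-free graph on $n+1$ vertices with total Betti number $b^=_H(n)+1$. In other words, the goal is not to exhibit one construction that always works but to show the two cannot both fail for the extremal $G$. Your closing remark that every vertex of $G$ outside an induced copy of $H-u_0$ must be adjacent to it (else $G$ would contain an induced $H$) is indeed the relevant ingredient for relating the two constructions, but you never assemble it into the implication ``twin construction fails $\Rightarrow$ isolated-vertex construction succeeds'', which is the heart of the proof.
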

\begin{proof}
First note that when adding to $G$ an isolated vertex $v$, the total Betti number of $\cl(G\sqcup v)$ is one more than of $\cl(G)$, where the $0$th Betti number is increased by one. Thus, the result holds for $H$ with no isolated vertex. Next, for $H=H'\sqcup u$, $G$ a maximizer of $b^=_H(n)$ and a vertex $w\in G$, let $G'$ be obtained from $G$ by adding a new vertex $v$ and connecting it to $w$ and all neighbors of $w$. Then $\cl(G')$ deformation retracts to $\cl(G)$, so they have the same total Betti number.
If $H\leq G'$ then any induced copy of $H$ in $G'$ must contain $v$ and $w$; but then for $G"=G\sqcup v$ we get $H\nleqslant G"$, and again the total Betti number of $\cl(G")$ is one more than of $\cl(G)$.
\end{proof}

\section*{Acknowledgment}
We are very thankful to the anonymous referee for many valuable remarks.

\bibliographystyle{myamsalpha}
\bibliography{betti}

\appendix

\section{$K_4$-free graphs}
\label{a:K4}

\paragraph{Preliminaries.}
We keep the notational standards introduced in Section~\ref{sec:prelim}. We
recall, that $N[v] = N_G[v]$ denotes the closed neighborhood of vertex $v$ in a
graph $G$, that is, the set of neighbors of $v$ together with $v$. We, however, modify the definition of the open neighborhood from
Section~\ref{sec:C_4}. Throughout the appendix we assume that $N(v) = N_G(v)$ is
the subgraph of $G$ induced by neighbors of $v$. That is, it is not only the
set of neighbors as in Section~\ref{sec:C_4}. (For further considerations of
the closed neighborhoods, it is not important whether we consider the subgraph
or just the set of vertices.
)

%
%
%

Since we plan to use Lemma~\ref{l:recurrent_simp} quite heavily, it pays off to
set up certain additional notational conventions. Once we fix $v$ and the order of the
neighbors, $v_1, \dots, v_d$ we define $G^i = G - N[v_i] - \{v_1,
\dots, v_{i-1}\}$ for $i \in [d]$. That is, the inequality in
Lemma~\ref{l:recurrent_simp} can be rewritten as
\begin{equation}
\label{e:G^i}
\bb(G) \leq \sum\limits_{i=1}^{d}\bb(G^i).
\end{equation}
We also denote by $k_i$ the size of the set $\blue{N[v_i]} \cup \{v_1, \dots,
v_{i-1}\}$, that is $G^i$ has $n_i = n- k_i$ vertices.



\begin{lemma}
\label{l:cut}
Let $v_1,\dots, v_d$ be vertices forming a cut in $G$ and let $C$ be one of the
components of $G - \{v_1, \dots, v_d\}$ and $G'$ be the union of the remaining
components. Then
$$
\bb(G) \leq \bb(C)\bb(G') + \sum\limits_{i=1}^{d} \bb(G^i).
$$
\end{lemma}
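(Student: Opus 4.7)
The plan is to iterate Lemma~\ref{l:single_vertex} along the cut vertices $v_1,\dots,v_d$ and then exploit that what remains after removing the cut is disconnected, so Lemma~\ref{l:du} applies. This mirrors the peeling argument that yielded Lemma~\ref{l:recurrent_simp} from Lemma~\ref{l:single_vertex}, but here we stop once all cut vertices are removed instead of trying to bound the leftover graph by $0$.

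I would first establish by induction on $i \in \{0,1,\dots,d\}$ that
$$\bb(G) \leq \bb(G - \{v_1,\dots,v_i\}) + \sum_{j=1}^{i} \bb(G^j).$$
The base case $i=0$ is vacuous. For the inductive step, apply Lemma~\ref{l:single_vertex} to the graph $G_i := G - \{v_1,\dots,v_i\}$ at the vertex $v_{i+1}$, yielding $\bb(G_i) \leq \bb(G_i - v_{i+1}) + \bb(G_i - N_{G_i}[v_{i+1}])$. The first term on the right equals $\bb(G - \{v_1,\dots,v_{i+1}\})$. For the second term, the key identity to check is $N_{G_i}[v_{i+1}] = N_G[v_{i+1}] \setminus \{v_1,\dots,v_i\}$, from which $G_i - N_{G_i}[v_{i+1}] = G - N_G[v_{i+1}] - \{v_1,\dots,v_i\} = G^{i+1}$. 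Substituting into the inductive hypothesis finishes the step.

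Setting $i = d$, the hypothesis that $\{v_1,\dots,v_d\}$ is a cut and that $C, G'$ partition the resulting components means $G - \{v_1,\dots,v_d\} = C \sqcup G'$ as a disjoint union of graphs with no edges between them. Lemma~\ref{l:du} then gives $\bb(G - \{v_1,\dots,v_d\}) = \bb(C)\bb(G')$, which combined with the inequality above produces the desired bound.

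The main obstacle is purely bookkeeping: once the identity $G_i - N_{G_i}[v_{i+1}] = G^{i+1}$ is verified and one observes that Lemma~\ref{l:single_vertex} applies regardless of whether $v_{i+1}$ is isolated or has neighbors in $G_i$, everything else is substitution into already-proved lemmas, so no new idea is required.
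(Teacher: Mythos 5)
Your proof is correct and follows the same route as the paper: iterate Lemma~\ref{l:single_vertex} over $v_1,\dots,v_d$ (with the identity $G_i - N_{G_i}[v_{i+1}] = G^{i+1}$ justifying each step), then apply Lemma~\ref{l:du} to the disconnected remainder $C \du G'$. The paper states this more tersely, but your bookkeeping matches its intended argument exactly.
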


The proof of this lemma is essentially the same as the proof of
Lemma~\ref{l:recurrent_simp} in Adamaszek's paper~\cite{Adamaszek:Betti}.

\begin{proof}
This lemma is obtained by an iterative application of Lemma~\ref{l:single_vertex}. We remove all the vertices $v_1, \dots, v_d$ one by one in the given order.
Finally, we use that $\bb(G - \{v_1, \dots, v_d\}) = \bb(C)\bb(G')$ by
Lemma~\ref{l:du}.
\end{proof}
\paragraph{The main bound.}
We prove the following bound for $K_4$-free graphs.

\begin{theorem}
\label{t:K4}
Let $G$ be a graph with $n$ vertices and
without an induced copy of $K_4$. Then
$$\bb(G) \leq \Theta_2^n = 2^{n/3} \approx 1.2599^n.$$

If, in addition, $G$ contains a vertex of degree at most $3$ which is not in
a component consisting of a single triangle, then
$$\bb(G) \leq (\Theta_2^{-4} + \Theta_2^{-5} + \Theta_2^{-6})\Theta_2^n.$$

\end{theorem}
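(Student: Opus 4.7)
The plan is to establish both inequalities of Theorem~\ref{t:K4} simultaneously by strong induction on $n$. Write $C := \Theta_2^{-4}+\Theta_2^{-5}+\Theta_2^{-6}$, and note that a direct numerical check gives $C<1$; this strict inequality is what fuels the whole argument. All estimates will come from Lemma~\ref{l:recurrent_simp} (together with its refinement Lemma~\ref{l:cut}) and the multiplicativity Lemma~\ref{l:du}, applied to a carefully chosen vertex $v$ and an ordering $v_1,\dots,v_d$ of its neighbors that maximizes the decrements $k_i = |N[v_i]\cup\{v_1,\dots,v_{i-1}\}|$.

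I first dispose of two easy reductions. If $G$ has an isolated vertex, then $\bb(G)=0$. If $G$ has a component equal to a single triangle $T=K_3$, Lemma~\ref{l:du} gives $\bb(G)=\bb(T)\bb(G-T)=2\bb(G-T)$, and the induction hypothesis applied to $G-T$ yields $\bb(G)\le 2\cdot 2^{(n-3)/3}=2^{n/3}$. After these reductions, every vertex of degree at most~$3$ satisfies the hypothesis of the second inequality, so whenever the minimum degree of $G$ is at most~$3$, the first inequality will follow immediately from the second.

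For the first inequality when the minimum degree $d$ satisfies $d\ge 4$, I pick a vertex $v$ of minimum degree. Because $G$ is $K_4$-free, $N_G(v)$ is triangle-free and hence bipartite, so I can order its neighbors so that many non-edges appear early in the list. For $d\ge 8$ one has $d\le\Theta_2^{d+1}$, so the direct estimate $\bb(G)\le d\cdot\Theta_2^{n-d-1}$ from~\eqref{e:rec_arbitrary} already gives $\bb(G)\le\Theta_2^n$. For $4\le d\le 7$ the direct estimate fails, and I will apply the second inequality to each $G^i$ instead: every $v_j$ with $j>i$ that survives in $G^i$ has lost the common neighbor $v$, so its degree in $G^i$ is strictly smaller than in $G$; a careful case analysis then shows that some $G^i$ contains a vertex of degree at most~$3$ that is not in a triangle component of $G^i$, and the resulting factor $C$ closes the gap.

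For the second inequality, I pick $v$ of minimum degree among the vertices not in triangle components; its degree $d$ lies in $\{0,1,2,3\}$. The cases $d\le 1$ are immediate. For $d=2$, the two sub-cases split according to whether $v_1v_2$ is an edge; in both, the hypothesis that $v$ is not in a triangle component forces some $v_j$ to have a neighbor outside $\{v,v_1,v_2\}$, adding at least one to some $k_i$ and pushing $\sum_i\Theta_2^{-k_i}$ below $C$. The crucial case $d=3$ exploits that $N(v)$, being triangle-free on three vertices, has $0$, $1$, or $2$ edges. The zero-edge sub-case produces decrements $(k_1,k_2,k_3)=(4,5,6)$ and matches $C$ on the nose. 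In the one- and two-edge sub-cases the direct estimate overshoots $C$, and I close the gap by re-applying the second inequality to each $G^i$: in each such $G^i$ at least one surviving $v_j$ has lost $v$ and thus has degree at most~$2$ in $G^i$, making it eligible for the stronger bound once it is verified not to lie in a triangle component of $G^i$.

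The main obstacle is this last verification. One has to enumerate the local configurations that could trap the candidate low-degree vertex in a newly-created triangle component of $G^i$, and handle each such configuration by hand --- by re-choosing $v$, by peeling off the new triangle component through Lemma~\ref{l:du} and restarting the induction on a smaller graph, or by invoking Lemma~\ref{l:cut} on a small cut exposed by the local structure. Carrying out this enumeration for all relevant configurations is the source of the extensive case analysis promised in the introduction and the reason the appendix is substantially longer than the other sections.
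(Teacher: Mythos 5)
Your skeleton is the right one and matches the paper's opening moves: a simultaneous induction on both bounds (the paper phrases it as a minimal counterexample), reduction to connected graphs with no isolated vertices or triangle components, elimination of minimum degree $\leq 3$ by hand, elimination of degree $\geq 6$ by a cheap estimate, and the idea of feeding the stronger second bound back into the recurrence of Lemma~\ref{l:recurrent_simp} to gain the factor $C=\Theta_2^{-4}+\Theta_2^{-5}+\Theta_2^{-6}\approx 0.9618$ on selected terms. The paper does use exactly this $C$-gain trick in several places (e.g.\ Lemmas~\ref{l:three_points}, \ref{l:three_peaks}, \ref{l:2P2}).

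The genuine gap is in your treatment of the regular cases $d\in\{4,5\}$: the mechanism you propose is quantitatively insufficient there, no matter how the case analysis is organized. Take $G$ $4$-regular with $N(v)\cong C_4$ (which you cannot exclude locally). Ordering the neighbors around the cycle gives $(k_1,\dots,k_4)=(5,5,6,6)$, so $\sum_i\Theta_2^{-k_i}=2\cdot 2^{-5/3}+2\cdot 2^{-2}\approx 1.1300$; even if \emph{every} $G^i$ earned the full factor $C$, you would get $1.1300\cdot 0.9618\approx 1.087>1$. The same failure occurs for $5$-regular $G$ with $N(v)\cong K_{2,3}$: every ordering yields a permutation of $(6,6,7,7,8)$, giving $\sum_i\Theta_2^{-k_i}\approx 1.0544$ and $1.0544\cdot C\approx 1.014>1$. (Your claim that surviving $v_j$'s drop to degree $\leq 3$ is also wrong for $d=5$, where deleting $v$ only brings them to degree $\leq 4$.) So for these neighborhoods the vertex-by-vertex recurrence cannot close, with or without the $C$-gain. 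What the paper does instead is structural: after eliminating the sparser neighborhoods, it shows that the surviving $4$-regular graphs have clique complexes that are surfaces (an Euler-characteristic count), hence are the triangular cycles $TC_n$, for which a bespoke recurrence gives the much stronger bound $\bb(TC_n)=O(2^{n/4})$; and the surviving $5$-regular graphs are only the icosahedron and $C_5\star I_3$, whose total Betti numbers ($7$ and $2$) are computed directly. Without some such classification-plus-direct-computation step, your plan cannot be completed.
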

This first bound is asymptotically optimal as witnessed by the disjoint union of
triangles.


Given that $\Theta_2^{-4} + \Theta_2^{-5} + \Theta_2^{-6} \approx
0.9618$, the improvement from the second bound is very minor. However, it will
be our crucial tool for ruling out $4$-regular graphs.

\paragraph{Minimal counterexample approach.}
The proof is in principle given by induction in the spirit of previous proofs; however, some
new ingredients are needed. From practical point of view, it is better to
reformulate the induction in this case as the minimal counterexample approach. That
is, we will assume that $G$ is a counterexample to Theorem~\ref{t:K4} with the
least number of vertices and we will gradually narrow the set of possible
counterexamples until we show that such $G$ cannot exist. It is easy to check
that the theorem is valid for $n = 1$ or $n=2$.

\subsection{Roots of suitable polynomials}
As our approach in previous sections suggest, we will need to know the roots of
several suitable polynomials. Here we extend the
considerations from Section~\ref{sec:prelim}.
Given an ordered $t$-tuple of positive integers
$(a_1,\dots, a_t)$, we will consider the equation
\begin{equation}
\label{e:roots}
1 = x^{-a_1} + x^{-a_2} + \cdots + x^{-a_t}.
\end{equation}
This can be understood as a polynomial equation after multiplying with a
suitable power of $x$. We are interested in a solution of this equation for $x
\in [1, \infty)$. Note that the right hand side is at least $1$ for $x=1$ and
  it is a decreasing function in $x$ tending to $0$. Therefore, there is a
  unique solution, which we denote by $r_{a_1,\dots,a_t}$. In our previous
  terminology, $\Gamma_d = r_{d+1,\dots,2d}$ and $\Theta_d = r_{d+1, \dots,
  d+1}$ where there are $d$ arguments. We will frequently use the following simple
  observation.
\begin{lemma}
\label{l:roots}
  Whenever $\Omega$ is a real number such that $\Omega
  \geq r_{a_1,\dots, a_t}$, then $\Omega^n \geq \Omega^{n-a_1} + \cdots +
  \Omega^{n-a_t}$, for any positive integer $n$.
\end{lemma}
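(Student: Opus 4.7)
The plan is to reduce the desired inequality to a statement purely about the real function $f(x) := x^{-a_1} + \cdots + x^{-a_t}$ on $[1,\infty)$, and then exploit the monotonicity of $f$.

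First I would divide both sides of the target inequality $\Omega^n \geq \Omega^{n-a_1} + \cdots + \Omega^{n-a_t}$ by $\Omega^n$ (which is positive, since $\Omega \geq r_{a_1,\dots,a_t} \geq 1$); this is an equivalent transformation and reduces the claim to showing $1 \geq \Omega^{-a_1} + \cdots + \Omega^{-a_t}$, i.e.\ $f(\Omega) \leq 1$. Note that the exponent $n$ has now disappeared entirely, so the statement for all positive integers $n$ collapses to a single inequality.

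Next I would observe that each $a_i$ is a positive integer, so every summand $x^{-a_i}$ is a strictly decreasing function of $x$ on $[1,\infty)$; therefore so is $f$. By the very definition of $r_{a_1,\dots,a_t}$ as the unique solution of \eqref{e:roots} in $[1,\infty)$, we have $f(r_{a_1,\dots,a_t}) = 1$. Combining monotonicity with the hypothesis $\Omega \geq r_{a_1,\dots,a_t}$ gives $f(\Omega) \leq f(r_{a_1,\dots,a_t}) = 1$, which is exactly what is needed.

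There is essentially no obstacle here; the only thing to double-check is that $r_{a_1,\dots,a_t}$ is indeed well defined, but this is already recorded in the paragraph preceding the lemma (the right-hand side of \eqref{e:roots} equals a value $\geq 1$ at $x=1$, is continuous and strictly decreasing to $0$, hence hits $1$ exactly once). So the entire proof is a two-line monotonicity argument following the rescaling by $\Omega^n$.
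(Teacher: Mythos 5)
Your proof is correct and follows the same route as the paper: divide by $\Omega^n$ to reduce to the $n$-free inequality $1 \geq \Omega^{-a_1} + \cdots + \Omega^{-a_t}$, then invoke the definition of $r_{a_1,\dots,a_t}$ together with monotonicity of the right-hand side. The paper phrases the second step more tersely ("immediately follows from the definition"), but the underlying argument is identical.
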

\begin{proof}
It is sufficient to prove $1 \geq \Omega^{-a_1} + \cdots + \Omega^{-a_t}$. This
immediately follows from the definition of $r_{a_1,\dots,a_t}$.
\end{proof}


We will need to know the approximative numerical values of $r_{a_1,\dots,a_t}$
for various $t$-tuples $(a_1, \dots, a_t)$, so that we can mutually compare
them. We present the values important for this section in Table~\ref{tab:roots};
we also include some of the important values that we met previously.

We will also often use monotonicity, that is, if $(b_1, \dots, b_t) \geq
(a_1, \dots, a_t)$ entry-by-entry, then $r_{b_1,\dots,b_t} \leq r_{a_1,\dots,a_t}$. This allows us to
skip computing precise values for many sequences $(a_1,\dots, a_t)$.

\begin{table}
\begin{center}
\begin{tabular}{llll}
   & & approx. value & approx. value \\
  $(a_1, \dots, a_t)$ & the root & of the root & of
  $\Theta_2^{-a_1} + \cdots + \Theta_2^{-a_t}$\\
\toprule
$(3,3)$ & $\Theta_2 = 2^{1/3}$ & 1.2599 & 1\\
$(6,6,6,6)$ & $\Theta_2 = 2^{1/3}$ & 1.2599 & 1\\
$(5,7,10,10,11,11,12,12)$ & $r_{5,7,10,\dots,12}$ & 1.2590 &  \\
$(6,6,9,10,11,11,12,13)$ & $r_{6,6,9,\dots,13}$ & 1.2590 &  \\
$(6,6,7,8,9)$ & $r_{6,6,7,8,9}$ & 1.2564 & \\
$(1,7)$ & $r_{1,7}$ & 1.2554 & \\
$(5,6,6,8)$ & $r_{5,6,6,8}$ & 1.2541 & \\
$(5,6,7,7)$ & $r_{5,6,7,7}$ & 1.2519 & \\
$(4,5,6)$ & $\Gamma_3$ & 1.24985 & 0.9618\\
$(5,5,5)$ & $3^{1/5}$ & 1.2457 & 0.9449 \\
$(3,4)$ & $\Gamma_2$ & 1.2207 & 0.8969\\
\end{tabular}
\caption{Solutions of Equation~\eqref{e:roots} for suitable $t$-tuples and
values $\Theta_2^{-a_1} + \cdots + \Theta_2^{-a_t}$ for some of them.}
\label{tab:roots}
\end{center}
\end{table}

\subsection{Initial observations about the minimal counterexample}

\begin{lemma}
\label{l:connected}
Let $G$ be a disconnected graph.
Then $G$ is not a minimal counterexample to Theorem~\ref{t:K4}.
\end{lemma}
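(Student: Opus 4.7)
The plan is to invoke the minimal counterexample hypothesis directly. Assuming $G$ is a minimal counterexample to Theorem~\ref{t:K4} that happens to be disconnected, one splits off a nonempty union of components to write $G = G_1 \sqcup G_2$ with $G_1$ on $n_1$ vertices and $G_2$ on $n_2 = n - n_1$ vertices, both nonempty and both $K_4$-free. By minimality, both clauses of Theorem~\ref{t:K4} apply to $G_1$ and to $G_2$, and Lemma~\ref{l:du} supplies the multiplicative identity $\bb(G) = \bb(G_1)\bb(G_2)$.

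For the unconditional bound I would simply multiply: $\bb(G) \leq \Theta_2^{n_1}\Theta_2^{n_2} = \Theta_2^n$, which immediately rules out $G$ as a counterexample to the first inequality of the theorem.

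For the conditional bound, suppose in addition that $G$ contains a vertex $v$ of degree at most $3$ not lying in a single-triangle component; without loss of generality $v \in V(G_1)$. Since no edges cross between $G_1$ and $G_2$, the degree of $v$ in $G_1$ equals its degree in $G$, and the connected component of $v$ in $G_1$ coincides with its component in $G$, which by assumption is not a single triangle. Hence the hypothesis of the second clause holds for $G_1$, and minimality yields $\bb(G_1) \leq (\Theta_2^{-4} + \Theta_2^{-5} + \Theta_2^{-6})\Theta_2^{n_1}$, while still $\bb(G_2) \leq \Theta_2^{n_2}$. Their product gives exactly the second bound for $G$, again contradicting the counterexample assumption.

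The argument is essentially a clean induction combined with the multiplicativity of $\bb$ under disjoint unions, so there is no genuine obstacle; the only small point requiring care is that the distinguished vertex $v$ retains both of its hypotheses (small degree and not lying in a triangle component) after restriction to $G_1$, which is immediate because degrees and connected components are preserved by disjoint unions.
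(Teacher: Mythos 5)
Your proof is correct and follows essentially the same route as the paper's: apply Lemma~\ref{l:du} together with minimality to the components (the paper factors over all components at once, you split off the piece containing the distinguished vertex, which is an immaterial difference). The point you flag explicitly --- that the low-degree vertex keeps its degree and its non-triangle component after restriction --- is exactly the detail the paper also relies on.
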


\begin{proof}
The proof follows directly from Lemma~\ref{l:du}. Indeed, let $H_1, \dots, H_m$
be the components of $G$, where $m \geq 2$. Let $n_i$ be the size of $H_i$.
For contradiction, let us assume that $G$ is a minimal counterexample to
Theorem~\ref{t:K4}. Then $\bb(H_i) \leq \Theta_2^{n_i}$. If in addition, $H_i$ is
not a triangle and it contains a vertex of degree at most $3$, then $\bb(H_i)
\leq (\Theta_2^{-4} + \Theta_2^{-5} + \Theta_2^{-6})\Theta_2^{n_i}$. Therefore,
Lemma~\ref{l:du} gives $\bb(G) \leq \Theta_2^{n_1 + \dots + n_m}$ and, in
addition, $\bb(G) \leq (\Theta_2^{-4} + \Theta_2^{-5} +
\Theta_2^{-6})\Theta_2^{n_1 + \dots + n_m}$ if at least one $H_i$ is
not a triangle and it contains a vertex of degree at most $3$. This contradicts
that $G$ is a counterexample to Theorem~\ref{t:K4}.
\end{proof}


\subsection{Vertices of degree at most $2$.}

We begin by excluding vertices of degree at most $2$.
\begin{lemma}
\label{l:min3}
Let $G$ be a minimal counterexample to Theorem~\ref{t:K4}.
Then the minimum degree of $G$ is at least $3$.
\end{lemma}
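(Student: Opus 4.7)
The plan is to use the minimal-counterexample framework already set up for Theorem~\ref{t:K4}: suppose for contradiction that the minimum degree $d$ of $G$ is at most $2$, and derive a contradiction by showing that $G$ in fact satisfies both bounds of Theorem~\ref{t:K4}. Throughout I will write $c := \Theta_2^{-4} + \Theta_2^{-5} + \Theta_2^{-6} \approx 0.962$ for the constant in the improved bound and rely on the numerical fact $\Theta_2^{-3} + \Theta_2^{-4} = 2^{-1} + 2^{-4/3} \approx 0.897 < c$. By Lemma~\ref{l:connected} we may assume $G$ is connected, which already handles $d = 0$ (then $n = 1$ and $\bb(G) = 0$).

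The case $d = 1$ is immediate: if $v$ is a minimum-degree vertex with unique neighbor $v_1$, connectedness forces $\deg(v_1) \ge 2$ whenever $n \ge 3$ (the remaining case $n = 2$ is $G = K_2$ with $\bb(G) = 1$, verified by hand). Hence $|N[v_1]| \ge 3$, and Lemma~\ref{l:recurrent_simp} combined with the inductive hypothesis on the strictly smaller graph $G - N[v_1]$ yields $\bb(G) \le \Theta_2^{n-3}$, comfortably below both $\Theta_2^n$ and $c\,\Theta_2^n$.

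The substantive case is $d = 2$; let $v$ have neighbors $v_1, v_2$ and split on whether $v_1 v_2 \in E(G)$. \textbf{Sub-case A:} $v_1 v_2 \in E(G)$. If $\deg(v_1) = \deg(v_2) = 2$, then $\{v, v_1, v_2\}$ is an entire connected component of the connected graph $G$, so $G = K_3$, giving $\bb(G) = 2 = \Theta_2^3$; the first bound holds with equality and the improved bound is not required because $v$ lies in a component that is a single triangle. Otherwise, without loss of generality $\deg(v_1) \ge 3$; ordering the neighbors as $(v_2, v_1)$ in Lemma~\ref{l:recurrent_simp} gives $n_1 \le n - |N[v_2]| \le n - 3$ and, since $v_2 \in N[v_1]$, $n_2 \le n - |N[v_1]| \le n - 4$, so the inductive hypothesis produces $\bb(G) \le \Theta_2^{n-3} + \Theta_2^{n-4} < c\,\Theta_2^n$. \textbf{Sub-case B:} $v_1 v_2 \notin E(G)$. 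Any ordering gives $n_1 \le n - 3$ and, since $v_1 \notin N[v_2]$, $n_2 \le n - (|N[v_2]| + 1) \le n - 4$, again yielding $\bb(G) \le \Theta_2^{n-3} + \Theta_2^{n-4} < c\,\Theta_2^n$.

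The one delicate point -- and the main obstacle -- is the triangle sub-case with $\deg(v_1) = \deg(v_2) = 2$: the crude recurrent estimate only delivers $2\,\Theta_2^{n-3} = \Theta_2^n$, and indeed the improved bound would itself fail on $K_3$. The rescue is that connectedness forces $G$ to coincide with $K_3$, which is precisely the escape clause "$v$ lies in a component that is a single triangle" built into the statement of Theorem~\ref{t:K4}, so no new counterexample arises and we may conclude that the minimum degree of $G$ is at least $3$.
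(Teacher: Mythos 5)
Your proof is correct and follows essentially the same route as the paper's: both apply Lemma~\ref{l:recurrent_simp} to a minimum-degree vertex, use minimality to bound the resulting subgraphs, and isolate the single-triangle component as the one case where only the weaker bound $2\Theta_2^{n-3}=\Theta_2^n$ is available, which is exactly what the escape clause in Theorem~\ref{t:K4} permits. The only cosmetic differences are that you invoke connectedness (Lemma~\ref{l:connected}) to pin the triangle case down to $G=K_3$ and to strengthen the degree-$1$ estimate, whereas the paper phrases the same dichotomy as ``pick $v$ avoiding a triangle component if possible'' and is content with $\bb(G)\leq\Theta_2^{n-2}$ when $d=1$.
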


\begin{proof}
For contradiction assume the minimum degree of $G$ is less than $3$.

Trivially, the minimum degree of $G$ cannot be zero (otherwise
$\bb(G) = 0$).

If the minimum degree of $G$ equals $1$, let $v$ be a vertex in $G$ of
degree~$1$. Let $v_1$ be the neighbor of $v$. Lemma~\ref{l:recurrent_simp}
gives
$$\bb(G) \leq \bb(G - N[v_1]).$$
This immediately gives that $G-N[v_1]$ is a smaller counterexample.

It remains to consider the case when the minimum degree of $G$ equals $2$.
Let $v$ be a vertex of degree $2$ and let $v_1$ and $v_2$ be its neighbors. If
possible, we pick $v$ so that $v$, $v_1$ and $v_2$ do not induce a component
consisting of a single triangle.
Lemma~\ref{l:recurrent_simp} gives
$$\bb(G) \leq \bb(G^1) + \bb(G^2).$$
Note that the size of $G^1$, as well as of $G^2$, is
at most $n-3$.

If $G$ is a minimal counterexample to Theorem~\ref{t:K4}, then
$$\bb(G) \leq \Theta_2^{n-3} + \Theta_2^{n-3} = \Theta_2^n$$ which gives the
required contradiction for the first bound in Theorem~\ref{t:K4}.

If, in addition, $v$, $v_1$ and $v_2$ do not induce a component
consisting of a single triangle, then
the size of $G^1$ or of $G^2$ is at most $n-4$.

Since $G$ is a minimal counterexample to Theorem~\ref{t:K4},
we get

$$
\bb(G) \leq (\Theta_2^{n-3} + \Theta_2^{n-4})  =
(\Theta_2^{-3} + \Theta_2^{-4})\Theta_2^n \leq (\Theta_2^{-4} + \Theta_2^{-5} +
\Theta_2^{-6})\Theta_2^n
$$
where the last inequality $\Theta_2^{-3} +
\Theta_2^{-4} < \Theta_2^{-4} + \Theta_2^{-5} + \Theta_2^{-6}$ can be checked
in Table~\ref{tab:roots}. Therefore $G$ is not a counterexample to
Theorem~\ref{t:K4}.
\end{proof}

\subsection{Vertices of degree 3}
We continue our analysis by excluding vertices of degree $3$.

\begin{proposition}
\label{p:min4}
  Let $G$ be a minimal counterexample to Theorem~\ref{t:K4}.
Then the minimum degree of $G$ is at least $4$.
\end{proposition}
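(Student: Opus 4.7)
The plan is to continue the minimal-counterexample strategy begun in Lemmas~\ref{l:connected} and~\ref{l:min3}: assume $G$ contains a vertex $v$ of degree exactly $3$ and derive a contradiction. Since $G$ is connected by Lemma~\ref{l:connected} and has minimum degree $\geq 3$ by Lemma~\ref{l:min3}, $G$ is not a single triangle, so $v$ is a vertex of degree $\leq 3$ not sitting in a triangle component. Hence the stronger clause of Theorem~\ref{t:K4} applies to $G$, and to contradict $G$ being a counterexample it suffices to prove
\[
\bb(G) \leq \bigl(\Theta_2^{-4} + \Theta_2^{-5} + \Theta_2^{-6}\bigr)\Theta_2^n.
\]

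Write $N(v) = \{v_1, v_2, v_3\}$. Since $G$ is $K_4$-free, the graph induced on $N(v)$ contains no triangle, and therefore at most two edges. Apply Lemma~\ref{l:recurrent_simp} to $v$ with an order of $v_1, v_2, v_3$ chosen to our advantage, tracking $k_i = |N[v_i] \cup \{v_1,\dots,v_{i-1}\}|$. By minimality, $\bb(G^i) \leq \Theta_2^{n - k_i}$, so
\[
\bb(G) \leq \Theta_2^{n-k_1} + \Theta_2^{n-k_2} + \Theta_2^{n-k_3}.
\]
In the favorable case where $N(v)$ is independent: either all three $v_i$ have degree exactly $3$, in which case disjointness of the $N[v_i]$ (apart from $v$) gives $k_1 = 4$, $k_2 = 5$, $k_3 = 6$ and the bound meets the target exactly; or some $v_i$ has degree $\geq 4$, and placing it first yields $k_1 \geq 5$, $k_2 \geq 5$, $k_3 \geq 6$, giving $(2\Theta_2^{-5} + \Theta_2^{-6})\Theta_2^n$, which is strictly below the target by Table~\ref{tab:roots}.

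The harder case is when $N(v)$ spans an edge, say $v_1 v_2$, so that $v, v_1, v_2$ form a triangle and $v_1 \in N[v_2]$ forces $k_2$ to drop. The plan is a refined sub-analysis: if any $v_i$ has degree $\geq 4$, reorder so as to put the edge-partner of the high-degree $v_i$ first (e.g.\ put $v_2$ first when $v_1$ has degree $\geq 4$, or put $v_3$ first when only $v_3$ has degree $\geq 4$), which compensates the loss in $k_2$ by a gain in $k_2$ or $k_1$ and brings the bound back under the target. When all three $v_i$ have degree exactly $3$, the remaining un-peeled vertex $v_3$ survives in $G^1 = G - N[v_1]$ but loses $v$ from its neighborhood, so $v_3$ has degree $\leq 2$ in $G^1$; I would then invoke the stronger form of the inductive hypothesis on $G^1$, upgrading the factor $\Theta_2^{-4}$ to $(\Theta_2^{-4} + \Theta_2^{-5} + \Theta_2^{-6})\Theta_2^{-4}$, which precisely closes the gap. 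The only way this upgrade can fail is if $v_3$ has ended up in an isolated triangle component of $G^1$; in that exceptional configuration the triangle yields a small separator and Lemma~\ref{l:cut} applied to that separator, or else a restart of the recurrence from the external neighbor $w_2$ of $v_2$, finishes the argument using $K_4$-freeness to constrain the overlaps.

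The main obstacle is the final sub-case of $N(v)$ containing an edge with $v, v_1, v_2, v_3$ all of degree exactly $3$: direct peeling loses roughly a factor $\Theta_2^{-4} - \Theta_2^{-5}$ that must be recovered either by (i) certifying that the low-degree vertex exposed in $G^i$ lies in a component larger than a single triangle, so that the strengthened inductive bound is available, or (ii) isolating a small cut produced by the triangle through $v$ and invoking Lemma~\ref{l:cut}, or (iii) iterating Lemma~\ref{l:recurrent_simp} one more step on a carefully chosen neighbor and ruling out the exceptional configurations by hand using the $K_4$-free constraint. The bookkeeping needed to eliminate these exceptional configurations is likely the most delicate part of the argument.
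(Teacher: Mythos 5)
There is a genuine gap, and it sits exactly where you suspected. Your treatment of the independent neighborhood (giving $(k_1,k_2,k_3)=(4,5,6)$, i.e.\ the target $(\Theta_2^{-4}+\Theta_2^{-5}+\Theta_2^{-6})\Theta_2^n$ on the nose) and of the cases with a neighbor of degree $\geq 4$ matches the paper's Lemmas~\ref{l:three_points} and~\ref{l:3edge_nec}. But in the critical sub-case --- $N(v)$ spans an edge $v_1v_2$ and all of $v,v_1,v_2,v_3$ have degree $3$ --- the proposed fix does not close numerically. The best ordering $(v_3,v_1,v_2)$ gives $(k_1,k_2,k_3)=(4,5,5)$, i.e.\ $\Theta_2^{-4}+2\Theta_2^{-5}\approx 1.027$, which exceeds even the weak bound $1$; the needed value is $\approx 0.9618$. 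Upgrading a term of the form $\Theta_2^{n-k}$ to $(\Theta_2^{-4}+\Theta_2^{-5}+\Theta_2^{-6})\Theta_2^{n-k}$ saves only a factor $\approx 0.962$ on that term, so even upgrading all three summands yields roughly $0.988$, still above $0.9618$. So the claim that the stronger inductive hypothesis ``precisely closes the gap'' is false, and neither of your fallbacks works: the natural cut here is the triangle $\{v,v_1,v_2\}$ separated by its three external neighbors, and Lemma~\ref{l:cut} then contributes a term $\bb(K_3)\bb(G')=2\Theta_2^{n-6}=0.5\,\Theta_2^n$ before you even add the other summands.

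What the paper actually does for this case is a different, global idea. First, Lemmas~\ref{l:three_points}, \ref{l:P3} and \ref{l:3edge_nec} together propagate the local structure: if one degree-$3$ vertex has neighborhood ``edge plus point'' with all neighbors of degree $3$, then (since the other two neighborhood types are excluded and \ref{l:3edge_nec} forces degree $3$ to spread) the whole connected graph is cubic with every vertex in exactly one triangle. Lemma~\ref{l:P2+I1} then contracts each triangle to a point, obtaining a $3$-regular multigraph, rules out multiple edges, and splits into two global configurations: the $K_4$ of triangles (handled by an explicit integer count $\bb(G)\leq 14$) and an induced $P_3$ of triangles, where a \emph{second} application of Lemma~\ref{l:recurrent_simp} inside $G^1$ yields the tuple $(5,5,8,8)$ and hence $3\Theta_2^{-5}\approx 0.9449<0.9618$. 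This two-level recurrence is only available because the global structure pins down the configuration around $v_1$. Separately, your proposal never really engages with the two-edge ($P_3$) neighborhood, which in the paper needs its own case analysis (Lemma~\ref{l:P3}) using Lemma~\ref{l:cut} and an explicit small exceptional graph. So the missing ingredient is not bookkeeping but the structural classification of the cubic one-triangle-per-vertex graphs.
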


We need a number of lemmas ruling out various cases.

\begin{lemma}
\label{l:three_points}
  Let $G$ be a minimal counterexample to Theorem~\ref{t:K4}.
  Then $G$ does not contain a vertex $v$ of degree
  $3$ such that the open neighborhood $N(v)$ consists of three isolated points.
\end{lemma}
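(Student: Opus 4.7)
The plan is to apply Lemma~\ref{l:recurrent_simp} directly at the hypothetical vertex $v$, using the independence of its neighborhood to squeeze out extra gains in each summand, and conclude by minimality of $G$.

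Let $v_1, v_2, v_3$ be the three pairwise non-adjacent neighbors of $v$ (this is exactly the assumption that $N(v)$ consists of three isolated points). By Lemma~\ref{l:min3} every vertex of $G$ has degree at least $3$, so $|N[v_i]| \geq 4$ for each $i$. Moreover, since $\{v_1, v_2, v_3\}$ is independent, $v_j \notin N[v_i]$ for $j \neq i$. Hence the sets $N[v_i]$ and $\{v_1, \dots, v_{i-1}\}$ are disjoint, which gives, in the notation of Equation~\eqref{e:G^i},
\[ k_1 \geq 4, \qquad k_2 \geq |N[v_2]| + 1 \geq 5, \qquad k_3 \geq |N[v_3]| + 2 \geq 6, \]
so $n_1 \leq n-4$, $n_2 \leq n-5$ and $n_3 \leq n-6$.

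By minimality of $G$, the first bound of Theorem~\ref{t:K4} holds for each $G^i$, so $\bb(G^i) \leq \Theta_2^{n_i}$. Lemma~\ref{l:recurrent_simp} then yields
\[ \bb(G) \leq \bb(G^1) + \bb(G^2) + \bb(G^3) \leq \Theta_2^{n-4} + \Theta_2^{n-5} + \Theta_2^{n-6} = (\Theta_2^{-4} + \Theta_2^{-5} + \Theta_2^{-6})\Theta_2^n. \]
Since $\Theta_2^{-4} + \Theta_2^{-5} + \Theta_2^{-6} \approx 0.9618 < 1$ (see Table~\ref{tab:roots}), the first bound $\bb(G) \leq \Theta_2^n$ holds. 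For the second bound, note that $v$ has degree $3$ in $G$ and cannot lie in a component equal to a single triangle (a triangle component has only vertices of degree $2$); thus $G$ is required to satisfy the second bound too, and the display above is precisely it. Hence $G$ is not a counterexample to Theorem~\ref{t:K4}, contradicting the choice of $G$.

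There is no real obstacle here: the only point requiring care is the bookkeeping that the independence of $\{v_1,v_2,v_3\}$ promotes the na\"ive exponents $n-4, n-4, n-4$ to $n-4, n-5, n-6$, and this immediately suffices to match the second (and hence first) bound.
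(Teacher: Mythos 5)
Your proof is correct and follows essentially the same route as the paper: apply Lemma~\ref{l:recurrent_simp} at $v$, use the minimum degree bound from Lemma~\ref{l:min3} together with the independence of $\{v_1,v_2,v_3\}$ to get sizes $n-4$, $n-5$, $n-6$, and conclude via $\Theta_2^{-4}+\Theta_2^{-5}+\Theta_2^{-6}<1$. Your explicit remark that only the weaker inductive bound is needed for the $G^i$ while the stronger bound is still obtained for $G$ matches the parenthetical note in the paper's own proof.
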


\begin{proof}
For contradiction, let us assume that $G$ contains such a vertex $v$ and let
$v_1$, $v_2$, and $v_3$ be its neighbors.
As usual, Lemma~\ref{l:recurrent_simp} gives
$$\bb(G) \leq \bb(G^1) + \bb(G^2) + \bb(G^3).$$
We already know that the minimum degree of $G$ is at least $3$
by Lemma~\ref{l:min3}. Since $v_1$, $v_2$, and $v_3$ are three isolated points
we get that the sizes of the three graphs on the right-hand side are at least
$n-4$, $n-5$ and $n-6$.

If $G$ is a minimal counterexample to Theorem~\ref{t:K4}, by Lemma~\ref{l:roots}
we get
$$\bb(G) \leq \Theta_2^{n-4} + \Theta_2^{n-5} + \Theta_2^{n-6} = (\Theta_2^{-4}
+ \Theta_2^{-5} + \Theta_2^{-6})\Theta_2^n.$$
This is the required contradiction. (Note that we have assumed only the weaker
bound in Theorem~\ref{t:K4} for the graphs $G^1$, $G^2$, and
$G^3$, but we still could derive the stronger bound for $G$.)
\end{proof}

By the previous lemma, we have ruled out a case when a vertex of degree three
sees three isolated vertices. Now we will focus on the case when it sees an
isolated vertex and an edge. At first we do not rule it out
completely but set up some necessary condition.

\begin{lemma}
\label{l:3edge_nec}
  Let $G$ be a minimal counterexample to Theorem~\ref{t:K4}.
  If $G$ contains a vertex $v$ of degree
  $3$ such that the open neighborhood $N(v)$ consists of an edge and an isolated
  vertex, then all neighbors of $v$ have degree $3$.
\end{lemma}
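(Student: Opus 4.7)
Fix such a vertex $v$ and write $v_1,v_2,v_3$ for its neighbors with $v_1v_2\in E(G)$ and $v_3$ non-adjacent to both $v_1$ and $v_2$. Assume for contradiction that some neighbor of $v$ has degree at least $4$. Observe first that $v$ itself is a vertex of degree $3$ whose component contains at least the four vertices $v,v_1,v_2,v_3$, so $v$'s component is not a single triangle. Hence, to contradict $G$ being a minimal counterexample to Theorem~\ref{t:K4}, it suffices to derive the \emph{stronger} bound
\[
\bb(G) \;\leq\; \bigl(\Theta_2^{-4}+\Theta_2^{-5}+\Theta_2^{-6}\bigr)\Theta_2^n.
\]
The strategy is to apply Lemma~\ref{l:recurrent_simp} with a cleverly chosen ordering of $v_1,v_2,v_3$ and use minimality, which gives $\bb(G^i)\leq\Theta_2^{n_i}$ via the weaker bound of Theorem~\ref{t:K4}, on each of the three pieces $G^i$. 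Two cases cover all possibilities.

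\emph{Case A: $\deg(v_1)\geq 4$ (by the symmetry $v_1\leftrightarrow v_2$, this also handles $\deg(v_2)\geq 4$).} I would order the neighbors as $v_1,v_2,v_3$. Then $k_1=|N[v_1]|\geq 5$; since $v_1\in N[v_2]$ we get $k_2=|N[v_2]|\geq 4$ (using Lemma~\ref{l:min3}); and since $v_3$ is non-adjacent to both $v_1$ and $v_2$ and has degree $\geq 3$, we get $k_3=|N[v_3]|+2\geq 6$. Summing via Lemma~\ref{l:recurrent_simp} gives exactly $\bb(G)\leq \Theta_2^{n-5}+\Theta_2^{n-4}+\Theta_2^{n-6}$, the target bound.

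\emph{Case B: $\deg(v_3)\geq 4$ while $\deg(v_1)=\deg(v_2)=3$.} Here I would order the neighbors as $v_3,v_1,v_2$. Then $k_1=|N[v_3]|\geq 5$; since $v_3\notin N[v_1]$, $k_2=|N[v_1]|+1=5$; and since $v_3\notin N[v_2]$ while $v_1\in N[v_2]$, $k_3=|N[v_2]|+1=5$. This yields $\bb(G)\leq 3\Theta_2^{n-5}$. It remains to check $3\Theta_2^{-5}\leq \Theta_2^{-4}+\Theta_2^{-5}+\Theta_2^{-6}$, equivalently $2\Theta_2\leq \Theta_2^2+1$, i.e.\ $2\cdot 2^{1/3}\leq 2^{2/3}+1$, which is a routine numerical verification (both in Table~\ref{tab:roots} and by direct computation). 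Either case produces the contradiction.

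The only real choice point is picking the ordering of $v_1,v_2,v_3$ that exploits the known disjointness of $v_3$ from $\{v_1,v_2\}$ and packs the extra degree of the high-degree neighbor as early as possible in the recursion; after that, everything reduces to invoking the weaker inductive bound and a small arithmetic check, so no serious obstacle is anticipated.
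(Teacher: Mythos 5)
Your proof is correct and follows essentially the same route as the paper: apply Lemma~\ref{l:recurrent_simp} at $v$ with a well-chosen ordering of $v_1,v_2,v_3$, invoke minimality to bound each $\bb(G^i)$ by $\Theta_2^{n-k_i}$, and compare with $(\Theta_2^{-4}+\Theta_2^{-5}+\Theta_2^{-6})\Theta_2^n$. The only cosmetic difference is that the paper relabels so that all three removed sets have size at least $5$ (giving $3\Theta_2^{n-5}$ uniformly), whereas your Case~A ordering yields exponents $(4,5,6)$ matching the target exactly; both reduce to the same numerical check $3\Theta_2^{-5}\leq\Theta_2^{-4}+\Theta_2^{-5}+\Theta_2^{-6}$.
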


\begin{proof}
We know that the minimum degree of $G$ is at least $3$ by
Lemma~\ref{l:min3}. For contradiction, let us assume that $G$ contains a vertex
$v$ such that it has three neighbors $v_1$, $v_2$ and $v_3$; $\deg v_1 \geq 4$,
$\deg v_2, v_3 \geq 3$, and the induced subgraph of $G$ on $\{v_1, v_2, v_3\}$
consists of an edge and an isolated vertex. Without loss of generality we
assume that $v_1$ and $v_2$ are not connected with an edge (otherwise we swap
$v_2$ and $v_3$).
As usual, Lemma~\ref{l:recurrent_simp} gives
$$\bb(G) \leq \bb(G^1) + \bb(G^2) + \bb(G^3).$$
The size of all three graphs on the right-side is at most $n-5$.


Therefore
$$
\bb(G) \leq 3\Theta_2^{n-5} \leq
(\Theta_2^{-4}+\Theta_2^{-5}+\Theta_2^{-6})\Theta_2^n
$$
since $3\Theta_2^{-5} < \Theta_2^{-4} + \Theta_2^{-5} + \Theta_2^{-6}$ which
follows from Table~\ref{tab:roots} (or from the convexity of the function
$\Theta_2^{x}$).

\end{proof}

Now we may rule out the case of a vertex of degree $3$ which sees two edges in
its neighborhood.

\begin{lemma}
\label{l:P3}
  Let $G$ be a minimal counterexample to Theorem~\ref{t:K4}.
  Then $G$ does not contain a vertex $v$ of degree
  $3$ such that the open neighborhood $N(v)$ consists of the path of length $2$.
\end{lemma}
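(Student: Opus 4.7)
The plan is to assume for contradiction that $G$ is a minimal counterexample to Theorem~\ref{t:K4} containing a vertex $v$ of degree~$3$ whose open neighborhood induces a $P_3$, labelled $v_1, v_2, v_3$ with $v_2$ the central vertex (so $v_1v_2, v_2v_3$ are edges and $v_1v_3$ is not). By Lemma~\ref{l:min3} each $v_i$ has degree at least~$3$, and $K_4$-freeness here is automatic since $\deg v = 3$ forces any hypothetical fourth $K_4$ vertex to lie in $\{v_1,v_2,v_3\}$. I aim for the stronger bound $\bb(G)\leq(\Theta_2^{-4}+\Theta_2^{-5}+\Theta_2^{-6})\Theta_2^n$, contradicting minimality (note that $v$ has degree $3$ and its component must have at least five vertices since $\deg v_1\geq 3$, so the stronger bound applies).

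The main tool is Lemma~\ref{l:recurrent_simp} applied at $v$ with a carefully chosen ordering. Using the two facts $v_1,v_3\in N[v_2]$ and $v_1\notin N[v_3]$, the ordering $(v_1,v_3,v_2)$ gives $|G^1|=n-|N[v_1]|$, $|G^2|=n-|N[v_3]|-1$, and $|G^3|=n-|N[v_2]|$. The first case, $\deg v_1\geq 4$ and $\deg v_3\geq 4$, is immediate: the sizes become $\leq n-5, n-6, n-4$, and $\Theta_2^{-5}+\Theta_2^{-6}+\Theta_2^{-4}<1$ (Table~\ref{tab:roots}) yields the bound. Otherwise, without loss of generality $\deg v_1=3$, with $N(v_1)=\{v,v_2,u_1\}$ where necessarily $u_1\notin\{v,v_2,v_3\}$ and $u_1\not\sim v$.

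The next split is on whether $u_1\sim v_2$. If $u_1\sim v_2$ then $\deg v_2\geq 4$, and the re-ordering $(v_2,v_1,v_3)$ suffices whenever $\deg v_3\geq 4$. If $\deg v_3=3$, introducing its third neighbor $u_3$ and arguing analogously shows $u_3\sim v_2$ is forced (else Lemma~\ref{l:3edge_nec} at $v_3$ would give $\deg v_2=3$, a contradiction); then $u_1\neq u_3$ yields $\deg v_2\geq 5$ and the bound goes through, while $u_1=u_3=u$ forces a very rigid local structure. In that rigid branch either $G=K_{1,2,2}$ (verified directly: $\bb(G)=2\leq\Theta_2^5$) or $\deg u\geq 4$ and I apply Lemma~\ref{l:cut} with cut $\{v_2,u\}$: removing these separates the path $\{v,v_1,v_3\}$ (whose independence complex has $\bb=1$) from a graph on $n-5$ vertices, and the induction hypothesis gives $\bb(G)\leq 3\Theta_2^{n-5}$, well below the target.

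The main obstacle is the subcase $u_1\not\sim v_2$. Here Lemma~\ref{l:3edge_nec} applied to $v_1$ (whose neighborhood is now edge plus isolated vertex) forces $\deg v=\deg v_2=\deg u_1=3$, so $N(v_2)=\{v,v_1,v_3\}$ itself induces a $P_3$ centered at $v$, meaning $v_2$ satisfies the hypothesis of the present lemma as well. Lemma~\ref{l:three_points} excludes $N(u_1)$ from being three isolated points, so the two remaining neighbors of $u_1$ are forced to be connected, and Lemma~\ref{l:3edge_nec} at $u_1$ propagates degree-$3$ constraints further along. The naive recursion here yields $(\Theta_2^{-4}+\Theta_2^{-5}+\Theta_2^{-5})\Theta_2^n$, which just fails the required inequality. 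I would overcome this by applying Lemma~\ref{l:cut} with the cut $\{v,v_2\}$ (or the enlarged cut $\{v,v_2,u_1,u_3\}$ when $\deg v_3=3$ as well); the propagated structural restrictions ensure that one of the resulting components is very small (often just the isolated vertex $v_1$ with $\bb=0$, or the path $\{v_1,u_1\}\sqcup\{v_3,u_3\}$), while the other pieces are strictly smaller and $K_4$-free, so induction closes the argument. The few sporadic configurations where $G$ is forced to be a fixed small graph on a constant number of vertices are verified by direct computation of $\bb(G)$.
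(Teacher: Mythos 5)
Your overall strategy (case analysis on the degrees of the vertices of $N(v)$, driven by Lemmas~\ref{l:recurrent_simp}, \ref{l:three_points}, \ref{l:3edge_nec} and~\ref{l:cut}) is the right one, and your first case (both endpoints of the path of degree $\geq 4$) and your case ``$u_1\sim v_2$'' are essentially correct, up to a small omission in the rigid branch (you should also cover $\deg u=3$ with $\deg v_2\geq 5$, which your cut handles anyway). The genuine gap is in what you call the main obstacle, i.e.\ the case where an endpoint $v_1$ and the center $v_2$ of the path both have degree $3$. There you concede that the plain recursion fails and propose Lemma~\ref{l:cut} with the cut $\{v,v_2\}$ or the enlarged cut $\{v,v_2,u_1,u_3\}$; neither works numerically. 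For the cut $\{v,v_2\}$ the two recursion terms alone already cost $2\Theta_2^{-4}=2^{-1/3}\approx 0.794$, and the product term $\bb(C)\bb(G')=\bb(G-\{v,v_2\})$ is not small: after deleting $v$ and $v_2$ the vertex $v_1$ is still attached to $u_1$ (and $u_1$ to its two further neighbors), so no tiny component with vanishing or unit $\bb$ splits off in general, and even in the most favorable configuration the product term is about $\Theta_2^{-6}=0.25$, for a total exceeding $0.9618$. For any cut large enough to isolate $v_1$ (it must contain all of $v,v_2,u_1$), the three or four recursion terms already sum to at least $2\Theta_2^{-4}+\Theta_2^{-6}\approx 1.04>1$, so killing the product term via $\bb(\{v_1\})=0$ does not save you.

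What makes this case work in the paper is a different cut: take the cut to be the \emph{two endpoints} of the path. Since $v$ and the center both have degree $3$ and all their neighbors lie in $\{v\}\cup N(v)$, deleting the two endpoints isolates the edge formed by $v$ and the center, so the product term is $1\cdot\bb(G')$ with $G'$ on $n-4$ vertices, and there are only \emph{two} recursion terms. One of them, $G-N[v_1]$ for the degree-$3$ endpoint $v_1$, is then improved by a further application of Lemma~\ref{l:recurrent_simp}: the other degree-$3$ endpoint has degree $\leq 1$ in $G-N[v_1]$, which lets one delete the closed neighborhood of its remaining neighbor and drop to $n-6$ vertices. This yields exactly $\Theta_2^{-4}+\Theta_2^{-5}+\Theta_2^{-6}$ and closes the case. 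Without this (or an equivalent) device, your argument does not establish the lemma.
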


\begin{proof}
For contradiction, let $v$ be a vertex in $G$ contradicting the statement of
the lemma and let $v_1$, $v_2$ and $v_3$ be its neighbors. Without loss of
generality, $v_3$ is adjacent to $v_1$ and to $v_2$ but $\{v_1, v_2\}$ is not
an edge.

We need to distinguish some cases and subcases.
\begin{enumerate}[(i)]
  \item First we assume that $\deg v_3 = 3$.
    \begin{enumerate}[(a)]
      \item Now we consider a subcase $\deg v_2 = 3$. See
	Figure~\ref{f:deg3_ia}.

      In this subcase let $w_2$ be the unique neighbor of $v_2$ different from
	$v$ and $v_3$. \blue{Then the set $\{v_1,v_2\}$ forms a cut.} Let $C$
	be the edge $vv_3$ and $G' = G - \{v_1,v_2,v_3,v\}$.
	Then Lemma~\ref{l:cut} gives
      $$
      \bb(G) \leq \bb(C)\bb(G') + \bb(G-N[v_1]) + \bb(G - N[v_2] - v_1).
      $$
      We observe that in the graph $G-N[v_1]$, the vertex $v_2$ has degree $1$.
      Thus we further get $\bb(G-N[v_1]) \leq \bb(G-N[v_1] - N[w_2])$ by
      Lemma~\ref{l:recurrent_simp}.

      Note that $\bb(C) = 1$ and the size of $G'$ is $n-4$. We also know that
	the size of $G - N[v_2] - v_1$ \blue{is $n-5$}. Finally, the size of $G-N[v_1] - N[w_2]$ is
      at most $n-6$, even if $w_2$ and $v_1$ are neighbors. As usual, if $G$ is a
      minimal counterexample to Theorem~\ref{t:K4}, we get
      $$
      \bb(G) \leq 1\cdot\Theta_2^{n-4} + \Theta_2^{n-6} + \Theta_2^{n-5}
      $$
      as required.
\begin{figure}
\begin{center}
\includegraphics{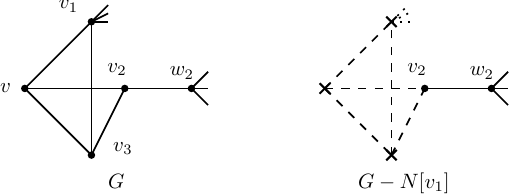}
\caption{Subcase (ia), $G$ and $G - N[v_1]$.}
\label{f:deg3_ia}
\end{center}
\end{figure}
      \item If we consider a subcase $\deg v_1 = 3$, it can be solved
	analogously to the previous subcase by swapping $v_1$ and $v_2$.
      \item Finally, we consider the subcase $\deg v_1 \geq 4$ and $\deg v_2
	\geq 4$. Here we use the usual bound via Lemma~\ref{l:recurrent_simp}
	which gives
      $$\bb(G) \leq \bb(G^1) + \bb(G^2) + \bb(G^3).$$
      The size of $G^1$ is at most $n-5$, the
      size of $G^2$ is at most $n-6$, and the size of $G^3$ is at most $n-4$. Therefore, we get a contradiction as above.
%
  \end{enumerate}
  \item Now we consider the case $\deg v_3 \geq 4$.

    If at least one of the vertices $v_1$ or $v_2$ has degree $4$, or if $\deg
    v_3 \geq 5$, we use again the bound
     $$\bb(G) \leq \bb(G^1) + \bb(G^2) + \bb(G^3).$$
    The sizes of the three graphs on the right-hand side are either at least $n-5$
    or they are at least $n-4$, $n-5$ and $n-6$ respectively. This yields the
    required contradiction eventually using that $3\Theta_2^{-5} <
    \Theta_2^{-4} + \Theta_2^{-5} + \Theta_2^{-6}$.

    Finally, we know that $\deg v_1 = \deg v_2 = 3$ and $\deg v_3 = 4$. In such
    case either $v_3$ and $v_1$ have a single common neighbor (namely $v$), or $v_3$ and
    $v_2$ have a single common neighbor (again $v$), or we get the graph on
    Figure~\ref{f:deg3_ii}.
    (Indeed, if the rightmost vertex in Figure~\ref{f:deg3_ii} has degree at least $4$ then repeating the analysis above in (ii) for $v_1$ instead of $v$ gives the desired contradiction.)
    In the first case, we get a contradiction with
    Lemma~\ref{l:3edge_nec} for $v_1$. The second case is symmetric. In the
    last case, the independence complex of $G$ consists of two edges and
    an isolated vertex; therefore $\bb(G) = 2 \leq (\Theta_2^{-4} +
    \Theta_2^{-5} + \Theta_2^{-6})\Theta_2^5$. A
    contradiction.

\begin{figure}
\begin{center}
\includegraphics{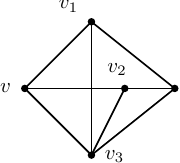}
\caption{A graph occurring in case (ii).}
\label{f:deg3_ii}
\end{center}
\end{figure}
\end{enumerate}
\end{proof}

Now we may rule out the only remaining case of minimum degree $3$ when we have
$3$-regular graph where the open neighborhood $N(v)$ of every vertex $v$
consists of an edge and an isolated vertex.

\begin{lemma}
\label{l:P2+I1}
  Let $G$ be a minimal counterexample to Theorem~\ref{t:K4}.
  Then $G$ is not a cubic ($3$-regular) graph such that the open neighborhood $N(v)$ of
  every vertex $v$ consists of an edge and isolated vertex.
\end{lemma}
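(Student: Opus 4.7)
The plan is to derive a contradiction by proving $\bb(G)\le\kappa\,\Theta_2^n$, where $\kappa:=\Theta_2^{-4}+\Theta_2^{-5}+\Theta_2^{-6}$. By Lemma~\ref{l:connected} we may assume $G$ is connected; combined with the fact that the hypothesis forces every vertex to lie in a unique triangle and the triangles to partition $V(G)$ (so $n$ is a positive multiple of~$6$), this would contradict $G$ being a minimal counterexample to Theorem~\ref{t:K4}, whose second bound applies to $G$ since $G$ has vertices of degree~$3$ and is not a single-triangle component.

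Fix $v$ with neighbors $v_1,v_2,v_3$, where $v_1v_2$ is the edge and $v_3$ the isolated vertex of $N(v)$; so $\{v,v_1,v_2\}$ is a triangle. Denote by $w_1,w_2$ the third (``external'') neighbors of $v_1,v_2$, and let $\{v_3,a,b\}$ be the triangle through $v_3$, with $ab$ an edge and $v$ the isolated vertex of $N(v_3)$. The $P_2\sqcup K_1$ hypothesis forces $w_1,w_2,v_3$ to be three distinct vertices outside $\{v,v_1,v_2\}$, since otherwise $v$ would lie in a second triangle.

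Apply Lemma~\ref{l:recurrent_simp} at $v$ with the ordering $(v_3,v_1,v_2)$ to obtain $\bb(G)\le\bb(G^1)+\bb(G^2)+\bb(G^3)$, where $G^1=G-N[v_3]$, $G^2=G-N[v_1]-v_3$, and $G^3=G-N[v_2]-v_3-v_1$ have $n-4$, $n-5$, and $n-5$ vertices respectively. Within each $G^i$ I choose a vertex of degree at most $2$ and apply Lemma~\ref{l:recurrent_simp} once more, bounding the resulting subgraphs by the inductive first bound $\bb(\cdot)\le\Theta_2^{|V(\cdot)|}$ of Theorem~\ref{t:K4}. In the generic configuration (no coincidences among the relevant vertices): inside $G^1$, the vertex $v_1$ has degree~$2$ with non-adjacent neighbors $v_2,w_1$, yielding $\bb(G^1)\le\Theta_2^{n-7}+\Theta_2^{n-9}$; inside $G^2$, the vertex $w_2$ has degree~$2$ with two neighbors forming an edge of $w_2$'s own triangle (since $v_2$ is the external neighbor of $w_2$ and hence the isolated vertex of $N(w_2)$), yielding $\bb(G^2)\le 2\Theta_2^{n-9}$; and $\bb(G^3)\le 2\Theta_2^{n-9}$ by symmetry. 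Summing,
\[
\bb(G)\le\Theta_2^{n-7}+5\Theta_2^{n-9}=(\Theta_2^{-7}+5\Theta_2^{-9})\Theta_2^n,
\]
and the numerical inequality $\Theta_2^{-7}+5\Theta_2^{-9}\approx 0.823<0.962\approx\kappa$ delivers the contradiction.

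The main obstacle is the case analysis for non-generic configurations, in which vertices among $\{a,b,w_1,w_2\}$ or their neighbors coincide and the removed sets in the second-level recurrences shrink. The admissible possibilities reduce (using the $P_2\sqcup K_1$ condition, which rules out certain coincidences by producing a neighborhood with no edge or too many edges) to: (i) $w_1\in\{a,b\}$ or symmetrically $w_2\in\{a,b\}$, and (ii) $w_1\sim w_2$. In each such case the chosen second-level vertex has strictly smaller degree in some $G^i$ than in the generic case, which only \emph{improves} the bound (a direct check gives $\bb(G)\le 0.64\,\Theta_2^n$ in every such configuration). The degenerate combination $\{w_1,w_2\}=\{a,b\}$ confines $G$ to the six vertices $\{v,v_1,v_2,v_3,a,b\}$ and hence forces $n=6$; this is the only small case, for which the unique such graph is the triangular prism $K_3\times K_2$ with $\bb(G)=1\ll\kappa\,\Theta_2^6$.
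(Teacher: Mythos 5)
Your route is genuinely different from the paper's. The paper contracts each triangle of $G$ to a point, observes that the result is a cubic multigraph $G'$, and then splits into structural cases (triple edge, double edge, $G'=K_4$, or $G'$ containing an induced path of length $2$), using Lemma~\ref{l:cut} and a direct computation on the $12$-vertex graph in the $K_4$ case; its decisive estimate is $\bb(G)\le\Theta_2^n(2\Theta_2^{-5}+2\Theta_2^{-8})\approx 0.945\,\Theta_2^n$, which clears the threshold $\Theta_2^{-4}+\Theta_2^{-5}+\Theta_2^{-6}\approx 0.962$ only narrowly. You instead iterate Lemma~\ref{l:recurrent_simp} twice from a single vertex $v$ and classify the possible coincidences among $\{a,b,w_1,w_2\}$. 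Your generic computation is correct ($\Theta_2^{-7}+5\Theta_2^{-9}\approx 0.823$), gives a more comfortable margin, avoids Lemma~\ref{l:cut} entirely, and your identification of the admissible degeneracies (each of $w_1,w_2$ can only be adjacent to $\{a,b\}$ by lying in $\{a,b\}$, and $w_1\sim w_2$ forces them into a common triangle) is right, as is the prism endgame with $\bb=1$.

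There is, however, a genuine gap in how you dispose of the non-generic cases. The claim that every coincidence makes ``the chosen second-level vertex'' have smaller degree and therefore ``only improves the bound'' is false as stated: a coincidence can instead shrink the set removed at the second level, which \emph{enlarges} the residual graph and worsens that term. Concretely, if $w_2\in\{a,b\}$, then in $G^1=G-N[v_3]$ your chosen vertex $v_1$ still has degree $2$, but $N_{G^1}[v_2]=\{v_2,v_1\}$ has only two elements, so the first term of your $G^1$ estimate degrades from $\Theta_2^{n-7}$ to $\Theta_2^{n-6}$; the total with your stated choices comes to roughly $(\Theta_2^{-6}+\Theta_2^{-8}+3\Theta_2^{-9})\Theta_2^n\approx 0.78\,\Theta_2^n$, not $\le 0.64\,\Theta_2^n$. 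The conclusion survives because $0.78<0.962$, and one can also repair the estimate by picking $v_2$ (which really does have degree $1$ in $G^1$ in this configuration) as the second-level vertex, but each degenerate configuration must be recomputed term by term rather than dismissed by the monotonicity heuristic you invoke. Until that is done explicitly --- for $w_1\in\{a,b\}$, for $w_2\in\{a,b\}$, and for $w_1\sim w_2$ separately --- the case analysis is not complete. (Two small further imprecisions: $n$ being a multiple of $6$ needs the handshake argument on the contracted cubic multigraph, though you never use this fact; and $w_1\ne w_2$ follows because otherwise $v_1$, not $v$, would lie in two triangles.)
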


\begin{proof}
  For contradiction assume $G$ is a minimal counterexample to Theorem~\ref{t:K4} and
  $G$ satisfies the condition (C) that the open neighborhood $N(v)$ of
    every vertex $v$ consists of an edge and isolated vertex. Equivalently, the
    condition (C) can be reformulated so that $G$ is a cubic graph where every
    vertex is incident to exactly one triangle.
%
    By contracting each triangle to a point,
    graphs satisfying (C)
    are in one to one correspondence with $3$-regular multigraphs. (We allow
    multiple edges but we disallow loops.) See some examples on
    Figure~\ref{f:contract_triangles}. Let $G'$ be the multigraph obtained from
    $G$ by contracting the triangles of $G$.
\begin{figure}
\begin{center}
\includegraphics{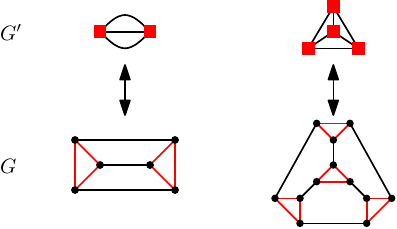}
\caption{One to one correspondence between $G$ and $G'$.}
\label{f:contract_triangles}
\end{center}
\end{figure}
 First, we show that $G'$ is actually a graph. Indeed, if $G'$ contains a triple
 edge, then $G'$ must be the graph on the left part of
 Figure~\ref{f:contract_triangles}, as $G$ is connected. In such case $\bb(G) = 1$ since the
 independence complex of $G$ is the 6-cycle. If $G'$ contains a double edge,
 then $G$ contains a subgraph as on Figure~\ref{f:double_edge}. The vertices $a$
 and $v_1$ may or may not be neighbors. Let $C$ be the subgraph of $G$ induced
 by the vertices $u, w, x, y, z$. We get $\bb(C) = 0$ since the independence
 complex of $C$ is the path of length $4$. Consequently, Lemma~\ref{l:cut}
 gives
 $$
 \bb(G) \leq \bb(G - N[v_1]) + \bb(G - N[v_2] - v_1) \leq \Theta_2^{n-4} +
 \Theta_2^{n-5} < (\Theta_2^{-4} + \Theta_2^{-5} + \Theta_2^{-6})\Theta_2^n.
 $$
 This yields the required contradiction.

 \begin{figure}
\begin{center}
\includegraphics{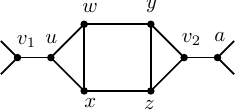}
\caption{Part of $G$ corresponding to a double-edge.}
\label{f:double_edge}
\end{center}
\end{figure}

Now we know that $G'$ is a graph. We distinguish two cases:  either $G'$
contains an induced path of length $2$ or not.

If $G'$ does not contain an induced path of length $2$, let us consider
any vertex $s$ of $G'$. We get that any pair of neighbors of $s$ is adjacent.
Therefore $G'$ is the graph $K_4$ (as $G'$ is cubic and connected as well). We get that $G$ is the graph on the right
part of Figure~\ref{f:contract_triangles} and it remains to bound $\bb(G)$ for this
particular graph. We need to show that $\bb(G) \leq \Theta_2^{12}(\Theta_2^{-4}
+ \Theta_2^{-5} + \Theta_2^{-6}) \approx 15.3893$. Given that $b(G)$ is an
integer, our aim is in fact to show $\bb(G) \leq 15$.

We choose a vertex $v$ of $G$ arbitrarily, and we choose its neighbors $v_1,
v_2, v_3$ so that $v_2$ and $v_3$ are adjacent. We use the usual bound
via Lemma~\ref{l:recurrent_simp} which gives
      $$\bb(G) \leq \bb(G^1) + \bb(G^2) + \bb(G^3).$$
This bound would not be in general sufficient for a vertex $v$ with such a
neighborhood; however, we will show that three summands on the right
hand-side are small enough integers for the graph at hand. The size of $G^1$ is
$8$, the sizes of $G^2$ and $G^3$ are $7$. Since $G$ is a minimal
counterexample, the first summand may be bounded by $\Theta_2^8 \approx
6.3496$. Given that this is an integer, we bound the first summand by $6$.
Similarly, we can bound the remaining two summands, but this time we use the
stronger conclusion of Theorem~\ref{t:K4} which allows to bound each of the
summands by $\Theta_2^7(\Theta_2^{-4} + \Theta_2^{-5} + \Theta_2^{-6}) \approx
4.8473$, that is, we may bound these summands by $4$. Altogether, we get
$\bb(G) \leq 14$ which gives the required contradiction.

Finally, it remains to consider the case when $G'$ contains an induced path of
length $2$. In this case, $G$ contains the subgraph on
Figure~\ref{f:three_triangles}. (Some of the pairs of vertices $w_i$ and $w_j$
may be adjacent.)

\begin{figure}
\begin{center}
\includegraphics{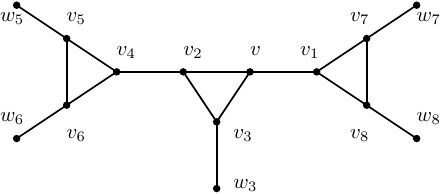}
\caption{Part of $G$ corresponding to an induced path of length $2$.}
\label{f:three_triangles}
\end{center}
\end{figure}

We use the usual bound
via Lemma~\ref{l:recurrent_simp} which gives
      $$\bb(G) \leq \bb(G^1) + \bb(G^2) + \bb(G^3).$$

For the required contradiction, it would not be sufficient to check the orders of the graphs on the right
hand-side. However, we may get a better bound for $\bb(G^1)$
by applying Lemma~\ref{l:recurrent_simp} again to this graph.

For the cut $\{v_4, v_3\}$ in $G^1$ we get
$$
\bb(G - N[v_1]) \leq \bb(G - N[v_1] - N[v_4]) + \bb(G - N[v_1] - N[v_3] - v_4).
$$
The orders of the two graphs on the right hand-side of this inequality are
$n-8$. The orders of the graphs $G- N[v_2] - v_1$ and $G-N[v_3] -
      \{v_1,v_2\}$ are $n-5$. Since $G$ is a minimal counterexample, we get
$$
\bb(G) \leq \Theta_2^n(2\Theta_2^{-5} + 2\Theta_2^{-8}) =
\Theta_2^n(3\Theta_2^{-5}) \leq \Theta_2^n(\Theta_2^{-4} + \Theta_2^{-5} +
\Theta_2^{-6})
$$
as required. (The equality in the middle follows since $\Theta_2 = 2^{1/3}$;
the last inequality follows from Table~\ref{tab:roots}.) This gives the
required contradiction.
\end{proof}

Now we conclude everything to a proof of Proposition~\ref{p:min4}.

\begin{proof}[Proof of Proposition~\ref{p:min4}]
  Let $G$ be a minimal counterexample to Theorem~\ref{t:K4}. By
  Lemma~\ref{l:min3} we know that the minimum degree of $G$ is at least $3$. It
  is, therefore, sufficient to show that $G$ does not contain a vertex of
  degree $3$.

  Since $G$ is $K_4$-free the open neighborhood of any vertex must be
  triangle-free. That is, the open neighborhood of any vertex are either three
  isolated points; an edge and a point; or a path of length $2$. Any of these
  options is ruled out by Lemmas~\ref{l:three_points};~\ref{l:P3}; and~\ref{l:3edge_nec}
  and~\ref{l:P2+I1}, respectively.
\end{proof}

\subsection{Vertices of degree at least $6$}


Now we bound the maximum degree of a possible minimal counterexample.

\begin{lemma}
\label{l:max5}
Let $G$ a minimal counterexample to Theorem~\ref{t:K4},
then the degree of every vertex of $G$ is at most $5$.
\end{lemma}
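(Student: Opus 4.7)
The plan is to apply Lemma~\ref{l:single_vertex} to any vertex $v$ of degree at least $6$, splitting $\bb(G)$ into contributions from $G - v$ and $G - N[v]$, and then invoke the minimality of $G$ to bound each of these using Theorem~\ref{t:K4} applied inductively. By Proposition~\ref{p:min4} a minimal counterexample has minimum degree at least $4$, so the stronger bound in Theorem~\ref{t:K4} has a vacuous hypothesis; being a counterexample therefore forces $\bb(G) > \Theta_2^n$, and it suffices to derive $\bb(G) \leq \Theta_2^n$.

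Concretely, suppose for contradiction that $v \in V(G)$ satisfies $d = \deg(v) \geq 6$. Lemma~\ref{l:single_vertex} yields
$$\bb(G) \leq \bb(G - v) + \bb(G - N[v]).$$
Both $G-v$ and $G-N[v]$ are proper induced subgraphs of the $K_4$-free graph $G$, hence are themselves $K_4$-free with strictly fewer vertices, and therefore not counterexamples. Consequently
$$\bb(G - v) \leq \Theta_2^{n-1}, \qquad \bb(G - N[v]) \leq \Theta_2^{n - d - 1} \leq \Theta_2^{n-7},$$
which combine to give
$$\bb(G) \;\leq\; \Theta_2^n\bigl(\Theta_2^{-1} + \Theta_2^{-7}\bigr).$$

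What remains is a purely numerical verification that $\Theta_2^{-1} + \Theta_2^{-7} \leq 1$. This is exactly the entry $(1,7)$ of Table~\ref{tab:roots}: $r_{1,7} \approx 1.2554 < 1.2599 \approx \Theta_2$, so Lemma~\ref{l:roots} applies and gives the inequality. (Equivalently, one checks $\Theta_2^7 = 2^{7/3} > 5 = 2^2 + 1 = \Theta_2^6 + 1$.) Combining, $\bb(G) \leq \Theta_2^n$, contradicting $\bb(G) > \Theta_2^n$. There is essentially no obstacle in this argument: the only slightly delicate point is checking the root inequality at the boundary value $d = 6$, and larger $d$ only shrinks the second summand further.
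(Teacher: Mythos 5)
Your proof is correct and is essentially identical to the paper's: the same application of Lemma~\ref{l:single_vertex} to a vertex of degree $d \geq 6$, the same bounds $\Theta_2^{n-1} + \Theta_2^{n-d-1} \leq \Theta_2^{n-1} + \Theta_2^{n-7}$ via minimality, and the same numerical fact $r_{1,7} < \Theta_2$ from Table~\ref{tab:roots}. Your observation that Proposition~\ref{p:min4} makes the stronger clause of Theorem~\ref{t:K4} vacuous is exactly how the paper closes the argument as well.
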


\begin{proof}
For contradiction, let $v$ be a vertex
of degree $d \geq 6$ in $G$.
 By Lemma~\ref{l:single_vertex}, we have
$$
\bb(G) \leq \bb(G - v) + \bb(G - N[v]).
$$

Since $G$ is a minimal counterexample to Theorem~\ref{t:K4}, we get that
the right hand side of the inequality above is at most $\Theta^{n-1}_2 + \Theta^{n-(d+1)}_2
\leq \Theta^{n-1}_2 + \Theta^{n-7}_2$.
Since $r_{1,7} < \Theta_2$ (see Table~\ref{tab:roots}), we get $\bb(G) \leq \Theta^{n-1}_2
+ \Theta^{n-7}_2 \leq \Theta^n_2$. Together with Proposition~\ref{p:min4}, this contradicts
the fact that $G$ is a counterexample to Theorem~\ref{t:K4}.

\end{proof}

\subsection{Vertices of degree $4$}

We continue our analysis by excluding vertices of degree $4$. As above, we let
$G$ to be a minimal counterexample on $n$ vertices.
By Proposition~\ref{p:min4} we know that the minimum degree of
$G$ is at least $4$, and by Lemma~\ref{l:max5}, the maximum degree of $G$ is at
most $5$. These are already quite restrictive conditions. On the other hand,
the treatment of vertices of degree $4$ is perhaps the most complicated part of
the proof of Theorem~\ref{t:K4}.

We consider a vertex $v$ of degree $4$ (if it exists). We check its open neighborhood $N(v)$,
and depending on $N(v)$ and on the degrees of vertices of $N(v)$ in $G$, we rule out many
cases how may $N(v)$ look like. Once we rule out these cases, we get graphs
with certain structure; and this structure helps us to estimate $\bb(G)$ more
precisely. This will rule out the remaining cases.

Let $v_1,
\dots, v_4$ denote the vertices of $N(v)$, and recall the bound~\eqref{e:G^i}
$$
\bb(G) \leq \bb(G^1) + \bb(G^2) + \bb(G^3) + \bb(G^4),
$$
where $G^i$ stands for $G - N[v_i] - \{v_1, \dots, v_{i-1}\}$.
We will often alternate the order of the vertices $v_1, \dots, v_4$ in order to
get the best bound.

We also recall that $k_i$ is set up so that
$G^i$ has $n- k_i$ vertices. If we show that
$r_{k_1,\dots,k_4} \leq \Theta_2$, then we are done, since we obtain
$$
\bb(G) \leq \Theta_2^n
$$
by Lemma~\ref{l:roots}.
This is the required contradiction.
In particular, we achieve this task, if $(k_1, \dots, k_4) \geq (6,6,6,6)$ or
$(k_1, \dots, k_4) \geq (5,6,7,7)$, up to possibly permuting $k_1, \dots, k_4$;
see Table~\ref{tab:roots}.
(This is not the same as permuting $v_1, \dots, v_4$; permuting the vertices
may yield an essentially different values of $k_1, \dots, k_4$.) On the other
hand, it is insufficient to achieve that $(k_1, \dots, k_4)$ is $(5,5,7,7)$
or $(5, 6, 6, 7)$, since $r_{k_1,\dots,k_4} > \Theta_2$ in these cases (very
tightly). This will complicate our analysis.

Now, let us inspect the possible neighborhoods $N(v)$. Since $G$ is $K_4$-free,
we get that $N(v)$ is triangle-free. There are $7$ options for the isomorphism
class of $N(v)$ depicted on Figure~\ref{f:neighborhoods4}.

\begin{figure}
\begin{center}
\includegraphics{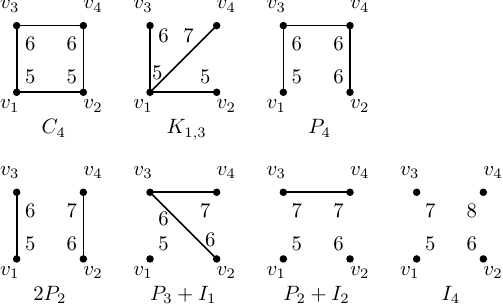}
\caption{The possible isomorphism classes of $N(v)$.
The number at a vertex $v_i$ denotes $k_i$ under the condition that the degree of
$v_i$ in $G$ is $4$.
}
\label{f:neighborhoods4}
\end{center}
\end{figure}

The discussion above immediately gives that the last two options cannot occur for a minimal
counterexample.

\begin{lemma}
\label{l:I4}
  Let $G$ be a minimal counterexample to Theorem~\ref{t:K4}. Then $G$ does not
  contain vertex $v$ such that $N(v)$ is isomorphic to $I_4$ or $P_2 + I_2$; see
  Figure~\ref{f:neighborhoods4}.
\end{lemma}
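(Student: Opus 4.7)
The plan is to apply the expansion bound~\eqref{e:G^i} at the degree-$4$ vertex $v$ with a carefully chosen ordering of $v_1,\dots,v_4$, and then invoke Lemma~\ref{l:roots} with values from Table~\ref{tab:roots} to force $\bb(G)\leq\Theta_2^n$, contradicting minimality. By Proposition~\ref{p:min4} every $N[v_i]$ has size at least $5$, and by minimality of $G$ each summand satisfies $\bb(G^i)\leq\Theta_2^{n-k_i}$, where $k_i=|N[v_i]\cup\{v_1,\dots,v_{i-1}\}|$. Hence it suffices to exhibit an ordering giving $r_{k_1,\dots,k_4}\leq\Theta_2$; since $r_{5,6,7,7}\approx 1.2519<\Theta_2$, the target is to dominate the tuple $(5,6,7,7)$ entry-wise.

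For $N(v)\cong I_4$, no two neighbors of $v$ are adjacent, so $\{v_1,\dots,v_{i-1}\}$ is disjoint from $N[v_i]$ for every $i$, giving $k_i\geq 5+(i-1)$ and thus $(k_1,\dots,k_4)\geq(5,6,7,8)$. By entry-wise monotonicity of $r_{a_1,\dots,a_4}$, we get $r_{k_1,\dots,k_4}\leq r_{5,6,7,7}<\Theta_2$, so Lemma~\ref{l:roots} yields $\bb(G)\leq\Theta_2^n$ and closes the case.

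For $N(v)\cong P_2+I_2$, I will label so that $v_1,v_2$ are the two isolated vertices of $N(v)$ and $v_3v_4$ is the edge. Then no pair among $v_1,v_2,v_3$ and $v_1,v_2,v_4$ is adjacent in $N(v)$, and the only edge inside $N(v)$ is $v_3v_4$. A direct count yields $k_1\geq 5$, $k_2\geq 6$, $k_3\geq 7$, and $k_4\geq 5+2=7$ (the contribution of $v_3$ to the prefix is absorbed since $v_3\in N[v_4]$, but $v_1,v_2\notin N[v_4]$ still contribute $2$). Thus $(k_1,\dots,k_4)\geq(5,6,7,7)$ and the same application of Lemma~\ref{l:roots} finishes the proof.

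The only delicate point is the ordering in the second case: placing an endpoint of the edge first would yield a tuple such as $(5,5,7,8)$ or $(5,6,6,8)$, where $r_{\cdots}$ is very slightly above $\Theta_2$ and the bound just fails (consistent with the warning preceding the lemma that tuples like $(5,5,7,7)$ and $(5,6,6,7)$ are insufficient). Putting both isolated neighbors at the front is exactly what provides the extra deleted vertex needed to push the tuple past the threshold $(5,6,7,7)$; no other combinatorial subtlety arises.
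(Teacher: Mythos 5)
Your proof is correct and is essentially the paper's argument: the paper simply reads off $(k_1,\dots,k_4)\geq(5,6,7,7)$ from the labeling in Figure~\ref{f:neighborhoods4} (isolated vertices of $N(v)$ ordered first), which is exactly the count you carry out explicitly. One small inaccuracy in your closing remark: the alternative orderings you mention are not actually fatal, since $r_{5,6,6,8}\approx 1.2541<\Theta_2$ appears in Table~\ref{tab:roots} and $\Theta_2^{-5}+\Theta_2^{-5}+\Theta_2^{-7}+\Theta_2^{-8}<1$ as well, so those tuples would also close the case --- but this does not affect the validity of the argument you give.
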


\begin{proof}
For contradiction, there is such $v$ in the minimal counterexample.
  We already know that we may assume that the minimum degree of $G$ is at least
$4$. Therefore, from the discussion above we get $(k_1, \dots, k_4) \geq
(5,6,7,7)$, which yields the required contradiction.
\end{proof}

Our next task is to show that if $G$ is a minimal counterexample which contains
a vertex of degree $4$, then $G$ is actually $4$-regular. We do this in two
steps. First we significantly restrict the possible isomorphism classes of
$N(v)$ where $v$ is a vertex of degree $4$ incident to a vertex of degree $5$.
Next, we analyze the remaining options in more details so that we may rule them
out as well.

\begin{lemma}
\label{l:45}
  Let $G$ be a minimal counterexample to Theorem~\ref{t:K4}. Let $v$ be a
  vertex of degree $4$ in $G$, which is incident to a vertex of degree greater or equal to
  $5$. Then one of the following options hold.
  \begin{enumerate}[$(a)$]
    \item $N(v)$ is isomorphic to $C_4$, one vertex of $N(v)$ has degree $5$ in
      $G$ and the three remaining vertices have degrees $4$ in $G$.
    \item $N(v)$ is isomorphic to $C_4$, two opposite vertices of $N(v)$ have
      degrees $5$ in
      $G$ and the two remaining vertices have degrees $4$ in $G$.
    \item $N(v)$ is isomorphic to $K_{1,3}$, one vertex of $N(v)$ has degree $5$ in
      $G$ and the three remaining vertices have degrees $4$ in $G$.
    \end{enumerate}
\end{lemma}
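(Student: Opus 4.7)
The plan is to feed each of the remaining isomorphism classes of $N(v)$ into Equation \eqref{e:G^i} combined with Lemma \ref{l:roots} and derive a contradiction unless we land in one of (a), (b), (c). By Lemma \ref{l:I4} we may already assume $N(v) \in \{2P_2,\, P_3+I_1,\, P_4,\, C_4,\, K_{1,3}\}$, and by Proposition \ref{p:min4} and Lemma \ref{l:max5} every $v_i$ has degree exactly $4$ or $5$, so a degree-$5$ neighbor contributes $|N[v_i]|=6$ rather than $5$ to $k_i$. The mechanism is simple: for each class and each assignment of degrees on $N(v)$ (with at least one $v_i$ of degree $5$), I would search through orderings of $v_1,\dots,v_4$ for one whose tuple $(k_1,\dots,k_4)$ satisfies $r_{k_1,\dots,k_4} \leq \Theta_2$. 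When such an ordering exists, Lemma \ref{l:roots} and minimality of $G$ yield $\bb(G) \leq \sum_i \Theta_2^{n-k_i} \leq \Theta_2^n$, contradicting that $G$ is a counterexample.

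Concretely, my bookkeeping would proceed as follows. For $N(v)=2P_2$, placing the degree-$5$ vertex in the independent edge that comes second gives $(k_1,\dots,k_4) \geq (5,5,7,8)$, and $r_{5,5,7,8} \leq \Theta_2$ follows from Table \ref{tab:roots} since $\Theta_2^{-5}+\Theta_2^{-5}+\Theta_2^{-7}+\Theta_2^{-8}<1$. For $N(v)=P_3+I_1$, one checks case-by-case (center vs. endpoint vs. isolated vertex of $N(v)$ carrying the extra degree) that some ordering achieves $(5,6,7,7)$ or $(6,6,6,7)$, both of which are already tabulated. For $N(v)=P_4$, having any one vertex of degree $5$ suffices to obtain the tuple $(6,6,6,6)$, with $r_{6,6,6,6}=\Theta_2$ exactly. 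For $N(v)=C_4$, I would show that any configuration with two \emph{adjacent} degree-$5$ vertices, or with three or more degree-$5$ vertices, admits an ordering with tuple $(6,6,6,6)$, leaving exactly options (a) and (b). For $N(v)=K_{1,3}$, similarly, two or more degree-$5$ vertices (leaf-leaf, leaf-center, etc.) always admit an ordering reaching something like $(5,5,7,8)$ or $(5,6,6,8)$, leaving exactly option (c).

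The main obstacle is the sheer quantity of sub-cases to enumerate rather than any single difficult step: there are five shapes of $N(v)$, each with several ways to distribute the degree-$5$ neighbors, and inside each combination there are $4! = 24$ orderings of $v_1,\dots,v_4$ to sift through. The monotonicity of $r_{a_1,\dots,a_t}$ in each argument reduces many of these checks, and intuitively the high-degree neighbor should be placed at the latest-possible position not already adjacent to earlier $v_j$'s (so that the boost $|N[v_i]|=6$ compounds with previously-excluded vertices). Once one adopts this heuristic for choosing the ordering, each individual verification becomes a mechanical comparison against Table \ref{tab:roots}, and the residual configurations are exactly (a), (b), (c) as claimed.
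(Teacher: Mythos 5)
Your proposal is correct and follows essentially the same route as the paper: enumerate the possible isomorphism types of $N(v)$, use Lemma~\ref{l:recurrent_simp} with a well-chosen ordering of $v_1,\dots,v_4$ so that the resulting tuple $(k_1,\dots,k_4)$ satisfies $r_{k_1,\dots,k_4}\leq\Theta_2$, and observe that only the configurations (a), (b), (c) escape this mechanism. The only cosmetic difference is in which orderings you pick (e.g.\ you reach $(5,5,7,8)$ for $2P_2$, which is not literally tabulated but is verified by the direct computation $2\Theta_2^{-5}+\Theta_2^{-7}+\Theta_2^{-8}<1$, whereas the paper reorders to land on tabulated tuples such as $(5,6,7,7)$ and $(5,6,6,8)$).
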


\begin{proof}
Let $v$ be the vertex from the statement. We gradually exclude all remaining
cases. By Lemma~\ref{l:I4}, we already know that $N(v)$ is not isomorphic to $P_2
+ I_2$ or $I_4$.

First let us consider the case that $N(v)$ is isomorphic to $2P_2$ or $P_3 +
I_1$. Let us choose $v_1, \dots, v_4$ according to
Figure~\ref{f:neighborhoods4}. Since one of the vertices $v_1, \dots, v_4$ has
degree $5$, we get that $(k_1, \dots, k_4) \geq (6,6,6,7)$ or $(k_1, \dots,
k_4) \geq (5,6,7,7)$ or $(k_1, \dots, k_4) \geq (5,6,6,8)$. Therefore this
option is excluded since the three roots $r_{6,6,6,6}$, $r_{5,6,7,7}$, and
$r_{5,6,6,8}$ are less than $\Theta_2$; see Table~\ref{tab:roots}.

Now let us consider the case that $N(v)$ is isomorphic to $P_4$. At least one
vertex of $N(v)$ has degree $5$ in $G$. Up to isomorphism, there are two
(non-exclusive) options depicted at Figure~\ref{f:P4}. Depending on these
options we label the vertices of $N(v)$ by $v_1, \dots, v_4$ according to
Figure~\ref{f:P4}. In both cases, we get $(k_1, \dots, k_4) \geq (6,6,6,6)$
which contradicts that $G$ is a minimal counterexample.

\begin{figure}
\begin{center}
\includegraphics{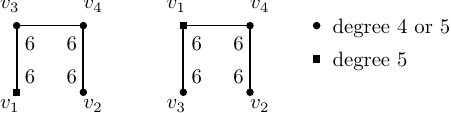}
\caption{Excluding the case that $N(v)$ is isomorphic to $P_4$. The label at
$v_i$ denotes $k_i$ under the condition that all bullet vertices have degree
$4$. (In general, it is a lower bound for $k_i$.)}
\label{f:P4}
\end{center}
\end{figure}

Let us continue with the case that $N(v)$ is isomorphic to $K_{1,3}$.
If there is only one vertex in $N(v)$ of degree $5$ in $G$, we get the
case $(c)$ of the statement of this lemma. Therefore, we may assume that
$N(v)$ contains at least two vertices of degree $5$ in $G$ and we want to
exclude this case. We label the vertices of $N(v)$ by $v_1, \dots, v_4$
according to Figure~\ref{f:neighborhoods4}. Up to a self-isomorphism of
$K_{1,3}$ we may assume that $v_2$ has degree $5$ and also $v_1$ or
$v_3$ has degree $5$. Therefore, we get $(k_1, \dots, k_4) \geq (6,6,6,7)$ or
$(k_1, \dots, k_4) \geq (5,6,7,7)$ which gives the required contradiction.

Finally, it remains to consider the case that $N(v)$ is isomorphic to $C_4$. In
this case, it is sufficient to exclude the case that $N(v)$ contains two vertices of
degree $5$ in $G$ which are neighbors. If there are two such vertices, we label
the vertices of $N(v)$ by $v_1, \dots, v_4$ so that $v_1$ and $v_2$ have
degrees $5$. Then $(k_1, \dots, k_4) \geq (6, \dots, 6)$ which is the required
contradiction.
\end{proof}

\begin{figure}
\begin{center}
\includegraphics{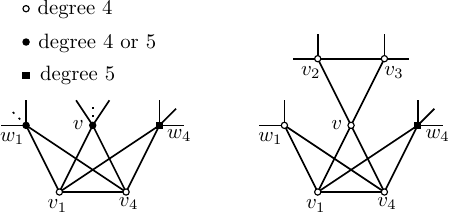}
\caption{Forbidden subgraph.}
\label{f:three_peaks}
\end{center}
\end{figure}

\begin{lemma}
\label{l:three_peaks}
  Let $G$ be a minimal counterexample to Theorem~\ref{t:K4}. Then $G$ does not
  contain the graph on $5$ vertices from Figure~\ref{f:three_peaks}, left, as an
  induced subgraph, where $\deg(v_1) = \deg(v_4) = 4$, $\deg(w_4) = 5$ and
  $\deg(w_1), \deg(v) \in \{4,5\}$.
\end{lemma}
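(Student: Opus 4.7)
I would argue by contradiction: assume $G$ contains the induced five-vertex subgraph $H$ from Figure~\ref{f:three_peaks} with $\deg_G(v_1)=\deg_G(v_4)=4$, $\deg_G(w_4)=5$, and $\deg_G(w_1),\deg_G(v)\in\{4,5\}$. Since $G$ is $K_4$-free, the neighborhood of every vertex is triangle-free, so the induced structure on $H$ together with these degree constraints tightly restricts what each $N[\cdot]$ can look like beyond $H$. The strategy, as in the previous lemmas, is to apply Lemma~\ref{l:recurrent_simp} at a well-chosen degree-$4$ vertex of $H$, with a specific ordering of its neighbors, so as to drive the tuple $(k_1,\dots,k_4)$ above one of the entries of Table~\ref{tab:roots} whose root is at most $\Theta_2$ (e.g.\ $(5,6,7,7)$, $(5,6,6,8)$, or $(6,6,6,6)$); Lemma~\ref{l:roots} then gives $\bb(G)\le\Theta_2^n$, contradicting minimality.

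Concretely, I would apply Lemma~\ref{l:recurrent_simp} at $v_4$ (degree $4$) and order its four neighbors so that $w_4$ (degree $5$) comes last. Each of $v_4$'s other neighbors has degree at least $4$ by Proposition~\ref{p:min4}, so the first three summands automatically contribute $k_i\ge 5$. Because $N(v_4)$ is triangle-free and $w_4$ has degree $5$, when $w_4$ is processed last the set $\{v^{(1)},\dots,v^{(3)}\}$ removes one vertex from $N[w_4]$, leaving at least $5$ fresh vertices, so $k_4\ge 5+3=8$ generically. The presence of $v$ (another vertex in the configuration) guarantees that among the first three neighbors at least one has an additional unshared neighbor inside $H$, pushing one further $k_i$ up by $1$. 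Combined with the analogous bookkeeping at $v_1$, one should obtain $(k_1,\dots,k_4)$ entrywise $\ge (5,6,6,8)$ or $(5,6,7,7)$.

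The main obstacle is the possibility of coincidences between the outside-of-$H$ neighbors of $v_4,w_4,v,v_1,w_1$, which could shrink $|N[v^{(i)}]\setminus(\{v^{(1)},\dots,v^{(i-1)}\}\cup N[v_4])|$ and thus reduce the $k_i$'s below the desired threshold. I would handle this by case analysis: if two of the outside neighbors coincide, the affected vertex sits in a small vertex cut of $G$ (cf.\ Figure~\ref{f:three_peaks} right, which is presumably the degenerate case), and I would then pass from Lemma~\ref{l:recurrent_simp} to Lemma~\ref{l:cut} using this cut; the extra multiplicative factor $\bb(C)\bb(G')$ is controlled because $C$, being a very small induced subgraph of a $K_4$-free graph, has $\bb(C)\in\{0,1\}$, so the cut term either vanishes or gives the even stronger bound $\Theta_2^{n-|H|}$. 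In every subcase the total bound falls below $\Theta_2^n$, establishing the desired contradiction.

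In short, the argument is the same ``enumerate $(k_1,\dots,k_4)$ against Table~\ref{tab:roots}'' template used in Lemmas~\ref{l:I4} and~\ref{l:45}, but exploiting the specific feature that in the three-peaks configuration one vertex ($w_4$) has forced degree $5$, which is exactly the extra $+1$ needed to tip the balance below $\Theta_2$; the serious work is the combinatorial case analysis of how the outside neighborhoods of the five vertices can overlap.
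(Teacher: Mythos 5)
The approach you sketch does not close the gap; the numbers simply do not come out. After Lemma~\ref{l:45} and Lemma~\ref{l:I4} one finds that $N(v_4)\cong K_{1,3}$ with $v_1$ as its center (and $\deg v_1 = 4$, $\deg w_4 = 5$). The center of a $K_{1,3}$ neighborhood always contributes only $k_i = \deg(v_1)+1 = 5$, regardless of when it is processed, because the other three neighbors of $v_4$ all lie in $N(v_1)$ and are absorbed. So every ordering of $N(v_4)$ yields a tuple that, after sorting, is at best $(5,5,6,8)$ or $(5,6,6,7)$; you can check (or compare to Table~\ref{tab:roots}) that both satisfy $r_{k_1,\dots,k_4} > \Theta_2$, so the raw recursion at $v_4$ is \emph{not} sufficient, contrary to your claim of reaching $(5,6,7,7)$ or $(5,6,6,8)$. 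The extra $+1$ from $w_4$'s degree is eaten by the low contribution of $v_1$. Your fallback to Lemma~\ref{l:cut} for ``coincidences'' is also not where the obstruction lies: the tuple is deficient even when there are no coincidences.

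The paper's proof uses an idea your proposal misses entirely. It applies Lemma~\ref{l:recurrent_simp} at $v$ (whose neighborhood is shown to be $2P_2$ with all neighbors of degree $4$), obtaining the insufficient tuple $(5,6,6,7)$; the rescue is to invoke the \emph{secondary, refined} conclusion of Theorem~\ref{t:K4} on two of the four residual graphs $G^i$: one checks that $G^2$ and $G^3$ each contain a vertex (namely $v_4$) of degree at most $3$ that is not in a triangle component, so those two summands get the extra multiplicative factor $\Theta_2^{-4}+\Theta_2^{-5}+\Theta_2^{-6}\approx 0.9618$. This effectively upgrades the tuple to $(5,7,10,10,11,11,12,12)$, whose root is at most $\Theta_2$. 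Without this two-level use of the minimality hypothesis (applying the stronger clause of Theorem~\ref{t:K4} inside a single step of the recursion), the case cannot be ruled out, and your proposal does not supply a substitute for it.
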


\begin{proof}
  Assume by contradiction $G$ contains such a subgraph, and observe that the neighborhood $N(v_1)$ is
  isomorphic to $K_{1,3}$. Therefore, Lemma~\ref{l:45} gives that $\deg(v) =
  \deg(w_1) = 4$.

  Now, let us focus on $N(v)$. Since we know all neighbors of $v_1$ and $v_4$,
  we get that $N(v)$ is isomorphic to one of the graphs $2P_2$ or $P_2 + I_2$.
  But Lemma~\ref{l:I4} excludes the latter case. In addition, Lemma~\ref{l:45}
  implies that all neighbors of $v$ have degree $4$ in $G$. Let $v_2$ and $v_3$
  be the two remaining neighbors of $v$; see Figure~\ref{f:three_peaks}, right.

  Now, we want to use Lemma~\ref{l:recurrent_simp} on $v$. In this case $(k_1,
  \dots, k_4) = (5,6,6,7)$ which is not sufficient but we may gain a slight
  improvement if we inspect the graphs on the right hand-side of
  Lemma~\ref{l:recurrent_simp} in this case:
  \begin{equation}
  \bb(G) \leq \bb(G^1) + \bb(G^2) + \bb(G^3) + \bb(G^4).
  \label{e:peaks}
  \end{equation}
  We have that the size of $G^i$ is $n - k_i$. However, we may also check that
  $G^2 = G - N[v_2] - v_1$ contains a vertex of degree at most $3$ which is not
  contained in a component of $G^2$ consisting of a single triangle. Indeed,
  $v_4$ is such a vertex. (Note that $w_1$ or $w_4$ may or may not belong to
  $G^2$). Similarly, $G^3$ contains a vertex of degree at most $3$ which is not
    contained in a component of $G^3$ consisting of a single triangle, which is
    again witnessed by $v_4$.

  Therefore,
  since $G$ is a minimal counterexample to Theorem~\ref{t:K4}, we get
  $$
  \bb(G^2), \bb(G^3) \leq \Theta_2^{n-6}(\Theta_2^{-4} + \Theta_2^{-5} +
  \Theta_2^{-6}).
  $$
  Hence~\eqref{e:peaks} gives
  $$
  \bb(G) \leq \Theta_2^n (\Theta_2^{-5} + (\Theta_2^{-6} +
  \Theta_2^{-6})(\Theta_2^{-4} + \Theta_2^{-5} +
  \Theta_2^{-6}) + \Theta_2^{-7}).
  $$

  We get a contradiction to the assumption that $G$ is a counterexample to Theorem~\ref{t:K4} as $r_{5,7,10,10,11,11,12,12} \leq \Theta_2$; see Table~\ref{tab:roots}.

\end{proof}

Now we have enough tools to exclude the remaining cases of Lemma~\ref{l:45}.

\begin{lemma}
\label{l:4regular}
  Let $G$ be a minimal counterexample to Theorem~\ref{t:K4}. If $G$ contains a
vertex of degree $4$, then $G$ is $4$-regular.
\end{lemma}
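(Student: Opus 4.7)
Suppose for contradiction that $G$ is a minimal counterexample to Theorem~\ref{t:K4} that has a vertex of degree $4$ but is not $4$-regular. By Lemma~\ref{l:connected}, $G$ is connected; by Proposition~\ref{p:min4} and Lemma~\ref{l:max5}, every vertex of $G$ has degree $4$ or $5$; therefore there must exist a degree-$4$ vertex $v$ that has at least one neighbour of degree $5$. Lemma~\ref{l:45} forces $N(v)$ into one of the three configurations $(a)$, $(b)$, $(c)$ listed there, and the plan is to rule each of them out.

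In each configuration I would label the neighbours $v_1,\dots,v_4$, apply~\eqref{e:G^i}, and compute the quadruple $(k_1,\dots,k_4)$ from the structure of $N(v)$ and the specified degrees. Using only the base bound $\bb(G^i)\le\Theta_2^{n-k_i}$ is not enough: for instance case $(c)$ naturally yields $(k_1,\dots,k_4)=(5,6,6,7)$, and one checks directly that $r_{5,6,6,7}>\Theta_2$. The idea, exactly as in the proof of Lemma~\ref{l:three_peaks}, is to replace as many as possible of the $\bb(G^i)$ by the stronger alternative bound $\bb(G^i)\le(\Theta_2^{-4}+\Theta_2^{-5}+\Theta_2^{-6})\Theta_2^{n-k_i}$ from the second conclusion of Theorem~\ref{t:K4}. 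To license that replacement in $G^i$ I need to exhibit a vertex of degree $\le 3$ in $G^i$ that is not sitting in a triangle component: since three of the four neighbours $v_1,\dots,v_4$ of $v$ have degree $4$ in $G$ (in all three cases of Lemma~\ref{l:45}), after deleting $N[v_i]$ plus $\{v_1,\dots,v_{i-1}\}$ the remaining $v_j$'s lose enough incidences to fall to degree $\le 3$, and a little case checking shows they typically avoid triangle components. With one or two such strengthened summands the resulting inequality reduces to one of the root comparisons tabulated in Table~\ref{tab:roots} (of the type $r_{5,7,10,10,11,11,12,12}\le\Theta_2$ or $r_{6,6,9,10,11,11,12,13}\le\Theta_2$ that already appeared), giving $\bb(G)\le\Theta_2^n$ and hence a contradiction.

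A second pillar of the argument is Lemma~\ref{l:three_peaks}: several of the subcases of configuration $(c)$ (where $N(v)\cong K_{1,3}$) in fact contain the forbidden $5$-vertex induced subgraph of that lemma, so they are dispatched immediately without any Betti computation. I would first peel off all such subcases, and then handle the remaining ones (the $C_4$-neighbourhood cases $(a)$ and $(b)$, and the residual $K_{1,3}$-cases) by the refined bound described above. The main obstacle I expect is the tightness of the numerics: the ``vanilla'' tuple $(k_1,\dots,k_4)$ falls short of $\Theta_2$ by only a hair in several cases, so one has to locate the right degree-$\le 3$ witness in the right $G^i$ and rule out the annoying possibility that this witness lies in a triangle component of $G^i$. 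When the latter does occur, the remedy is to apply Lemma~\ref{l:recurrent_simp} once more inside that particular $G^i$ (exactly as in the ``cubic graph with three triangles'' step at the end of Lemma~\ref{l:P2+I1}) and re-tally; this deepens the case analysis but stays within the same toolkit.
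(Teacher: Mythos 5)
Your setup (connectedness, degrees in $\{4,5\}$ by Proposition~\ref{p:min4} and Lemma~\ref{l:max5}, a degree-$4$ vertex $v$ with a degree-$5$ neighbour, trichotomy from Lemma~\ref{l:45}) matches the paper, and your use of Lemma~\ref{l:three_peaks} to kill part of case $(c)$ is also what the paper does. But there is a genuine gap in your plan for the $C_4$-neighbourhood cases $(a)$ and $(b)$: the refined-bound strategy cannot close them numerically. In case $(a)$ the best ordering of the neighbours gives only $(k_1,\dots,k_4)=(5,6,6,6)$ (each $v_i$ on the $4$-cycle keeps two of the other $v_j$'s inside $N[v_i]$, so the sets $N[v_i]\cup\{v_1,\dots,v_{i-1}\}$ overlap heavily), whence $\sum_i\Theta_2^{-k_i}=2^{-5/3}+3\cdot 2^{-2}\approx 1.065$. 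Even if you could apply the stronger conclusion of Theorem~\ref{t:K4} to \emph{all four} summands you would only multiply by $\Theta_2^{-4}+\Theta_2^{-5}+\Theta_2^{-6}\approx 0.9618$, leaving $\approx 1.024>1$. So no choice of degree-$\le 3$ witnesses can rescue this case, and case $(b)$ ($\sum_i\Theta_2^{-k_i}\approx 1.013$) is almost as tight and would require the strengthened bound on several $G^i$, which you cannot license since Lemma~\ref{l:no_triangle} needs $4$-regularity and the whole point here is that $G$ is not $4$-regular. This is not a matter of ``deepening the case analysis within the same toolkit''; the tool itself is insufficient.

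What the paper actually does for $(a)$ and $(b)$ is a structural pivot that your proposal is missing: it switches attention from the degree-$4$ vertex $v$ to its degree-$5$ neighbour $w$ and shows, by repeatedly inspecting the ($C_4$- or $K_{1,3}$-shaped) neighbourhoods of the degree-$4$ vertices around $w$, that $N(w)$ must be isomorphic to $C_5$ or to $C_4+I_1$. In the first case $G$ is forced to be the explicit $7$-vertex graph of Figure~\ref{f:bipyramid}, whose independence complex has $\bb(G)=2\le\Theta_2^7$; in the second case one finds a second apex $x$ over the $C_4$, so $\{w,x\}$ is a $2$-cut and Lemma~\ref{l:cut} yields $\bb(G)\le 3\Theta_2^{n-6}=\frac34\Theta_2^n$. (Also, the first subcase of case $(c)$ is settled in the paper not by a Betti estimate but by a parity-of-neighbourhood argument: $N(v_2)$ would have to be $C_4$ or $K_{1,3}$ by Lemma~\ref{l:45}, yet it contains $v$ with degree $1$ and $v_1$ with degree $\le 2$ inside it, which is impossible.) You need some argument of this explicit structural kind; the counting machinery alone does not suffice here.
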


\begin{proof}
  We know that $G$ is connected by Lemma~\ref{l:connected}, and has minimal degree $4$ by Proposition~\ref{p:min4}. For contradiction, let us suppose that $G$
contains a vertex of degree $4$ but $G$ is not $4$-regular. In particular, $G$
contains a vertex $v$ of degree $4$ which is incident to a vertex of degree
$5$; thus one of the three options (a,b,c) in Lemma~\ref{l:45} must hold.

First, let us consider the case $(c)$, that is, $N(v)$ is isomorphic to
$K_{1,3}$ and $v$ is incident to exactly one vertex of degree $5$. Let us label
the vertices of $N(v)$ as in Figure~\ref{f:neighborhoods4}. Now, there are two
subcases, either $v_1$ is the vertex of degree $5$, or, without loss of
generality, $v_2$ is the vertex of degree $5$.

In the first subcase, $v_1$ has a single neighbor $w$ different from $v$, $v_2$,
$v_3$ and $v_4$.
Now let us describe $N(v_2)$. By Lemma~\ref{l:45}, $N(v_2)$ is
isomorphic to $C_4$ or $K_{1,3}$ ($v_1$ is a neighbor of $v_2$ of degree $5$).
In addition, $v$ and $v_1$ belong to $V(N(v_2))$. We also have
$\deg_{N(v_2)}(v) = 1$ since $\deg_G(v) = 4$ and $v_2$ and $v_3$ are not
neighbors of $v_2$. Similarly, we deduce $\deg_{N(v_2)}(v_1) \leq 2$. This
rules out both options, $C_4$ and $K_{1,3}$ for the isomorphism class of
$N(v_2)$. A contradiction.

In the second subcase, we suppose that $v_2$ has degree $5$ in $G$. Therefore,
$v_1$ has degree $4$ in $G$ and consequently $N(v_1)$ is isomorphic to
$K_{1,3}$. Therefore, up to relabeling of the vertices, $G$ contains the induced
subgraph from Lemma~\ref{l:three_peaks}. This gives the required contradiction.

This way, we have ruled out the case $(c)$ of Lemma~\ref{l:45}. Therefore, we
may assume that any vertex $v$ of degree $4$ of $G$, incident to a vertex of
degree $5$, falls into the case $(a)$ or $(b)$ of Lemma~\ref{l:45}.

Now, let us consider an arbitrary vertex $w$ of degree $5$, incident to a
vertex $v$ of degree $4$. Our aim is to show that $N(w)$ is isomorphic either
to $C_5$, or to $C_4 + I_1$; see Figure~\ref{f:C5_C4}.

By inspecting $N(v)$, we get that $v$ and $w$ have
two common neighbors, say $v_1$ and $v_2$, which are not incident. Now, we
analogously inspect the $4$-vertex graphs $N(v_1)$, $N(v_2)$ and so on (for the other vertices of degree $4$ incident to $w$), and we arrive at one of the two cases in Figure~\ref{f:C5_C4} (keeping the degree notation of Figure~\ref{f:three_peaks}).

\begin{figure}
\begin{center}
\includegraphics{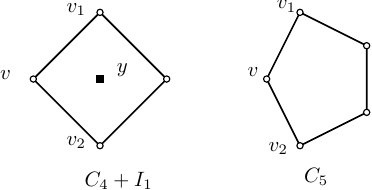}
\caption{Possible isomorphism types of $N(w)$.}
\label{f:C5_C4}
\end{center}
\end{figure}

Therefore, it is sufficient to distinguish two subcases according to the
isomorphism type of $N(w)$.

First we suppose that $N(w)$ is isomorphic to $C_5$. Then
all vertices of $N(w)$ have degree $4$ in $G$. Let us label the vertices
of $N(w)$ according to Figure~\ref{f:C5_C4}
and let $x$ be the neighbor of $v$ different from $w$, $v_1$ and $v_2$. By
checking $N(v)$ again, we see that $v_1$ and $v_2$ are neighbors of $x$ as
well. Then by checking $N(v_1)$ and $N(v_2)$ we get that all vertices of $N(w)$
are incident to $x$, and we get that $G$ is the graph on
Figure~\ref{f:bipyramid}. In this case, we easily observe that the independence
complex of $G$ consists of an edge and a cycle. Therefore $\bb(G) = 2 \leq
\Theta_2^7$ which is the required contradiction.

\begin{figure}
\begin{center}
\includegraphics{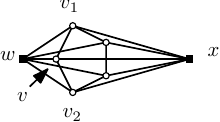}
\caption{The resulting $G$ if $N(w)$ is isomorphic to $C_5$.}
\label{f:bipyramid}
\end{center}
\end{figure}

Now, we suppose that $N(w)$ is isomorphic to $C_4 + I_1$. Let us again label the
vertices of $N(w)$ according to Figure~\ref{f:C5_C4}. In this case, the
four vertices of $C_4$ have degree $4$ in $G$. (The last vertex $y$ has degree
$5$ in $G$, but we do not need this information.) Analogously to the previous
case, we deduce that there is another vertex $x$ incident to the vertices of
$C_4$ in $N(w)$. The degree of $x$ in $G$ may be $4$ or $5$. See
Figure~\ref{f:octahedron}.
\begin{figure}
\begin{center}
\includegraphics{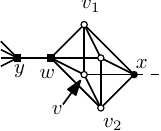}
\caption{The resulting $G$ if $N(w)$ is isomorphic to $C_4 + I_1$.}
\label{f:octahedron}
\end{center}
\end{figure}
%
%
Let us apply Lemma~\ref{l:cut} to the cut formed by the
vertices $w$ and $x$ (in this order, which is relevant for the lemma). We obtain
$$
\bb(G) \leq 1\cdot \Theta_2^{n-6} + \Theta_2^{n-6} + \Theta_2^{n-6}= \frac34\Theta_2^n.
$$
As usual, this contradicts that $G$ is a counterexample to Theorem~\ref{t:K4}.
\end{proof}

\paragraph{4-regular graphs.}
Now we know that if a minimal counterexample $G$ contains a vertex of degree
$4$, then it must be $4$-regular. Our next step is to rule out this case.

We will often need to check that a certain graph satisfies the stronger
condition in the statement of Theorem~\ref{t:K4}. Here is a useful sufficient
condition which allows us to avoid distinguishing various special cases.

\begin{lemma}
\label{l:no_triangle}
Let $G$ be a connected $4$-regular graph and let $H$ be a proper subgraph of
$G$ such that the number of vertices of $H$ is not divisible by $3$. Then $H$
contains a vertex of degree at most $3$ in $H$ which is not in a component
consisting of a single triangle.
\end{lemma}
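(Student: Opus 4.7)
My plan is to argue by contradiction. Suppose that every vertex of $H$ with $H$-degree at most $3$ lies in a component of $H$ that is a single triangle. Since $G$ is $4$-regular, every vertex of $H$ has $H$-degree between $0$ and $4$, so under this assumption each vertex of $H$ either lies in a triangle component (and has $H$-degree $2$) or has $H$-degree exactly $4$.

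Next I would partition $V(H)$ into $V_T$, the union of all triangle components of $H$, and $V_4 := V(H) \setminus V_T$, on which every vertex has $H$-degree $4$. The crucial step is to show that $V_4$ is closed under taking $G$-neighbors. A vertex $u \in V_4$ satisfies $d_H(u) = 4 = d_G(u)$, so every $G$-edge at $u$ is in fact an $H$-edge, and in particular each $G$-neighbor of $u$ belongs to $V(H)$. Such a $G$-neighbor $w$ cannot belong to a triangle component $T$ of $H$: otherwise the only $H$-neighbors of $w$ would be the other two vertices of $T$, so the edge $uw \in H$ would force $u \in T \subseteq V_T$, contradicting $u \in V_4$. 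Hence every $G$-neighbor of $u$ lies in $V_4$.

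Because $G$ is connected and $V_4$ is closed under $G$-adjacency, either $V_4 = \emptyset$ or $V_4 = V(G)$. If $V_4 = \emptyset$, then $V(H) = V_T$ is a disjoint union of triangle components, so $|V(H)|$ is a multiple of $3$, contradicting the hypothesis. If $V_4 = V(G)$, then every vertex of $G$ lies in $V(H)$ and has all of its $G$-edges present in $H$, so $H = G$, contradicting the assumption that $H$ is a proper subgraph. Both branches yield contradictions, and the lemma follows.

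The argument is short and the only point requiring some care is the closure step for $V_4$; once it is established, connectivity of $G$ together with the divisibility hypothesis does the rest. Nothing in the argument uses that $H$ is induced, so it works for arbitrary (spanning or not) proper subgraphs $H$.
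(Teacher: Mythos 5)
Your proof is correct, and it rests on the same two ingredients as the paper's: the divisibility hypothesis rules out $V(H)$ being a disjoint union of triangles, and the observation that a vertex of $H$-degree $4$ has all of its $G$-neighbors (and $G$-edges) inside $H$, so that by connectivity of the $4$-regular graph $G$ such vertices would force $H=G$. The paper phrases this component-locally (pick a component whose order is not divisible by $3$; it is not a triangle and, being part of a proper subgraph, must contain a vertex of degree at most $3$), whereas you run the same closure-plus-connectivity argument globally by contradiction; your write-up makes explicit the step the paper leaves implicit.
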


\begin{proof}
Let us consider a component $C$ of $H$ which has the number of vertices not
divisible by $3$. In particular $C$ is not a triangle.
Since $H$ is a proper subgraph of a connected $4$-regular
graph, $C$ must contain a vertex of degree at most $3$.
\end{proof}

In Lemma~\ref{l:I4} we have ruled out certain options for the neighborhood of a
vertex of degree $4$. Now, we may rule out further options.

\begin{lemma}
\label{l:2P2}
  Let $G$ be a minimal counterexample to Theorem~\ref{t:K4}. Then $G$ does not
  contain vertex $v$ such that $N(v)$ is isomorphic to $2P_2$ or $P_3 + I_1$; see
  Figure~\ref{f:neighborhoods4}.
\end{lemma}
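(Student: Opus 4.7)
The plan is to argue by minimality. Suppose, for contradiction, that the minimal counterexample $G$ contains a vertex $v$ with $N(v) \cong 2P_2$ or $N(v) \cong P_3 + I_1$. By Lemmas~\ref{l:connected}, \ref{l:min3}, and \ref{l:4regular}, $G$ is then a connected $4$-regular graph. I would choose the ordering $v_1, v_2, v_3, v_4$ of the neighbors of $v$ so that the tuple $(k_1, k_2, k_3, k_4)$ arising in Lemma~\ref{l:recurrent_simp} equals $(5, 6, 6, 7)$. In the $2P_2$ case, take the two edges of $N(v)$ to be $v_1 v_3$ and $v_2 v_4$ (alternating the visitation between the two edges); in the $P_3 + I_1$ case, take $v_1$ to be the isolated vertex, $v_3$ the middle of the path, and $v_2, v_4$ the two endpoints. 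A direct count of $|N[v_i] \cup \{v_1, \dots, v_{i-1}\}|$ in both cases confirms the tuple $(5, 6, 6, 7)$.

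The bare inequality from Lemma~\ref{l:recurrent_simp} is insufficient because $r_{5,6,6,7} > \Theta_2$, so I would exploit Lemma~\ref{l:no_triangle}: each $G^i$ is a proper induced subgraph of the connected $4$-regular graph $G$, so whenever $n - k_i \not\equiv 0 \pmod{3}$, the graph $G^i$ contains a vertex of degree at most $3$ not lying in a component consisting of a single triangle. The minimality of $G$ then yields the stronger inductive bound
\[
\bb(G^i) \leq (\Theta_2^{-4} + \Theta_2^{-5} + \Theta_2^{-6})\, \Theta_2^{n - k_i}.
\]

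Splitting on $n \pmod 3$: when $n \equiv 0 \pmod 3$, the positions with $k_i \in \{5, 7\}$ satisfy $n - k_i \not\equiv 0 \pmod 3$, so $G^1$ and $G^4$ receive the stronger bound; combining with the plain bound at $G^2, G^3$ yields $\bb(G) \leq r_{6,6,9,10,11,11,12,13}^{\,n} \leq \Theta_2^n$ using the corresponding entry of Table~\ref{tab:roots}. When $n \not\equiv 0 \pmod 3$, both positions with $k_i = 6$ satisfy $n - k_i \not\equiv 0 \pmod 3$, so $G^2$ and $G^3$ receive the stronger bound, and $\bb(G) \leq r_{5,7,10,10,11,11,12,12}^{\,n} \leq \Theta_2^n$. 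In all three residue classes we obtain $\bb(G) \leq \Theta_2^n$, contradicting the assumption that $G$ is a counterexample to Theorem~\ref{t:K4} (the stronger clause of the theorem is vacuously satisfied since $G$ is $4$-regular).

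The main technical obstacle is the tightness: the sum $\Theta_2^{-5} + 2\Theta_2^{-6} + \Theta_2^{-7}$ exceeds $1$ by only about $10^{-2}$, so the argument depends sensitively on extracting enough savings from the stronger inductive bound. This is precisely where $4$-regularity of $G$ is used, via Lemma~\ref{l:no_triangle}, to guarantee that $G^i$ is never a disjoint union of triangles whenever $|V(G^i)|$ is not divisible by $3$; the two table entries $r_{6,6,9,10,11,11,12,13}$ and $r_{5,7,10,10,11,11,12,12}$ are tailored exactly to the two possible patterns of which positions fail the divisibility condition.
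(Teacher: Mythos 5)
Your proposal is correct and follows essentially the same route as the paper's proof: label the neighbors so that $(k_1,\dots,k_4)=(5,6,6,7)$, invoke Lemma~\ref{l:no_triangle} together with minimality to upgrade the inductive bound on the $G^i$ whose vertex count is not divisible by $3$, and split on $n \bmod 3$ to land on the two table entries $r_{6,6,9,10,11,11,12,13}$ and $r_{5,7,10,10,11,11,12,12}$. No gaps.
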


\begin{proof}
  By Lemma~\ref{l:4regular}, we know that $G$ is $4$-regular. For
  contradiction, let us assume that there is a vertex $v$ such that $N(v)$ is isomorphic
  to $2P_2$ or $P_3 + I_1$. Let us label the neighbors of $v$ according to
  Figure~\ref{f:neighborhoods4}. In our usual notation, this gives $(k_1, k_2,
  k_3, k_4) = (5,6,6,7)$. This is insufficient to rule out these cases
  directly, but it will help us to focus on the stronger conclusion of
  Theorem~\ref{t:K4}. Lemma~\ref{l:recurrent_simp} gives
  $$
  \bb(G) \leq \bb(G^1) + \bb(G^2) + \bb(G^3) + \bb(G^4)
  $$
  where $G^i = G - N[v_i] - \{v_1,\dots, v_{i-1}\}$ as usual.
  Now, let us consider two cases depending on whether the number of vertices of
  $G$ is divisible by $3$. If it is divisible by $3$, Lemma~\ref{l:no_triangle},
  together with the fact that $G$ is a minimal counterexample, gives
  $$
  \bb(G) \leq \Theta_2^{n-5}(\Theta_2^{-4} + \Theta_2^{-5} + \Theta_2^{-6}) +
  \Theta_2^{n-6} + \Theta_2^{n-6} + \Theta_2^{n-7}(\Theta_2^{-4} +
  \Theta_2^{-5} + \Theta_2^{-6}).
  $$
  If the number of vertices of $G$ is not divisible by $3$, we analogously get
  $$
  \bb(G) \leq \Theta_2^{n-5} + 2\Theta_2^{n-6}(\Theta_2^{-4} + \Theta_2^{-5} + \Theta_2^{-6}) +
        \Theta_2^{n-7}.
 $$
 In both cases, we get the required contradiction, since
 $r_{6,6,9,10,11,11,12,13} \leq \Theta_2$ as well as $r_{5,7,10,10,11,11,12,12}
 \leq \Theta_2$. See Table~\ref{tab:roots}.
\end{proof}

Now, we may also rule out an open neighborhood isomorphic to $K_{1,3}$.

\begin{lemma}
\label{l:K13}
  Let $G$ be a minimal counterexample to Theorem~\ref{t:K4}. Then $G$ does not
  contain a vertex $v$ such that $N(v)$ is isomorphic to $K_{1,3}$; see
  Figure~\ref{f:neighborhoods4}.
\end{lemma}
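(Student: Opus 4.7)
The plan is to derive a contradiction by using the sharp structural constraints that previous lemmas impose on the neighborhoods of degree-$4$ vertices in a minimal counterexample. Suppose, for contradiction, that such a $v$ exists. By Lemma~\ref{l:4regular} the whole graph $G$ is $4$-regular, so in particular every $v_i \in N(v)$ has degree exactly $4$, and therefore Lemmas~\ref{l:I4} and~\ref{l:2P2} apply to each $N(v_i)$, forcing $N(v_i)$ to be isomorphic to one of $K_{1,3}$, $P_4$ or $C_4$.

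Label the vertices of $N(v) \cong K_{1,3}$ so that $v_1$ is the center and $v_2, v_3, v_4$ are the leaves. Then $v_1$ is adjacent in $G$ to $v, v_2, v_3, v_4$, which exhausts its four neighbors; similarly $N_G(v) = \{v_1, v_2, v_3, v_4\}$. I now look at a single leaf, say $v_2$, whose four neighbors in $G$ are $v$, $v_1$, and two further vertices $w_1, w_2$. Since the leaves of $N(v) \cong K_{1,3}$ are pairwise non-adjacent, $v_3$ and $v_4$ are not neighbors of $v_2$, so $w_1, w_2 \notin \{v, v_1, v_3, v_4\}$ and hence lie outside $\{v, v_1, v_2, v_3, v_4\}$.

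The key observation is that this completely determines the induced subgraph $N(v_2)$ on $\{v, v_1, w_1, w_2\}$: the edge $v v_1$ is present, but neither $v$ nor $v_1$ can be adjacent to $w_1$ or $w_2$, because each of $v$ and $v_1$ has already used up all four of its neighbors inside $\{v_1, v_2, v_3, v_4\}$ and $\{v, v_2, v_3, v_4\}$ respectively. Hence $N(v_2)$ consists of the edge $v v_1$ together with the pair $\{w_1, w_2\}$, which is either an edge or a non-edge. Thus $N(v_2) \cong 2P_2$ or $N(v_2) \cong P_2 + I_2$. Both isomorphism types are explicitly forbidden for a degree-$4$ vertex in a minimal counterexample by Lemmas~\ref{l:2P2} and~\ref{l:I4}, giving the desired contradiction.

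I do not anticipate a serious obstacle here; unlike the preceding cases, no refined bookkeeping of the $G^i$ is required, because the presence of two adjacent high-symmetry degree-$4$ vertices ($v$ and $v_1$, both with neighborhood contained in $\{v, v_1, v_2, v_3, v_4\}$) already rigidly determines the local structure around each leaf $v_i$ and forces a forbidden neighborhood. The only thing to double-check is that $w_1 \neq w_2$, which holds because $v_2$ has $4$ distinct neighbors, and that the argument really only uses $4$-regularity together with the two earlier exclusion lemmas, both of which apply since those lemmas were proved under exactly the standing hypothesis that $G$ is a minimal counterexample.
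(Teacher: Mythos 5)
Your proposal is correct and follows essentially the same route as the paper: the paper also uses $4$-regularity (Lemma~\ref{l:4regular}) to conclude that the only common neighbor of $v_2$ and $v_1$ is $v$ and the only common neighbor of $v_2$ and $v$ is $v_1$, hence $N(v_2)\cong 2P_2$ or $P_2+I_2$, which Lemmas~\ref{l:I4} and~\ref{l:2P2} forbid. Your phrasing via the exhausted neighborhoods of $v$ and $v_1$ is just a restatement of that same observation.
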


\begin{proof}
  For contradiction, there is such a vertex $v$. Let us label the neighbors of
  $v$ according to Figure~\ref{f:neighborhoods4}.
  By Lemma~\ref{l:4regular}, we know that $G$ is $4$-regular. Therefore, the
  only common neighbor of $v_2$ and $v_1$ is $v$. Similarly, the only common
  neighbor of $v_2$ and $v$ is $v_1$. Therefore $N(v_2)$ must be isomorphic to
  $2P_2$ or to $P_2 + I_2$. However this is already ruled out by
  Lemmas~\ref{l:I4}~and~\ref{l:2P2}.
\end{proof}

Now let us establish two graph classes that will help us to work with
$4$-regular graphs such that the open neighborhood of every vertex is isomorphic
either to the cycle $C_4$ or to the path on $4$ vertices $P_4$. The
\emph{triangular path} on $n$ vertices is the graph $TP_n$ such that $V(TP_n) :=
[n]$ and
$$E(TP_n) := \left\{ij \in \binom{[n]}2 \colon |i - j| \leq 2\right\}.$$
Similarly, we
define \emph{triangular cycle} so that we consider the distance cyclically.
That is, we get a graph $TC_n$ such that $V(TC_n) := V(TP_n) = [n]$
and
$$E(TC_n) := \left\{ij \in \binom{[n]}2 \colon i - j \pmod n \in \{n-2,n-1,1,2\}\right\}.$$
See Figure~\ref{f:TC}.

\begin{figure}
\begin{center}
\includegraphics{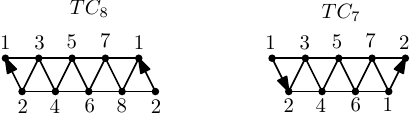}
\caption{The graphs $TC_8$ and $TC_7$ (after the identification of the vertices
labeled $1$ and $2$).}
\label{f:TC}
\end{center}
\end{figure}

If we consider the clique complex $\cl(TC_n)$, then we get a triangulation of
an annulus for $n \geq 8$ even, whereas we get a triangulation of the
M\"{o}bius band for $n \geq 7$ odd. We establish the following structural
result for graphs with the remaining two options for open neighborhoods.

\begin{lemma}
\label{l:C4P4}
Let $G$ be a connected $4$-regular graph such that the open neighborhood of every vertex
is isomorphic either to $C_4$ or to $P_4$.
Then $G$ is isomorphic to $TC_n$ for some $n \geq 6$.
\end{lemma}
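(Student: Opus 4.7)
Our plan is to classify the edges of $G$ by the number of triangles they lie in, and to use this together with the rigidity imposed by the $C_4/P_4$ neighborhood hypothesis. For any edge $uv$, the number of triangles through $uv$ equals $|N(u) \cap N(v)|$, which is the degree of $v$ as a vertex of the graph $N(u)$. In $C_4$ every vertex has degree $2$, whereas in $P_4$ the two interior vertices have degree $2$ and the two endpoints have degree $1$. Call $uv$ a \emph{short} edge if it lies in $2$ triangles and a \emph{long} edge if it lies in exactly $1$; then $uv$ is long iff both $N(u)$ and $N(v)$ are $P_4$ with $u$ and $v$ the respective endpoints, and every edge at a $C_4$-neighborhood vertex is short.

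\textbf{Case A: every $N(v) \cong P_4$.} Each vertex has exactly two short neighbors (the interior vertices of its $P_4$), so the short edges form a $2$-regular spanning subgraph that decomposes as a disjoint union of cycles. Pick a short cycle $v_0 v_1 \cdots v_{k-1}$ (indices mod $k$). The interior of $N(v_i)$ is $\{v_{i-1}, v_{i+1}\}$, which are adjacent in $P_4$, so $v_{i-1}v_{i+1}$ is an edge of $G$. For $k \geq 5$, $v_{i+1}$ is distinct from the interior $\{v_{i-2}, v_i\}$ of $N(v_{i-1})$ and is hence an endpoint of $P_4 \cong N(v_{i-1})$; the other endpoint is adjacent to interior $v_{i-2}$, hence a common neighbor of $v_{i-1}$ and $v_{i-2}$; the short edge $v_{i-1}v_{i-2}$ has exactly two common neighbors, $v_{i-3}$ and $v_i$, and since $v_i$ is interior of $N(v_{i-1})$ the endpoint in question must be $v_{i-3}$. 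Therefore $N(v_i) = \{v_{i-2}, v_{i-1}, v_{i+1}, v_{i+2}\}$, matching $TC_k$; every neighbor of a cycle vertex is on the cycle, so by connectedness $V(G)$ equals the vertex set of the cycle and $G \cong TC_k$. Small $k$ are ruled out: $k \in \{3,4\}$ collapses the two endpoints of $P_4 \cong N(v_{i-1})$ into a single vertex; $k = 5$ yields $G \cong K_5 \supseteq K_4$; and $k = 6$ forces an additional edge $v_{i-2}v_{i+2}$ by applying the same formula at another vertex, turning $N(v_i)$ into $C_4$ contrary to Case A. So $k \geq 7$.

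\textbf{Case B: some $N(v) \cong C_4$.} Label $N(v) = \{a, b, c, d\}$ with $C_4$ cycle $a$-$b$-$c$-$d$-$a$, and let $w$ be the fourth neighbor of $a$. In $N(a) = \{v, b, d, w\}$ the path $b$-$v$-$d$ is forced, so $w$ is adjacent to at least one of $b, d$. Suppose $N(a) \cong P_4$, say $wb \in E(G)$ and $wd \notin E(G)$; then $aw$ and $ad$ are long. Tracking the long edge $ad$ forces $N(d) \cong P_4$ with path $a$-$v$-$c$-$z$ for some $z$; this $z$ is adjacent to $c$ and $d$ but to none of $v, a, b$, so $z$ is the fourth neighbor of $c$. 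Analogously the fourth neighbor of $b$ is $w$, and $N(b) \cong P_4$ is $w$-$a$-$v$-$c$. Both $aw$ and $bw$ are long, so in $P_4 \cong N(w)$ both $a$ and $b$ must be endpoints; but endpoints of $P_4$ are non-adjacent while $ab \in E(G)$, a contradiction. Hence $N(a) \cong C_4$, and the same argument applied to $b, c, d$ shows their fourth neighbors all coincide with a single vertex $w$ satisfying $N(w) = \{a, b, c, d\} = N(v)$. The six vertices $\{v, w, a, b, c, d\}$ with the recorded edges form the octahedron $K_{2,2,2} \cong TC_6$, and connectedness identifies this graph with all of $G$.

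The main obstacle will be the $P_4$-subcase of Case B: one must propagate the forced $P_4$ structures once around the $C_4$ in order to spot the contradiction at the common fourth vertex $w$. Case A, by contrast, is largely bookkeeping once the short/long dichotomy is set up.
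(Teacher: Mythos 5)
Your approach is genuinely different from the paper's: the paper observes that $\cl(G)$ is a triangulated surface (possibly with boundary), computes its Euler characteristic from the count of $C_4$- versus $P_4$-vertices, and then classifies via the sphere/disc/annulus/M\"obius-band trichotomy, whereas you argue purely combinatorially through the short/long edge dichotomy. Your Case A is essentially sound for $k\geq 4$, but there is a real gap in Case B, in the subcase $N(a)\cong P_4$. After fixing the path $w$--$b$--$v$--$d$ in $N(a)$, you assert that $N(b)\cong P_4$ with path $w$--$a$--$v$--$c$ ``analogously'' to the treatment of $d$. The analogy fails: the edge $ad$ is long (both $a$ and $d$ are endpoints of the respective $P_4$'s), which is what forced the structure of $N(d)$, but the edge $ab$ is \emph{short} ($b$ is interior in $N(a)$), so nothing forces $N(b)$ to be a $P_4$. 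The only undetermined adjacency in $N(b)=\{v,a,c,w\}$ is $cw$, and $cw\in E(G)$ would make $N(b)\cong C_4$ -- perfectly consistent with the hypothesis. Worse, the established facts actually \emph{imply} $cw\in E(G)$: since $aw$ is long, $N(w)\cong P_4$ with $a$ an endpoint, so $b$ (being adjacent to $a$ inside $N(w)$) is interior in $N(w)$, hence $bw$ is short and the second common neighbor of $b$ and $w$ must be $c$. So your final step (``both $aw$ and $bw$ are long, hence $a$ and $b$ are two adjacent endpoints of $N(w)$'') rests on a false premise, and the contradiction does not go through as written.

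The subcase is nevertheless genuinely contradictory, and your other deductions already contain the ingredients for a correct finish: from $N(d)\cong P_4$ with path $a$--$v$--$c$--$z$ you get a fourth neighbor $z$ of $d$ adjacent to $c$, and from $N(w)$ as above you get $cw\in E(G)$. Since $c$ already has the three neighbors $v,b,d$ from $N(v)$, $4$-regularity forces $z=w$; but $z$ is adjacent to $d$ while $w$ and $d$ are the two (non-adjacent) endpoints of $N(a)\cong P_4$ -- a contradiction. With that repair Case B is complete. Separately, in Case A your dismissal of $k=3$ (``collapses the two endpoints into a single vertex'') is really the $k=4$ argument; for $k=3$ the identification $v_{i+1}=v_{i-2}$ breaks the derivation earlier, and you need a short separate argument (e.g., the second common neighbor $x$ of the short edge $v_0v_1$ is an endpoint of both $N(v_0)$ and $N(v_1)$, so $v_0x$ and $v_1x$ are both long, making $v_0,v_1$ two adjacent endpoints of $N(x)\cong P_4$, which is impossible). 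Also, for $k=5$ the clean reason is that $N(v_i)$ would be an induced $K_4$, contradicting the neighborhood hypothesis; the lemma as stated does not assume $K_4$-freeness, so ``$K_5\supseteq K_4$'' is not by itself a licensed contradiction.
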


\begin{proof}
Let us consider the clique complex $\cl(G)$. By the condition on the
neighborhoods, we get that $\cl(G)$ is a triangulated surface, possibly with
boundary.

Let $k$ be the number of vertices of $G$ such that their neighborhood is
isomorphic to $C_4$ and $\ell$ be the number of remaining vertices. Therefore,
by double counting, we get that $\cl(G)$ has $k + \ell$ vertices, $2(k+\ell)$
edges, and $\frac43k + \ell$ triangles. That is, the Euler characteristic
$\chi(\cl(G))$ equals $k+\ell - 2(k + \ell) + \frac43k + \ell = \frac13k$.
However; surfaces with nonnegative Euler characteristic are rare, which will
help us to rule out many options.

It is easy to check that $G$ has at least $6$ vertices because the closed
neighborhood of a single vertex has already $5$ vertices.

If $\ell = 0$, then $k \geq 6$ and therefore $\chi(\cl(G)) \geq 2$. This leaves
an only option that $\cl(G)$ is a sphere, $\chi(\cl(G)) = 2$ and $k = 6$.
Consequently (by checking how to extend a neighborhood of arbitrary vertex), we
get that $G = K_{2,2,2} = TC_6$.

If $\ell > 0$, then $\cl(G)$ must be a surface with boundary and the only
options are the disc (with Euler characteristic $1$), the annulus and the M\"{o}bius
band (the latter two have Euler characteristic $0$).

In the case of a disc, we get $k = 3$.
We say, that a vertex $v$ of $G$ is a
$C_4$-vertex, if $N(v)$ is isomorphic to $C_4$.
The interior vertices of the disc are precisely the $c_4$-vertices.
By a local check of the
neighborhoods, we see that every $C_4$ vertex is adjacent to at least two
$C_4$-vertices, and therefore, the $C_4$-vertices form a triangle in $G$.
We now count the edges according to the number of $C_4$-vertices they contain. Note that no edge of the disc connects two boundary vertices, as such edge $e$ would separate the disc into two regions, and in the region with no interior vertex there would be a boundary vertex, not in $e$, of degree at most $2$; a contradiction.
Thus, each boundary vertex has exactly two neighbors in the boundary and two in the interior. The number of boundary edges is clearly $l$. To summarize, the total number of edges is $3+l+2l$, but it is also $2(3+l)$, thus $l=3$. This is a contradiction as then the triangle on the $3$ boundary vertices is in $\cl(G)$, eliminating the boundary of the disc.


Finally, it remains to consider the case of the annulus and the M\"{o}bius
band. In this case, $k=3\chi(\cl(G)) = 0$; so all the vertices are on the boundary of $\cl(G)$.
Now a simple local inspection gives that $G$ is isomorphic to $TC_n$ for $n
\geq 7$. (We consider an arbitrary
vertex $v$ and its neighborhood $N(v)$, then we check the neighborhoods of the
vertices of $N(v)$ which locally determines the graph uniquely. We continue
this inspection, until we reach a vertex from `two directions'.)
\end{proof}

Now we need to bound $\bb(TC_n)$ for $n \geq 6$ in order to finish the case of
graphs of minimum degree $4$. First, we provide a bound for $\bb(TP_n)$ which
will be useful for bounding $\bb(TC_n)$.

\begin{lemma}
\label{l:TP}
For $n \neq 3$, we have $\bb(TP_n) \leq 2^{n/4}$. Furthermore $\bb(TP_3) = 2$.
\end{lemma}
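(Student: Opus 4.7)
The plan is to induct on $n$ using Lemma~\ref{l:recurrent_simp}, applied to the endpoint vertex $v = 1$ of $TP_n$, which has degree $2$ with neighbors $v_1 = 2$ and $v_2 = 3$. Because $N[2] = \{1,2,3,4\}$ and $N[3] \cup \{2\} = \{1,2,3,4,5\}$, the two residual subgraphs appearing in the lemma are the induced subgraphs on $\{5,\dots,n\}$ and $\{6,\dots,n\}$, which after relabeling are exactly $TP_{n-4}$ and $TP_{n-5}$. This yields the clean Fibonacci-style recurrence
\[\bb(TP_n) \;\leq\; \bb(TP_{n-4}) + \bb(TP_{n-5}) \qquad (n \geq 5).\]

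For the base cases I would simply inspect the independence complex of $TP_n$ directly; its faces are subsets of $[n]$ with pairwise differences at least $3$. One reads off $\bb(TP_0) = 1$, $\bb(TP_1) = 0$, $\bb(TP_2) = 1$, $\bb(TP_3) = 2$, and $\bb(TP_4) = 2$ (the last being four vertices together with the single edge $\{1,4\}$). All of these satisfy $2^{n/4}$ except $n=3$, where $TP_3 = K_3$ so the independence complex is three isolated points, proving the ``furthermore'' part of the statement.

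For the inductive step, the key numerical observation is $2^{-1} + 2^{-5/4} < 1$: whenever both $\bb(TP_{n-4}) \leq 2^{(n-4)/4}$ and $\bb(TP_{n-5}) \leq 2^{(n-5)/4}$ may be invoked, the recurrence yields $\bb(TP_n) \leq 2^{n/4}(2^{-1} + 2^{-5/4}) < 2^{n/4}$. This is automatic for $n \geq 9$, and also for $n \in \{5,6\}$, since in those ranges neither $n-4$ nor $n-5$ hits the exceptional index $3$. The only values where a separate check is needed are $n = 7$ and $n = 8$, which I would handle by plugging the true value $\bb(TP_3) = 2$ into the recurrence: $\bb(TP_7) \leq 2 + 1 = 3 < 2^{7/4}$ and $\bb(TP_8) \leq 2 + 2 = 4 = 2^{8/4}$ (the latter being sharp).

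The only real obstacle is preventing the exceptional case $n=3$ from contaminating the induction, since there the bound genuinely fails. Because the recurrence reduces $n$ by exactly $4$ or $5$, the only ``infected'' indices are $n=7$ and $n=8$, and both are easily dispatched by hand using $\bb(TP_3)=2$; everywhere else, the slack $2^{-1}+2^{-5/4}<1$ gives the bound with room to spare.
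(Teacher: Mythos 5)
Your proof is correct and follows essentially the same route as the paper: the identical recurrence $\bb(TP_n)\leq\bb(TP_{n-4})+\bb(TP_{n-5})$ obtained from Lemma~\ref{l:recurrent_simp} applied at an endpoint of the triangular path (you use vertex $1$, the paper uses vertex $n$ — the same by symmetry), the same base values $\bb(TP_0),\dots,\bb(TP_4)$, and the same special treatment of the indices $7$ and $8$ contaminated by the exceptional value $\bb(TP_3)=2$. The only difference is organizational: the paper tabulates $\bb(TP_5),\dots,\bb(TP_8)$ explicitly before inducting from $n\geq 9$, whereas you isolate the numerical slack $2^{-1}+2^{-5/4}<1$ up front; both are fine.
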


\begin{proof}
It is easy to determine the first few initial values by checking the corresponding
independence complexes. We obtain $\bb(TP_0) = 1$, $\bb(TP_1) = 0$, $\bb(TP_2) = 1$, $\bb(TP_3)
= 2$, and $\bb(TP_4) = 2$, where $TP_0$ stands for the empty graph.

Next, we use Lemma~\ref{l:recurrent_simp} to vertex $n$ and its neighbors $n-1$
and $n-2$. We get
\begin{equation}
\label{e:TP}
  \bb(TP_n) \leq \bb(TP_{n-4}) + \bb(TP_{n-5}).
\end{equation}
This further gives $\bb(TP_5) \leq 1$, $\bb(TP_6) \leq 1$, $\bb(TP_7) \leq 3$,
and $\bb(TP_8) \leq 4$. Therefore, $\bb(TP_n) \leq 2^{n/4}$ for $n \in [8]
\setminus \{3\}$.
(Note that $2^{7/4} \approx 3.3636$.) Furthermore, it is trivial to show that
$\bb(TP_n) \leq 2^{n/4}$ for $n \geq 9$ by induction using~\eqref{e:TP}.
\end{proof}

Now we bound $\bb(TC_n)$.

\begin{lemma}
\label{l:TC}
  For $n \geq 9$ we have $\bb(TC_n) \leq 2^{n/4}(2^{-1/2} + 2^{-1/4}) \approx
  1.5480 \cdot 2^{n/4}$.
\end{lemma}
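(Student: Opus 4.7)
The strategy is to pivot $TC_n$ twice via Lemma~\ref{l:single_vertex}, reducing everything to triangular paths and then invoking $\bb(TP_k)\leq 2^{k/4}$ from Lemma~\ref{l:TP}.

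First, I would pick the vertex $v=n$ of $TC_n$. Its closed neighborhood in $TC_n$ is $\{n-2,n-1,n,1,2\}$, so $TC_n - N[v]$ is the induced subgraph on the consecutive indices $\{3,4,\dots,n-3\}$, which is precisely $TP_{n-5}$. The subtle point is that $TC_n - v$ is \emph{not} isomorphic to $TP_{n-1}$: the indices $1$ and $n-1$ sit at cyclic distance $2$ in $TC_n$, so the edge $\{1,n-1\}$ remains after removing $v$. Write $H:=TC_n-v$; it is $TP_{n-1}$ together with the extra ``wrap-around'' edge $\{1,n-1\}$.

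To bound $\bb(H)$, the plan is to apply Lemma~\ref{l:single_vertex} a second time, this time to the vertex $1$ of $H$, whose closed neighborhood in $H$ is $\{1,2,3,n-1\}$. Deleting vertex $1$ destroys the extra edge, so $H-\{1\}\cong TP_{n-2}$; meanwhile $H-N_H[1]$ is the induced subgraph on the consecutive interval $\{4,5,\dots,n-2\}$, hence isomorphic to $TP_{n-5}$. Combining the two pivots yields
\[
\bb(TC_n)\leq \bb(H)+\bb(TP_{n-5})\leq \bb(TP_{n-2})+2\,\bb(TP_{n-5}).
\]

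For $n\geq 9$, both indices $n-2\geq 7$ and $n-5\geq 4$ avoid the exceptional value $3$, so Lemma~\ref{l:TP} gives $\bb(TP_{n-2})\leq 2^{(n-2)/4}$ and $\bb(TP_{n-5})\leq 2^{(n-5)/4}$. Substituting and using $2\cdot 2^{-5/4}=2^{-1/4}$, the right-hand side simplifies to $2^{n/4}\!\left(2^{-1/2}+2^{-1/4}\right)$, as claimed. I do not foresee any genuine obstacle; the one thing to be careful about is not mistaking $TC_n-v$ for $TP_{n-1}$, which is what motivates the second pivot and also explains the hypothesis $n\geq 9$ in the statement—the case $n=8$ would force us to invoke Lemma~\ref{l:TP} at the bad value $k=3$, where the bound $\bb(TP_3)=2>2^{3/4}$ fails.
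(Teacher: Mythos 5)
Your proof is correct and is essentially the paper's own argument: both apply Lemma~\ref{l:single_vertex} twice (the paper pivots at $n$ and then at $n-1$, you pivot at $n$ and then at $1$, which is the same up to the reflection symmetry of $TC_n$), arriving at the identical recursion $\bb(TC_n)\leq \bb(TP_{n-2})+2\bb(TP_{n-5})$ and then invoking Lemma~\ref{l:TP}. Your explicit handling of the wrap-around edge $\{1,n-1\}$ and the remark about why $n\geq 9$ is needed (avoiding $\bb(TP_3)=2$) are accurate and, if anything, slightly more careful than the paper's write-up.
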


\begin{proof}
First remove the vertex $n$ and then the vertex $n-1$ from $TC_n$.
  Lemma~\ref{l:single_vertex} then gives
\begin{align*}
 \bb(TC_n) &  \leq \bb(TC_n - n) + \bb(TC_n - N[n])\\
 &\leq \bb(TC_n - \{n,n-1\})+\bb(TC_n -n-N[n-1])+\bb(TC_n - N[n])\\
 & \leq \bb(TP_{n-2}) + 2\bb(TP_{n-5}).
  \end{align*}
  Therefore, Lemma~\ref{l:TP} gives
  $$
  \bb(TC_n) \leq 2^{(n-2)/4} + 2\cdot2^{(n-5)/4} = 2^{n/4}(2^{-1/2} +
  2^{-1/4}).
  $$
\end{proof}

Now we may rule out $4$-regular graphs.

\begin{lemma}
\label{l:not_4reg}
Let $G$ be a minimal counterexample to Theorem~\ref{t:K4}. Then $G$ is not a
$4$-regular graph.
\end{lemma}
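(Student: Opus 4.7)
The plan is to leverage all the structural lemmas already proved to force $G$ to belong to the very specific family of triangular cycles, and then invoke the Betti bound that is essentially already established for that family. First, since $G$ is assumed to be a $4$-regular minimal counterexample, the open neighborhood $N(v)$ of every vertex $v$ is a triangle-free graph on $4$ vertices (because $G$ is $K_4$-free), so $N(v)$ is one of the $7$ graphs in Figure~\ref{f:neighborhoods4}. Lemmas~\ref{l:I4}, \ref{l:2P2}, and~\ref{l:K13} rule out all of these isomorphism classes except $C_4$ and $P_4$. Since $G$ is connected by Lemma~\ref{l:connected}, Lemma~\ref{l:C4P4} applies and forces $G \cong TC_n$ for some $n \geq 6$.

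It then remains to verify $\bb(TC_n) \leq \Theta_2^n = 2^{n/3}$ for every $n \geq 6$. For the generic range $n \geq 9$, Lemma~\ref{l:TC} already gives $\bb(TC_n) \leq 2^{n/4}(2^{-1/2}+2^{-1/4})$, and a direct comparison shows this is at most $2^{n/3}$ as soon as $n \geq 9$: the inequality reduces to $2^{-1/2}+2^{-1/4} \leq 2^{n/12}$, where the left-hand side is approximately $1.548$ and the right-hand side is at least $2^{3/4} \approx 1.682$. For the three remaining small cases $n \in \{6,7,8\}$, I would compute $\bb(TC_n)$ directly from the (very small) independence complex. For instance, $TC_6 = K_{2,2,2}$ whose independence complex is the disjoint union of three edges, while for $n \in \{7,8\}$ the independence complex is a one-dimensional complex (since no independent set of size $3$ fits on $C_n$ with pairwise circular distance at least $3$) whose total Betti number is easily read off from its Euler characteristic. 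In each of the three small cases, $\bb(TC_n)$ comes in comfortably below $\Theta_2^n$, which contradicts that $G$ is a counterexample to Theorem~\ref{t:K4}.

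The only subtlety worth flagging is that we need to verify only the weaker bound of Theorem~\ref{t:K4}, not the stronger one, since a $4$-regular graph contains no vertex of degree at most $3$ at all, making the hypothesis of the improved bound vacuous. No serious obstacle remains: the hardest work (establishing the classification $G \cong TC_n$ and bounding $\bb(TC_n)$) has already been carried out in Lemmas~\ref{l:C4P4} and~\ref{l:TC}, and the small-$n$ check is a routine finite computation.
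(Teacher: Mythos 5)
Your proposal is correct and follows essentially the same route as the paper: the same lemmas (\ref{l:connected}, \ref{l:I4}, \ref{l:2P2}, \ref{l:K13}, \ref{l:C4P4}, \ref{l:TC}) are combined in the same order to reduce to $G\cong TC_n$, the same comparison $2^{-1/2}+2^{-1/4}\leq 2^{n/12}$ handles $n\geq 9$, and the small cases $n\in\{6,7,8\}$ are settled by direct inspection of the independence complexes exactly as in the paper (which records $\bb(TC_6)=2$, $\bb(TC_7)=1$, $\bb(TC_8)=5$). Your observation that only the weaker bound of Theorem~\ref{t:K4} needs checking, since a $4$-regular graph vacuously satisfies the hypothesis of the stronger one, is also correct.
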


\begin{proof}
  For contradiction, let us assume that there is such $G$. By
  Lemma~\ref{l:connected} we know that $G$ is connected.
  By Lemmas~\ref{l:I4},~\ref{l:2P2} and~\ref{l:K13} we know that the open
  neighborhood of every vertex in $G$ is isomorphic either to $C_4$ or to
  $P_4$. Lemma~\ref{l:C4P4} implies that $G$ is isomorphic to $TC_n$ for $n
  \geq 6$. Therefore, in order to obtain a contradiction, it is sufficient to
  show that $\bb(TC_n) \leq \Theta_2^n = 2^{n/3}$.

  We treat separately the cases $n \in \{6,7,8\}$. The independence complex of
  $TC_6$ consists of three edges and therefore $\bb(TC_6) = 2$. The
  independence complex of $TC_7$ is the cycle $C_7$ which gives $\bb(TC_7) =
  1$. Finally, the independence complex of $TC_8$ is a connected $3$-regular
  graph (triangle-free) with $8$ vertices, thus with $12$ edges. Therefore $\bb(TC_8) = 5$. In all
  three cases, we easily see that $\bb(TC_n) \leq 2^{n/3}$.

  Now we consider $n \geq 9$. Lemma~\ref{l:TC} gives
  $\bb(TC_n) \leq 2^{n/4}(2^{-1/2} + 2^{-1/4}).$
  Therefore, we need to check the inequality $2^{-1/2} + 2^{-1/4} \leq
  2^{n/12}$. This inequality holds for $n \geq 8$ since $2^{8/12} \approx
  1.5874$ while $2^{-1/2} + 2^{-1/4} \approx 1.5480$.

\end{proof}
Proposition~\ref{p:min4} and
Lemmas~\ref{l:max5},~\ref{l:4regular} and~\ref{l:not_4reg} together imply the following
corollary.

\begin{proposition}
\label{p:5reg}
Let $G$ be a minimal counterexample to Theorem~\ref{t:K4}. Then $G$ is a
$5$-regular graph. \qed
\end{proposition}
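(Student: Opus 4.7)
The statement is essentially a direct corollary of the lemmas accumulated through the appendix so far, so my plan is simply to combine them. First I would recall that for a minimal counterexample $G$, the minimum degree $\delta(G)$ is at least $4$ by Proposition~\ref{p:min4}, while the maximum degree $\Delta(G)$ is at most $5$ by Lemma~\ref{l:max5}. Hence every vertex of $G$ has degree either $4$ or $5$.

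Next I would rule out degree~$4$ vertices via a short two-step argument. Suppose for contradiction that some vertex of $G$ has degree $4$. Then Lemma~\ref{l:4regular} forces $G$ to be $4$-regular globally. But Lemma~\ref{l:not_4reg} precisely excludes that $G$ be $4$-regular. This contradiction shows that no vertex of $G$ has degree $4$.

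Combining the two paragraphs, every vertex of $G$ has degree exactly $5$, so $G$ is $5$-regular, as claimed. There is no real obstacle here: all the work has already been done in the preceding lemmas, and the proof of Proposition~\ref{p:5reg} itself is essentially a one-line synthesis with no additional case analysis or homological computation required.
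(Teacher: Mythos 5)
Your synthesis is correct and matches the paper exactly: the paper states Proposition~\ref{p:5reg} as an immediate consequence of Proposition~\ref{p:min4} and Lemmas~\ref{l:max5}, \ref{l:4regular} and~\ref{l:not_4reg}, which is precisely the chain of deductions you spell out.
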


\subsection{5-regular graphs}

It remains to rule out $5$-regular graphs. We use an analogous approach as in the
case of $4$-regular graphs. Given a minimal counterexample $G$, which is
$5$-regular by Proposition~\ref{p:5reg}, and a vertex
$v$ of $G$, we consider all possible isomorphism classes of $N(v)$. Those are
triangle free graphs on $5$ vertices. All triangle free graphs on $5$
vertices with at least $4$ edges are depicted on Figure~\ref{f:neighborhoods5}.
All other triangle-free graphs on $5$ vertices are subgraphs of $P_5$ or
$K_{1,4}$. (It is easy to check both claims from the well known list of graphs
on $5$ vertices.)

\begin{figure}
\begin{center}
\includegraphics{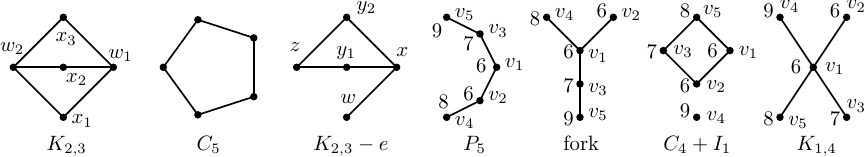}
\caption{Triangle-free graphs with $5$ vertices and at least $4$ edges.}
\label{f:neighborhoods5}
\end{center}
\end{figure}

We use the standard approach via Lemma~\ref{l:recurrent_simp} to rule out the
cases when $N(v)$ does not have many edges.

\begin{lemma}
\label{l:5edges}
Let $G$ be a minimal counterexample to Theorem~\ref{t:K4} and let $v$ be any
vertex of $G$. Then $N(v)$ contains at least $5$ edges.
\end{lemma}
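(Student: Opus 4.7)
\textbf{Proof plan for Lemma~\ref{l:5edges}.}
Assume for contradiction that $G$ is a minimal counterexample containing a vertex $v$ with $|E(N(v))| \le 4$. By Proposition~\ref{p:5reg}, $G$ is $5$-regular, so each neighbor $v_i$ of $v$ satisfies $|N[v_i]| = 6$. Fix any ordering $v_1,\dots,v_5$ of the five neighbors and apply Lemma~\ref{l:recurrent_simp}:
$$
\bb(G) \le \sum_{i=1}^{5} \bb(G^{i}),\qquad |V(G^i)| = n-k_i,
$$
where $k_i = 6 + (i-1) - d_i^{<}$ and $d_i^{<}$ is the number of edges of $N(v)$ connecting $v_i$ to some $v_j$ with $j<i$. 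Since $G$ is a minimal counterexample, $\bb(G^i) \le \Theta_2^{n-k_i}$, so by Lemma~\ref{l:roots} it suffices to choose the ordering so that $r_{k_1,\dots,k_5} \le \Theta_2 = 2^{1/3}$, i.e.\ $\sum_{i=1}^5 2^{-k_i/3} \le 1$.

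Summing over $i$, $\sum_i k_i = 30 + \binom{5}{2} - e = 40 - e$, so for $e \le 4$ there is average $k_i \ge 7.2$ worth of room; the task is to order the $v_i$'s so that no single $k_i$ is too small. The strategy is to list the triangle‑free graphs on $5$ vertices with at most $4$ edges and in each case exhibit an ordering. Such graphs split into (a) subgraphs of $P_5$ (the paths $I_5$, $K_2+3K_1$, $2K_2+K_1$, $P_3+2K_1$, $P_3+K_2$, $P_4+K_1$, $P_5$); (b) subgraphs of $K_{1,4}$ (the stars $K_{1,m}+(4-m)K_1$ for $m \le 4$); and (c) the graphs in Figure~\ref{f:neighborhoods5} with exactly four edges, namely $C_4+K_1$ and the ``T''-graph. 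A good heuristic is: process vertices of $N(v)$ which are isolated (in $N(v)$) first, then non‑adjacent pairs, and place high‑degree vertices last, so that their adjacencies in $N(v)$ are already accounted for by the $\{v_1,\dots,v_{i-1}\}$ term.

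For each case, a direct calculation with the chosen ordering gives a tuple $(k_1,\dots,k_5)$ and a sum $\sum 2^{-k_i/3}$ strictly less than $1$; for instance $(6,7,8,9,10)$ for $I_5$, $(6,7,7,8,9)$ for $P_4+K_1$, $(6,7,8,7,8)$ for $P_5$ and also for $C_4+K_1$ (processing first the isolated vertex and then the diagonal pair of the $C_4$), $(6,7,8,8,7)$ for the T-graph, and $(6,6,7,8,9)$ for $K_{1,4}$. All of these numerical sums lie below $0.99$, so $r_{k_1,\dots,k_5} < \Theta_2$, producing the required contradiction with the first bound of Theorem~\ref{t:K4}.

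The main obstacle will be the four-edge cases ($P_5$, $K_{1,4}$, $C_4+K_1$, T), where the slack is only a couple of percent; these are exactly the places where an ordering has to be chosen carefully rather than arbitrarily. Should any of them fail to satisfy the direct inequality, the fallback is to strengthen one summand using the second (refined) bound of Theorem~\ref{t:K4}: by $5$-regularity of $G$ every $G^i$ is a proper subgraph of a connected $5$-regular graph, hence contains vertices of degree at most $4$, and one can invoke a Lemma~\ref{l:no_triangle}-style argument to guarantee a vertex of degree $\le 3$ in a non-triangle component, gaining the factor $\Theta_2^{-4}+\Theta_2^{-5}+\Theta_2^{-6}\approx 0.9618$ on that summand; alternatively one can peel one further vertex off a particular $G^i$ via Lemma~\ref{l:single_vertex}. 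Either refinement comfortably covers the borderline cases, completing the contradiction and establishing the lemma.
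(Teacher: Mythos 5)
Your proposal is correct and follows essentially the same route as the paper: both apply Lemma~\ref{l:recurrent_simp} at $v$ with a suitably chosen ordering of the five neighbors, use $5$-regularity (Proposition~\ref{p:5reg}) to compute $k_i = 6+(i-1)-d_i^{<}$, and verify $\sum_i \Theta_2^{-k_i} \le 1$ for each triangle-free neighborhood with at most four edges. The paper simply fixes the labeling of Figure~\ref{f:neighborhoods5} to get $(k_1,\dots,k_5)\ge(6,6,7,8,9)$ in every case (permuting two coordinates for $C_4+I_1$) and cites $r_{6,6,7,8,9}<\Theta_2$ from Table~\ref{tab:roots}; your alternative orderings yield multisets such as $\{6,7,7,8,8\}$ whose sums are likewise below $1$, so none of your contemplated fallback refinements are needed.
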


\begin{proof}
  For contradiction, $G$ is a minimal counterexample and $v$ is a vertex of $G$
  such that $N(v)$ contains at most $4$ edges. We know that $G$ is $5$-regular.
  Therefore, $N(v)$ is one of the four graphs with $4$ edges on
  Figure~\ref{f:neighborhoods5}, or their subgraph. Let us label the vertices
  of $N(v)$ according to Figure~\ref{f:neighborhoods5} (we fix one choice of a
  subgraph if $G$ has less than $4$ edges). We use Lemma~\ref{l:recurrent_simp}
  to $v$. In our standard notation, we get $(k_1, \dots, k_5) \geq
  (6,6,7,8,9)$ (if $G = C_4 + I_1$, we have to permute last two coordinates).
  Therefore, $G$ cannot be a counterexample since $r_{6,6,7,8,9} < \Theta_2$;
  see Table~\ref{tab:roots}.
\end{proof}

Therefore, it remains to consider the connected $5$-regular graphs such that the open
neighborhood of every vertex is isomorphic to $K_{2,3}$, $C_5$ or $K_{2,3} -
e$; see Figure~\ref{f:neighborhoods5}. Fortunately, such graphs are very rare.
In fact, we will show that there are only two such graphs. One of them is the
graph of the icosahedron, which we denote by $G_{\ico}$. The second graph is the
join (as a graph, not as a simplicial complex) of $C_5$ and $I_3$ which we denote by
$C_5 \star I_3$. That is, $C_5 \star I_3$ is the graph with the vertex set
$V(C_5 \star I_3) = V(C_5) \cup V(I_3)$, assuming that $V(C_5)$ and $V(I_3)$
are disjoint, and with the set of edges
$$
E(C_5 \star I_3) = E(C_5) \cup \{uv\colon u \in C_5, v \in I_3\}.
$$

\begin{lemma}
\label{l:deg5_cases}
Let $G$ be a $5$-regular graph such that the open
neighborhood of every vertex is isomorphic to $K_{2,3}$, $C_5$ or $K_{2,3} -
e$. Then $G$ is isomorphic to $G_{\ico}$ or to $C_5 \star I_3$.
\end{lemma}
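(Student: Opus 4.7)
The plan is a case analysis on which isomorphism types among $C_5$, $K_{2,3}$, and $K_{2,3}-e$ actually occur as $N(v)$ for vertices $v$ of $G$. All three allowed neighborhoods are triangle-free, so $G$ is $K_4$-free and $\cl(G)$ is $2$-dimensional.

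Suppose first that $N(v) \cong C_5$ for every $v$. Then $\cl(G)$ is a closed triangulated surface, as every vertex-link is a cycle. Double counting gives $|V|=n$, $|E|=5n/2$, and $|F|=5n/3$, so $\chi(\cl(G))=n/6$. Since $\chi \le 2$ for any closed surface, $n \le 12$. The case $n=6$ forces $G = K_6$ whose neighborhoods are $K_5$, a contradiction; hence $n = 12$, $\cl(G)$ triangulates $S^2$, and the unique $5$-regular triangulation of $S^2$ is the icosahedron, so $G \cong G_{\ico}$.

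Suppose next that some vertex $v$ has $N(v) \cong K_{2,3}$, with bipartition $\{a_1,a_2\} \sqcup \{b_1,b_2,b_3\}$. Each $a_i$ is adjacent to $v$ and to all three $b_j$ but not to $a_{3-i}$, so the fifth neighbor of $a_i$ is some new vertex $c_i$; moreover $v$ has degree $3$ in $N(a_i)$, ruling out $N(a_i)\cong C_5$. Similarly $N(b_j) \supseteq \{v,a_1,a_2\}$ with $v$ of degree exactly $2$ there, since the remaining $G$-neighbors of $v$ are the other $b_k$, which are non-adjacent to $b_j$. Enumerating the admissible isomorphism types against this partial data, and requiring that the pair $\{c_1,c_2\}$ account for the two remaining neighbors of every $b_j$ uniformly in $j$ (and that every vertex have degree exactly $5$), the only edge pattern that survives is $c_1 \sim c_2$, $c_i \sim b_j$ for all $i,j$, with no further new edges. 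Indeed $c_i \sim a_{3-i}$ would force $a_i$ to have degree $6$, and the $K_{2,3}-e$ options for the $N(b_j)$ cannot be matched consistently across $j=1,2,3$. The resulting $8$-vertex graph has all vertices of degree $5$, so $G$ closes and is identified as $C_5 \star I_3$ with $5$-cycle $v\,a_1\,c_1\,c_2\,a_2$ and independent triple $\{b_1,b_2,b_3\}$.

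Finally, the remaining case---some $N(v) \cong K_{2,3}-e$ but no $N(v) \cong K_{2,3}$---I would rule out by contradiction. Labelling $N(v) \cong K_{2,3}-e$ with bipartition $\{X,Y\} \sqcup \{P,Q,R\}$ and missing edge $YR$, the vertex $v$ has degree $3$ in $N(X)$ (adjacent to $P,Q,R$), so $N(X)$ cannot be $C_5$, and by the case assumption cannot be $K_{2,3}$; hence $N(X) \cong K_{2,3}-e$. Propagating this information iteratively to $N(Y)$, $N(R)$ and to the neighborhoods of newly-discovered vertices eventually exhibits a vertex whose neighborhood contains three mutually non-adjacent vertices each adjacent to two common vertices---that is, a $K_{2,3}$---contradicting the case hypothesis. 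I expect this last case to be the main obstacle: the asymmetric roles in $K_{2,3}-e$ spawn several role-assignment sub-configurations to track whenever a new neighborhood is identified as $K_{2,3}-e$, and carefully closing each such sub-configuration to produce a $K_{2,3}$ elsewhere is the most delicate part of the argument. The first case is immediate from surface theory, and the second case, while more intricate, benefits from the symmetry of $K_{2,3}$ and closes after a single layer of local extension.
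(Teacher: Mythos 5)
Your overall case split is sound and your first case (all neighborhoods $C_5$, hence a closed surface with $\chi=n/6$, hence the icosahedron) matches the paper. But the other two cases are not actually proved. In case (c) you explicitly leave the argument as a plan ("propagating this information iteratively \dots eventually exhibits \dots a $K_{2,3}$"), and in case (b) the decisive step is an enumeration that you assert but do not carry out ("the only edge pattern that survives is \dots"; "the $K_{2,3}-e$ options for the $N(b_j)$ cannot be matched consistently"). That enumeration is genuinely nontrivial in your setup, because you have not yet excluded $K_{2,3}-e$ as a neighborhood type when you treat the $K_{2,3}$ case: e.g.\ $N(a_1)=\{v,b_1,b_2,b_3,c_1\}$ could a priori be $K_{2,3}-e$ with $c_1$ adjacent to only two of the $b_j$, and you also tacitly assume $c_1\neq c_2$, which itself requires a (short) argument. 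None of these sub-cases is addressed.

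The paper's key organizational move, which you are missing, is to eliminate $K_{2,3}-e$ \emph{first}, for every vertex, by a two-step local degree argument: if $N(v)\cong K_{2,3}-e$, its degree-one vertex $w$ (your $R$) satisfies $\deg_{N(w)}v=1$, which already forces $N(w)\cong K_{2,3}-e$ with $v$ as the degree-one vertex and hence with the degree-three vertex of $N(v)$ (your $X$) having three neighbors inside $N(w)$; counting $X$'s neighbors then exceeds five. (With your labels the same contradiction falls out of $N(R)$ in one step: $v$ has degree one in $N(R)$, so $X$ would need three common neighbors with $R$, but $P,Q$ are not adjacent to $R$.) Once $K_{2,3}-e$ is gone, your case (c) disappears and case (b) becomes a clean dichotomy: $\deg_{N(a_i)}v=3$ forces $N(a_i)\cong K_{2,3}$, then two adjacent degree-two vertices in $N(b_j)$ force $N(b_j)\cong C_5$, and the identification with $C_5\star I_3$ follows by showing $N(b_1)=N(b_2)=N(b_3)$ (using connectedness, which holds for the minimal counterexample). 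I recommend restructuring along these lines; as written, the proposal is a correct outline with the hardest two-thirds left unverified.
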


\begin{proof}
  Let us first consider the case that $G$ contains a vertex $v$ such that
  $N(v)$ is isomorphic to $K_{2,3} - e$. Let us label the vertices of $N(v)$
  according to Figure~\ref{f:neighborhoods5}.
  Now, let us focus on $N(w)$. It contains $x$ and $v$, which are neighbors.
  Moreover, $\deg_{N(w)}v = 1$ since $v$ has degree $5$ in $G$ and $v$
  is also incident to $y_1$, $y_2$ and $z$ which are not incident with $w$.
  Therefore $N(w)$ must be isomorphic to $K_{2,3} - e$ as well, which implies
  that $\deg_{N(w)} x = 3$. But this is a contradiction, since $x$ has too many
  neighbors, namely $v, w, y_1, y_2$ and two other neighbors which are incident
  to $w$. Altogether, $G$ cannot contain a vertex such that its open
  neighborhood is isomorphic to $K_{2,3} - e$.

  Now, let us consider the case that $G$ contains a vertex $v$ such that $N(v)$
  is isomorphic to $K_{2,3}$. Let us label the vertices of $N(v)$
    according to Figure~\ref{f:neighborhoods5}. Now let us focus on $N(w_1)$.
    It contains a subgraph formed by the vertices $v$, $x_1$, $x_2$ and $x_3$
    isomorphic to $K_{1,3}$. Therefore $N(w_1)$ cannot be isomorphic to $C_5$,
     so it must be isomorphic to $K_{2,3}$. Now we focus on $N(x_1)$.
    By checking $N(v)$, we get that $\deg_{N(x_1)}v = 2$. By an analogous
    argument, $\deg_{N(x_1)} w_1 = 2$ since we already know that $N(w_1)$ is
    isomorphic to $K_{2,3}$, and $w_1v$ is an edge in $N(x_1)$. Therefore $N(x_1)$ cannot be isomorphic to
    $K_{2,3}$ which implies that it is isomorphic to $C_5$. Analogously, we
    deduce that $N(x_2)$ and $N(x_3)$ are isomorphic to $C_5$.

As $G$ is a connected $5$-regular graph,
  in order to show
  that $G$ is isomorphic to $C_5 \star I_3$, it is sufficient to show that
  $N(x_1) = N(x_2) = N(x_3)$. We will show $N(x_1) = N(x_2)$ and the other
  equality $N(x_1) = N(x_2)$ will be analogous. Let us again focus on $N(v)$.
  The two edges $vw_1$ and $vw_2$ belong simultaneously to $N(x_1)$ and
  $N(x_2)$. Now, if we refocus to $N(w_1)$, we see that the two edges of
  $N(x_1)$ incident with $w_1$ belong also to $N(x_2)$, as $w_1$ has degree $5$ in $G$.
  By repeating
  this argument for $w_2$, we get that $N(x_1)$ and $N(x_2)$ share a $4$-path on $5$ vertices. As both $N(x_1)$ and
  $N(x_2)$ are isomorphic to $C_5$ we conclude
  $N(x_1) = N(x_2)$.

  Finally, it remains to consider the case that the
  open neighborhood of every vertex of $G$ is isomorphic to $C_5$. In this case, the clique complex
  $\cl(G)$ is a closed triangulated surface without boundary. Let $n$ be the
  number of vertices of $G$. By double-counting, $\cl(G)$ contains $\frac 52 n$
  edges and $\frac 53 n$ triangles. Therefore, the Euler characteristic
  $\chi(\cl(G))$ equals $n - \frac52n + \frac53n = \frac n6$.
  In particular,  $\chi(\cl(G))$ is positive; therefore $\cl(G)$ must be the
  sphere or the projective plane. The case of projective plane cannot occur,
  because in such case, we would have $n = 6\chi(\cl(G)) = 6$, forcing $G=K_6$ in the unique $6$-vertex triangulation of the projective plane; but then $\cl(G)$ is the $5$-simplex, a contradiction.
  Hence we know that $\cl(G)$ is the sphere and $n =
  6\chi(\cl(G)) = 12$. However, it is well known that the only $5$-regular
  graph that triangulates the sphere is the graph of the icosahedron.
\end{proof}

It remains to rule out the two cases from the previous lemma as minimal
counterexamples.

\begin{lemma}
\label{l:not_ico}
  Neither $C_5 \star I_3$ nor $G_{\ico}$ is a counterexample to
  Theorem~\ref{t:K4}.
\end{lemma}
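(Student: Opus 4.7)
Both $C_5 \star I_3$ and $G_{\ico}$ are $5$-regular, so no vertex has degree at most $3$ and the refined part of Theorem~\ref{t:K4} is not needed; the plan is simply to verify the main bound $\bb(G) \leq 2^{n/3}$ in each case, namely $\bb \leq 2^{8/3} \approx 6.35$ for $C_5 \star I_3$ and $\bb \leq 2^{12/3} = 16$ for $G_{\ico}$.

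For $C_5 \star I_3$, the key observation I will use is that a graph join forces the independence complex to split topologically as a disjoint union: every edge crosses between the $C_5$ and the $I_3$ sides, so any independent set lies entirely in one of them. Hence $\mathrm{Ind}(C_5 \star I_3)$ is the topological disjoint union of $\mathrm{Ind}(C_5)$ (a $5$-cycle, so $\bb(C_5)=1$) and $\mathrm{Ind}(I_3)$ (the full $2$-simplex, contractible, so $\bb(I_3)=0$). The elementary identity $\bb(X \sqcup Y) = \bb(X) + \bb(Y) + 1$ for reduced Betti sums of nonempty disjoint unions then gives $\bb(C_5 \star I_3) = 1 + 0 + 1 = 2$, well below the target.

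For $G_{\ico}$ the plan is to apply Lemma~\ref{l:single_vertex} at any vertex $v$ and use icosahedral structure to recognise the pieces. Let $v'$ be the antipode of $v$; the induced subgraph $G_{\ico} - N[v]$ consists of $v'$ together with its five neighbours $N(v')$, which form a $C_5$ with $v'$ adjacent to all of them, so it is precisely the wheel $C_5 \star \{v'\}$. The same join argument then yields $\bb(G_{\ico} - N[v]) = \bb(C_5) + \bb(\{v'\}) + 1 = 2$. On the other hand, $G_{\ico} - v$ has $11$ vertices and is still $K_4$-free, so the minimality of $G_{\ico}$ as a counterexample lets us apply Theorem~\ref{t:K4} to it: $\bb(G_{\ico} - v) \leq 2^{11/3} < 12.71$, and hence $\bb(G_{\ico} - v) \leq 12$ by integrality of $\bb$. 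Lemma~\ref{l:single_vertex} then yields
$$
\bb(G_{\ico}) \leq \bb(G_{\ico} - v) + \bb(G_{\ico} - N[v]) \leq 12 + 2 = 14 < 16,
$$
contradicting the assumption that $G_{\ico}$ is a counterexample.

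Essentially all of the real work has already been done in the preceding structural lemmas that isolate these two graphs as the only candidates; here the only mild subtlety is that $2^{11/3}$ is just barely less than $13$, so integrality of $\bb(G_{\ico} - v)$ is what closes the gap in the icosahedron case.
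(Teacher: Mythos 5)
Your proof is correct, and for the icosahedron it takes a genuinely different (and lighter) route than the paper. The $C_5 \star I_3$ half coincides with the paper's: the independence complex of a graph join is the disjoint union of the independence complexes, giving $\bb(C_5\star I_3)=2$. For $G_{\ico}$, the paper determines the homotopy type of the independence complex exactly --- it identifies the subcomplex of faces in triangles as a copy of the icosahedron boundary and concludes the complex is a wedge of one $2$-sphere and six $1$-spheres, so $\bb(G_{\ico})=7$. You instead apply Lemma~\ref{l:single_vertex} at a vertex $v$, recognize $G_{\ico}-N[v]$ as the wheel $W_5$ (whose independence complex is $\mathrm{Ind}(C_5)\sqcup\{v'\}$, so $\bb=2$), and bound $\bb(G_{\ico}-v)\leq\lfloor 2^{11/3}\rfloor=12$ using minimality plus integrality, getting $\bb(G_{\ico})\leq 14<16$. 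This is shorter and avoids the topological identification, at the cost of proving only that $G_{\ico}$ is not a \emph{minimal} counterexample (you invoke Theorem~\ref{t:K4} for the $11$-vertex graph $G_{\ico}-v$), whereas the paper's exact computation establishes the lemma as literally stated, independent of minimality. Since the lemma is deployed in the final proof precisely against a minimal counterexample, your weaker conclusion suffices for the paper's logic; it just means the lemma statement would need to be read with the standing minimal-counterexample hypothesis, as in the other lemmas of the appendix.
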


\begin{figure}
\begin{center}
\includegraphics{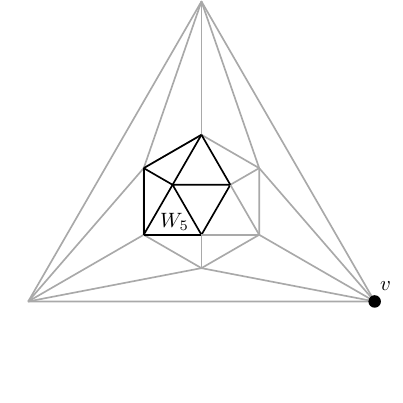}
\caption{A vertex $v$ of the icosahedron and the graph formed by the
non-neighbors of $v$.}
\label{f:icosahedron}
\end{center}
\end{figure}

\begin{proof}
  It is easy to compute that $\bb(C_5 \star I_3) = 2$ since the corresponding
  independence complex is the union of a $5$-cycle and a triangle. Since $2 <
  2^{8/3} = \Theta_2^8$, we get that $C_5 \star I_3$ is not a counterexample to
  Theorem~\ref{t:K4}.

It is a bit harder to determine $\bb(G_{\ico})$ precisely since the
corresponding independence complex is $2$-dimensional. Let $K$ be the
independence complex of $G_{\ico}$. Let $v$ be an arbitrary vertex of $G_{\ico}$.
Then $v$ is not incident to a subgraph of $G_{\ico}$ forming the wheel graph
$W_5$; see Figure~\ref{f:icosahedron}. This gives that the link $\lk_{K}
v$ is the independence complex of $W_5$, that is, the disjoint union $C_5 + I_1$
of $C_5$ and a vertex. Let $K'$ be the complex obtained from $K$ by
removing all edges which are not incident to any triangle. This means removing
$6$ edges. Then the link of every vertex of $K'$ is isomorphic to $C_5$.
Now, the same reasoning as in the last part of the proof of
Lemma~\ref{l:deg5_cases} gives that $K'$ is the boundary of the
icosahedron (but the vertices are significantly permuted when compared to the
icosahedron for $G_{\ico})$. This implies that
$K$ is homotopy equivalent to the wedge
of one $2$-sphere and six $1$-spheres. We obtain $\bb(G_{\ico}) = 7$. Given that
$7 < 8 < 2^{12/3} = \Theta_2^{12}$, we get that $G_{\ico}$ is not a
counterexample to Theorem~\ref{t:K4}.
\end{proof}

Now we conclude everything and obtain the final result.

\begin{proof}[Proof of Theorem~\ref{t:K4}]
  For contradiction, there is a minimal counterexample $G$ to
  Theorem~\ref{t:K4}. By Proposition~\ref{p:5reg}, $G$ is $5$-regular. By
  Lemmas~\ref{l:5edges} and~\ref{l:deg5_cases}, $G$ must be isomorphic to
  $G_{\ico}$ or to $C_5 \star I_3$. However, Lemma~\ref{l:not_ico} excludes
  these two options.
\end{proof}

\end{document}